\definecolor{verydarkblue}{rgb}{0,0,0.5}
\theoremstyle{plain}
\newtheorem{introtheorem}{Theorem}
\crefname{introtheorem}{Theorem}{Theorems}
\crefname{introproposition}{Proposition}{Propositions}
\newtheorem{introcorollary}[introtheorem]{Corollary}
\crefname{introcorollary}{Corollary}{Corollaries}
\crefname{introconjecture}{Conjecture}{Conjectures}
\newtheorem{theorem}{Theorem}[section]
\newtheorem{proposition}[theorem]{Proposition}
\newtheorem{lemma}[theorem]{Lemma}
\newtheorem{corollary}[theorem]{Corollary}
\newtheorem{conjecture}[theorem]{Conjecture}
\theoremstyle{definition}
\newtheorem{definition}[theorem]{Definition}
\theoremstyle{remark}
\newtheorem{remark}[theorem]{Remark}
\newtheorem{example}[theorem]{Example}
\numberwithin{figure}{section}
\numberwithin{equation}{section}
\def\Z{{\mathbb Z}}
\def\Q{{\mathbb Q}}
\def\C{{\mathbb C}}
\def\P{{\mathbb P}}
\def\cA{\mathcal{A}}
\def\cC{\mathcal{C}}
\def\cE{\mathcal{E}}
\def\cF{\mathcal{F}}
\def\cI{\mathcal{I}}
\def\cL{\mathcal{L}}
\def\cM{\mathcal{M}}
\def\cN{\mathcal{N}}
\def\cU{\mathcal{U}}
\def\cV{\mathcal{V}}
\def\O{\mathcal{O}}
\def\p{\pi}
\def\r{\rho}
\def\s{\sigma}
\def\.{\cdot}
\def\^{\widehat}
\def\~{\widetilde}
\def\ov{\overline}
\def\({\left(}
\def\){\right)}
\def\*{{}^*}
\renewcommand{\and}{ \ \ \text{ and } \ \ }
\def\an{\mathrm{an}}
\DeclareMathOperator{\Spec} {Spec}
\DeclareMathOperator{\BroadSpec}{\bold{Spec}}
\DeclareMathOperator{\Pic} {Pic}
\DeclareMathOperator{\NS} {NS}
\DeclareMathOperator{\Sym} {Sym}
\DeclareMathOperator{\Supp} {Supp}
\DeclareMathOperator{\Hom} {Hom}
\DeclareMathOperator{\cd} {cd}
\DeclareMathOperator{\Lef} {Lef}
\DeclareMathOperator{\Leff} {Leff}
\DeclareMathOperator{\Alb}{Alb}
\DeclareMathOperator{\alb}{alb}
\DeclareMathOperator{\Ker}{Ker}
\DeclareMathOperator{\Coker}{Coker}
\DeclareMathOperator{\sheafhom}{\mathscr{H}\text{\kern -3pt {\calligra\large om}}\,}
\DeclareMathOperator{\PicS}{\mathcal{P}\!\!\;\textit{ic}}
\DeclareMathOperator{\PicV}{\mathcal{P}\!\!\;\textit{ic}^0}
\DeclareMathOperator{\charK}{char}
\begin{document}

\title{Grothendieck--Lefschetz for ample subvarieties}

\author{Tommaso de Fernex}

\address{Department of Mathematics, University of Utah, Salt Lake City, UT 48112, USA}

\email{defernex@math.utah.edu}

\author{Chung Ching Lau}

\email{malccad@gmail.com}

\subjclass[2020]{Primary 14C22; Secondary 14K12, 14F17.}
\keywords{Ample subvariety, Picard group, abelian variety}

\thanks{%
The research of the first author was partially supported by NSF Grant DMS-1700769
and by NSF Grant DMS-1440140 while in residence at  
MSRI in Berkeley during the Spring 2019 semester.
The research of the second author was partially supported by a Croucher Foundation Fellowship. 
}

\begin{abstract}
We establish a Grothendieck--Lefschetz theorem for smooth ample subvarieties
of smooth projective varieties over an algebraically closed field of characteristic zero and, more
generally, for smooth subvarieties whose complement has small cohomological dimension. 
A weaker statement is also proved in a more general context and in all characteristics. 
Several applications are included. 
\end{abstract}

\maketitle

\section{Introduction}

The notion of ample subscheme, which finds its roots in the work of Hartshorne \cite{Har70},
was recently formalized by Ottem in \cite{Ott12} where
a closed subscheme $Y$ of codimension $r$ of a projective variety $X$ of characteristic zero 
is said to be \emph{ample} if the exceptional divisor of the blow-up of $X$ along $Y$ is
$(r-1)$-ample in the sense of \cite{Tot13}. 
If $r = 1$ and $X$ is normal, this is equivalent to $Y$ being an effective ample Cartier divisor.
In higher codimensions, examples are given by subschemes defined by the vanishing
of regular sections of ample vector bundles. The assumption on the characteristic is relevant, 
and in fact it remains unclear at the moment what should be the correct definition of ample subscheme 
in positive characteristics. 

Several properties of ample subschemes are established in \cite{Ott12}.
The connection between Ottem's definition of ampleness
and the treatment in \cite{Har70} is manifested in the property stating 
that a locally complete intersection subscheme $Y$ 
of codimension $r$ of a smooth variety $X$ is ample if and only if the normal bundle $\cN_{Y/X}$ is ample
and the complement has cohomological dimension $\cd(X \setminus Y) = r-1$. 
Here, cohomological dimension is intended with respect to coherent cohomology.
Ottem deduces from this property a version 
of the Lefschetz hyperplane theorem which states that if $X$ is a smooth complex projective variety 
and $Y$ is an ample locally complete intersection subscheme, then 
$H^i(X,\Q) \to H^i(Y,\Q)$ is an isomorphism for $i < \dim Y$ and 
injective for $i = \dim Y$. It is important to remark that, 
differently from the case of ample divisors or, more generally, 
of schemes defined by 
regular sections of ample vector bundles, the Lefschetz hyperplane theorem
with integral coefficients can fail for ample subvarieties, even assuming that $Y$ is smooth.

In this paper, we establish a version of the
Grothendieck--Lefschetz theorem for ample subvarieties
and, more generally, for subvarieties whose complements
have small cohomological dimension. 
Our main result is the following.

\begin{introtheorem}[cf.\ \cref{th:Lefschetz}]
\label{th:Pic-intro}
Let $X$ be a smooth $n$-dimensional 
projective variety defined over an algebraically closed field of characteristic zero, 
and let $Y \subset X$ be a smooth subvariety.
\begin{enumerate}
\item
\label{item1:Pic-intro}
If $\cd(X \setminus Y) \le n - 3$, then the restriction map $\Pic(X) \to \Pic(Y)$ is injective
and induces an isomorphism $\Pic^0(X) \cong \Pic^0(Y)$.
\item
\label{item2:Pic-intro}
If $\cd(X \setminus Y) < n - 3$, then the map $\Pic(X) \to \Pic(Y)$ has finite cokernel.
\end{enumerate}
\end{introtheorem}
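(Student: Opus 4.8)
The plan is to factor the restriction map through the formal completion $\widehat X$ of $X$ along $Y$ and to treat the two resulting comparisons separately, feeding the hypothesis on $\cd(X\setminus Y)$ into both. Write $\mathcal I\subset\mathcal O_X$ for the ideal sheaf of $Y$, let $X_m$ be the $m$-th infinitesimal neighborhood of $Y$ (so $X_0=Y$ and $\mathcal O_{X_m}=\mathcal O_X/\mathcal I^{m+1}$), put $\mathcal N=\mathcal N_{Y/X}$, $r=\operatorname{codim}_X Y$, and $c=\cd(X\setminus Y)$; smoothness of $X,Y$ gives $\mathcal I^m/\mathcal I^{m+1}\cong\Sym^m\mathcal N^\vee$, and $\Pic(\widehat X)=\varprojlim_m\Pic(X_m)$ up to a $\varprojlim^1$ of unit groups. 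The codimension-one case reduces to the classical Grothendieck--Lefschetz theorem and its refinement in dimension three, so I may assume $r\ge2$, whence also $\Pic(X)\cong\Pic(X\setminus Y)$.

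Step 1 (from $Y$ up to $\widehat X$). I would run the deformation theory of the tower $\{X_m\}$: the square-zero extensions $0\to\Sym^m\mathcal N^\vee\to\mathcal O_{X_m}^{*}\to\mathcal O_{X_{m-1}}^{*}\to0$ give, for $m\ge1$, exact sequences
\[
H^1(Y,\Sym^m\mathcal N^\vee)\to\Pic(X_m)\to\Pic(X_{m-1})\xrightarrow{\ \partial\ }H^2(Y,\Sym^m\mathcal N^\vee)
\]
and analogous ones for global units. The heart of this step is the vanishing
\[
H^i(Y,\Sym^m\mathcal N^\vee)=0\qquad\text{for}\ \ i\le n-c-2\ \ \text{and}\ \ m\ge1,
\]
which for $c\le n-3$ covers $i\le1$ (making $\Pic(\widehat X)\to\Pic(Y)$ injective and killing the $\varprojlim^1$) and for $c\le n-4$ also $i=2$. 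Since no positivity of $\mathcal N$ is assumed, I would derive it from the blow-up $\pi\colon\widetilde X=\Bl_Y X\to X$: on $\widetilde X$ one has $\pi_*\mathcal O_{\widetilde X}(-mE)=\mathcal I^m$, $R^{>0}\pi_*\mathcal O_{\widetilde X}(-mE)=0$, and $H^i(Y,\Sym^m\mathcal N^\vee)\cong H^i(E,\mathcal O_{\widetilde X}(-mE)|_E)$, while $\widetilde X\setminus E\cong X\setminus Y$ forces $\cd(\widetilde X\setminus E)=c$; one then observes that $\mathcal O_{\widetilde X}(E)$ is $c$-ample (via Totaro's relation between partial ampleness and the cohomological dimension of the complement of an effective divisor) and combines a Kodaira--Nakano-type vanishing for $c$-ample line bundles with Serre duality on $E$ and the exact sequences $0\to\mathcal O_{\widetilde X}(-(m+1)E)\to\mathcal O_{\widetilde X}(-mE)\to\mathcal O_{\widetilde X}(-mE)|_E\to0$ to extract the displayed vanishing.

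Step 2 (from $\widehat X$ down to $X$; the identity component; the cokernel). For the comparison $\Pic(X)\to\Pic(\widehat X)$ I would invoke Grothendieck's Lefschetz conditions $\Lef(X,Y)$ and $\Leff(X,Y)$ (SGA~2): the point is that $c\le n-3$ implies $\Lef(X,Y)$—hence $\Pic(X)\to\Pic(\widehat X)$ is injective—via the local-cohomology sequence for $\mathcal O_X$ together with the concentration of $\mathcal H^\bullet_Y(\mathcal O_X)$ in degree $r$ (smoothness) and $H^i(X\setminus Y,\mathcal O)=0$ for $i>c$; combined with Step 1 this gives the injectivity in part~\eqref{item1:Pic-intro}. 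For the identity component, restriction defines a homomorphism of abelian varieties $\Pic^0_X\to\Pic^0_Y$; its source and target have the same dimension, $h^1(X,\mathcal O_X)=h^1(Y,\mathcal O_Y)$, by the Lefschetz hyperplane theorem for the complement in degree $1<\dim Y$, so it is an isogeny, and injectivity of $\Pic(X)\to\Pic(Y)$ makes it injective on points, hence an isomorphism $\Pic^0(X)\cong\Pic^0(Y)$. Finally, for part~\eqref{item2:Pic-intro}: the $\Pic^0$-isomorphism and the snake lemma give $\coker(\Pic(X)\to\Pic(Y))\cong\coker(\NS(X)\to\NS(Y))$, a finitely generated group which—since $\NS(X)\hookrightarrow\NS(Y)$—is finite precisely when $\rho(X)=\rho(Y)$; and the equality of Picard numbers follows from the isomorphism $H^2(X)\xrightarrow{\ \sim\ }H^2(Y)$ of (polarized) Hodge structures furnished, under the stronger hypothesis $c\le n-4$, by the Lefschetz hyperplane theorem in degree $2<\dim Y$, reducing to $\C$ by the Lefschetz principle if the base field is not $\C$.

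The main obstacle is Step 1: converting the single inequality $c\le n-3$ (resp.\ $c\le n-4$) into the precise range of cohomological vanishings, which requires (i) the passage from $\cd(X\setminus Y)\le c$ to partial ampleness of $\mathcal O_{\widetilde X}(E)$, (ii) a Kodaira--Nakano vanishing for $c$-ample line bundles valid already for the twists $\mathcal O_{\widetilde X}(\ell E)$ with $\ell\ge1$ (not merely $\ell\gg0$), and (iii) keeping track of the codimension $r$ through the depth of $\mathcal I^m$ along $Y$ and through the degree in which $\mathcal H^\bullet_Y(\mathcal O_X)$ is concentrated—all while making the vanishings uniform in $m$, so that the infinitesimal obstructions along $\{X_m\}$ genuinely vanish rather than merely stabilize.
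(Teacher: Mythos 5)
Your Step 2 and your treatment of the cokernel are essentially sound and run close to the paper's own argument (exponential sequence, Ottem's rational-coefficient Lefschetz theorem, Hodge theory, and the $\NS$/$\Pic^0$ d\'evissage). The gap is in Step 1, and it is fatal as written. First, the passage from $\cd(\widetilde X\setminus E)=c$ to $c$-ampleness of $\O_{\widetilde X}(E)$ is not a theorem: only the implication ``$\O_{\widetilde X}(E)$ is $q$-ample $\Rightarrow \cd(\widetilde X\setminus E)\le q$'' holds, and the converse requires extra positivity (for lci $Y$ in smooth $X$ one needs $\cN_{Y/X}$ ample \emph{in addition to} the condition on cohomological dimension, cf.\ \cite[Theorem~5.4]{Ott12}), whereas here nothing beyond the bound on $\cd(X\setminus Y)$ is assumed. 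Second, and more seriously, the ``Kodaira--Nakano-type vanishing for $c$-ample line bundles'' you invoke --- which must give $H^1(Y,\Sym^m\cN_{Y/X}^\vee)=0$ uniformly in $m\ge 1$, i.e.\ by Serre duality $H^{\dim Y-1}(Y,\omega_Y\otimes\Sym^m\cN_{Y/X})=0$ --- does not exist: Kodaira-type vanishing fails for $q$-ample line bundles (see \cref{r:q-ampleness-not-open-mix-char} and \cite[Section~9]{Ott12}), and exactly this kind of vanishing is taken as a \emph{hypothesis} in \cref{th:Kodaira-type} and verified only for special $Y$ via theorems of Debarre and Manivel. So the infinitesimal obstructions along the tower $\{X_m\}$ cannot be killed this way, and your route to the injectivity of $\Pic(X)\to\Pic(Y)$ collapses.

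The paper gets injectivity by a different mechanism, and this is the ingredient your proposal is missing. From Ottem's theorem and the exponential sequence one only extracts that $\Ker(\Pic(X)\to\Pic(Y))\cong\Ker(H^2(X,\Z)\to H^2(Y,\Z))$ is \emph{finite}; the torsion is then killed by \cref{cor:tf}: under the single hypothesis $\cd(X\setminus Y)<n-1$, the kernel of $\Pic(X)\to\Pic(Y)$ is torsion-free. That statement is proved not by vanishing theorems but by formal-function arguments --- one shows $K(\^X)$ is a field with $K(X)$ algebraically closed in it, and then a nontrivial torsion line bundle trivial on $Y$ would produce a cyclic \'etale cover of $X$ in which the preimage of $Y$ is disconnected, contradicting a connectedness theorem of B\u{a}descu--Schneider. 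If you want to salvage your outline, replace the vanishing in Step 1 by this torsion-freeness argument; the rest of your plan (isogeny argument for $\Pic^0$, finiteness of $\Coker(\NS(X)\to\NS(Y))$ from the degree-$2$ Hodge-structure isomorphism when $\cd(X\setminus Y)\le n-4$) then goes through as in the paper.
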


Case \eqref{item1:Pic-intro} of the theorem
applies to ample subvarieties of dimension $\ge 2$, and case \eqref{item2:Pic-intro}
to ample subvarieties of dimension $\ge 3$. 
The example of Veronese embeddings shows that the cokernel of $\Pic(X) \to \Pic(Y)$ can be nontrivial
(see \cref{eg:veronese}), hence the conclusions of \cref{th:Pic-intro} are optimal.
By duality, it follows from case \eqref{item1:Pic-intro} of \cref{th:Pic-intro} that the natural map
on Albanese varieties $\Alb(Y) \to \Alb(X)$ is an isomorphism, and this implies that the restriction of
the Albanese morphism of $X$ to $Y$ is the Albanese morphism of $Y$
(cf.\ \cref{th:Alb}).

The main new ingredient in the proof of \cref{th:Pic-intro} is the following 
torsion-freeness property which holds in all characteristics and under weaker assumptions on $X$ and $Y$.

\begin{introtheorem}[cf.\ \cref{cor:tf}]
\label{th:tf-intro}
Let $X$ be an $n$-dimensional projective variety defined over an algebraically closed field $k$, 
and let $Y\subset X$ be a positive-dimensional closed subscheme. 
Assume that $\cd(X \setminus Y) < n-1$.
Then the kernel of the restriction map $\Pic(X)\to\Pic(Y)$ is torsion-free if $\charK k = 0$, 
and its torsion is killed by a power of $p$ if $\charK k = p > 0$. 
\end{introtheorem}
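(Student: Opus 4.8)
The plan is to reinterpret a torsion class in the kernel as a cyclic covering of $X$ that becomes completely split over $Y$, and then to contradict the connectedness that the hypothesis $\cd(X\setminus Y)<n-1$ forces on such a covering.

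Two reductions cost nothing. Replacing $Y$ by $Y_{\red}$ changes neither $X\setminus Y$ nor the triviality of a restricted line bundle, so we may assume $Y$ reduced. And since only torsion is at issue it suffices to take $L\in\Pic(X)$ of exact order $m\ge 2$ with $L|_Y\cong\mathcal O_Y$ and to deduce the contradiction $L\cong\mathcal O_X$; splitting $L$ into its $p$-primary and prime-to-$p$ parts, $p=\ch k$ (each of which still restricts trivially to $Y$), we may moreover assume $m$ is either prime to $p$ or a power of $p$.

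Attach to a trivialization $L^{-m}\xrightarrow{\;\sim\;}\mathcal O_X$ the degree-$m$ cyclic cover $\pi\colon\tilde X:=\Spec_X\bigl(\bigoplus_{i=0}^{m-1}L^{-i}\bigr)\to X$; it is finite and flat. As $X$ is a projective variety, $H^0(X,\mathcal O_X)=k$, and a torsion line bundle on a projective variety carrying a nonzero global section is trivial; since $L$ has exact order $m$, this forces $H^0(X,L^{-i})=0$ for $0<i<m$, whence $H^0(\tilde X,\mathcal O_{\tilde X})=k$ and $\tilde X$ is connected. On the other hand $L|_Y\cong\mathcal O_Y$ gives $\pi^{-1}(Y)=\tilde X\times_X Y\cong\Spec_Y\bigl(\mathcal O_Y[t]/(t^m-1)\bigr)$, which for $m$ prime to $p$ is a disjoint union of $m\ge 2$ copies of $Y$ — hence disconnected. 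But $\pi$ is finite, so $\pi_*$ is exact and $\cd\bigl(\tilde X\setminus\pi^{-1}(Y)\bigr)=\cd\bigl(\pi^{-1}(X\setminus Y)\bigr)\le\cd(X\setminus Y)<n-1$; feeding this into the connectedness principle attached to the hypothesis — the same mechanism that already forces $Y$ itself to be connected, established by passing to the affine cone where it becomes an instance of the Hartshorne--Lichtenbaum vanishing and of the second vanishing theorem for local cohomology (Ogus in characteristic zero, Huneke--Lyubeznik in general) — shows that $\pi^{-1}(Y)$ is connected. This contradiction settles the case $p\nmid m$, and in particular the whole of characteristic zero.

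The case $p>0$ with $m$ a power of $p$, which after replacing $L$ by a power reduces to $m=p$, is the step I expect to be the main obstacle: now $\tilde X\to X$ is purely inseparable (a $\mu_p$-torsor), $\pi^{-1}(Y)$ is merely a nilpotent thickening of the connected $Y$, and the disconnection argument collapses. Here one must argue in flat cohomology — $L$ determines a class in $H^1_{\mathrm{fppf}}(X,\mu_p)\cong\Pic(X)[p]$ dying in $H^1_{\mathrm{fppf}}(Y,\mu_p)\cong\Pic(Y)[p]$ (using $Y$ reduced) — and the point is to show it dies on $X$. Translating through the affine cone $\hat X=\Spec\bigl(\bigoplus_{d\ge0}H^0(X,\mathcal O_X(d))\bigr)$ of an ample bundle (with $\hat Y\subseteq\hat X$ the cone over $Y$ and $v$ the vertex), over which $\hat X\setminus\{v\}\to X$ is an affine $\mathbb G_m$-bundle so that $\cd(\hat X\setminus\hat Y)\le\cd(X\setminus Y)\le n-2$ and hence the local cohomology $H^i_{\hat Y}(\hat X,\mathcal O_{\hat X})$ vanishes for $i\ge n$, this becomes a question about the class group of the punctured cone that those vanishings, together with the second vanishing theorem, are designed to control; alternatively one passes to the Frobenius twist, where $\Frob^*L$ is canonically trivial, and descends triviality along $\Frob$ using the vanishing of the restriction to the reduced $Y$. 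What makes the $p$-torsion case close on the same input as the rest is precisely that all the relevant local-cohomology vanishing statements are characteristic-independent; verifying that this input genuinely suffices, and accommodating the possible non-normality of $X$ (which can make $\tilde X$ reducible, so that throughout "connected" should be upgraded to "connected in codimension one"), are the remaining delicate points.
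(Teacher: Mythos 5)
Your argument for torsion of order prime to the characteristic is essentially the paper's: form the degree-$m$ cyclic cover $\pi\colon\~X\to X$ with $\pi_*\O_{\~X}\cong\bigoplus_{i=0}^{m-1}\cL^{-i}$, show $H^0(\~X,\O_{\~X})=k$ because the intermediate powers of a torsion line bundle on an integral projective scheme have no nonzero sections, observe that $\cL|_Y\cong\O_Y$ splits $\pi^{-1}(Y)$ into $m\ge 2$ disjoint copies of $Y$, and contradict a connectedness statement. The only real difference is how that connectedness is obtained: the paper routes it through the condition that $K(\^X)$ is a field with $K(X)$ algebraically closed in it (\cref{prop:tf} combined with \cref{prop:tf-K(^X)}, quoting B\u{a}descu--Schneider), whereas you apply the Lichtenbaum-type connectedness theorem directly to the finite cover, using the invariance of cohomological dimension under finite surjective morphisms. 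That shortcut is legitimate and uses exactly the two inputs already present in the proof of \cref{prop:tf}; the paper's detour buys the more general \cref{prop:tf-K(^X)} (applicable to any $G3$ subscheme), which is reused elsewhere in the paper. The reducibility issue you flag for non-normal $X$ is a genuine subtlety, but it is equally present in the paper's own write-up and is resolved by the same connectedness formalism, so I do not count it against you.

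The gap is the one you name yourself: $p$-power torsion in characteristic $p>0$. There the cover attached to $\cL$ is a $\mu_p$-torsor, hence purely inseparable; $\pi^{-1}(Y)$ is an infinitesimal thickening of $Y$ and in particular connected, so the disconnection argument gives nothing; and your proposed repair via flat cohomology, the affine cone, and Frobenius descent is a list of things one might try rather than an argument, as you acknowledge. Since the theorem is asserted over an arbitrary algebraically closed field, the proposal as it stands does not prove it. It is worth recording, however, that the paper's proof of \cref{prop:tf-K(^X)} does not visibly close this case either: it invokes ``the \'etale cyclic Galois cover associated with $\cL$,'' which is \'etale only when $m$ is prime to the characteristic, and asserts that $\pi^{-1}(Y)$ consists of $m$ disjoint copies of $Y$, which likewise requires $t^m-1$ to be separable. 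So you have not missed an idea that the paper supplies; you have correctly isolated the one point where the published argument itself needs an additional input (for instance a separate treatment of $\Pic(X)[p]\cong H^1_{\mathrm{fppf}}(X,\mu_p)$), but an identified and unfilled hole is still a hole.
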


We deduce \cref{th:tf-intro} from an analogous statement (see \cref{prop:tf-K(^X)}) 
where the assumption is expressed in terms of the ring $K(\^X)$ of formal rational functions of $X$ along $Y$. 
This ring was first introduced in \cite{HM68} as
a tool to measure the positivity of a closed subscheme, 
where $Y$ is said to be $G2$ if $K(\^X)$ is a finite module over $K(X)$ 
and $G3$ if $K(\^X) = K(X)$.

We now present some applications of these results.
The first two application holds in arbitrary characteristic, over an arbitrary algebraically closed field.

\begin{introcorollary}[cf.\ \cref{cor:G2}]
\label{cor:G2-intro}
If $X$ is a normal projective variety 
and $Y\subset X$ is a positive-dimensional connected closed subscheme that is $G2$ in $X$, 
then the induced map on Albanese varieties $\Alb(Y)\to \Alb(X)$ is surjective.
\end{introcorollary}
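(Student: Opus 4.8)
The plan is to dualize the statement into a finiteness assertion about Picard groups and then feed it into the torsion-freeness result. Recall that $\Alb(Y)$ is the universal abelian variety receiving a pointed morphism from $Y$, that its dual abelian variety is the largest abelian subvariety of $\Pic^0_{Y,\red}$, and that the homomorphism $\Alb(Y)\to\Alb(X)$ induced by $Y\hookrightarrow X$ is dual to the restriction homomorphism $\Pic^0_{X,\red}\to\Pic^0_{Y,\red}$; since $X$ is normal, $\Pic^0_{X,\red}$ is itself an abelian variety, so this restriction homomorphism, having abelian-variety source, automatically lands in the largest abelian subvariety of $\Pic^0_{Y,\red}$. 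Because a homomorphism of abelian varieties is surjective exactly when its dual has finite kernel, the corollary reduces to showing that the kernel of the restriction map $\Pic^0(X)\to\Pic^0(Y)$ is finite.

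To exploit the $G2$ hypothesis I would first pass to a finite cover. As $Y$ is connected, $K(\^X)$ is a field, and it is finite over $K(X)$ by assumption, so the normalization $p\colon X'\to X$ of $X$ in $K(\^X)$ is a finite morphism of normal projective varieties of some degree $d$; moreover, since $Y$ is $G2$, every component of $Y$ — and hence of $Y':=p^{-1}(Y)$ — is positive-dimensional. By the standard Hironaka--Matsumura reduction, $Y'$ is $G3$ in $X'$, so \cref{prop:tf-K(^X)} (the version of the torsion-freeness result formulated in terms of $K(\^X)$) applies to $Y'\subset X'$ and tells us that $\ker(\Pic(X')\to\Pic(Y'))$ is torsion-free. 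Now transfer this back along $p$: a torsion class in $\ker(\Pic(X)\to\Pic(Y))$ pulls back to a torsion class in $\ker(\Pic(X')\to\Pic(Y'))$, which must vanish, so the class lies in $\ker\big(p^*\colon\Pic(X)\to\Pic(X')\big)$; and this last group is killed by $d$, because $N_{X'/X}\circ p^*$ is multiplication by $d$ on $\Pic(X)$. Hence the torsion subgroup of $\ker(\Pic(X)\to\Pic(Y))$ has exponent dividing $d$.

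Finally, $\ker(\Pic^0(X)\to\Pic^0(Y))$ is the group of $k$-points of a closed subgroup scheme $\cG$ of the abelian variety $\Pic^0_{X,\red}$, and by the previous paragraph the torsion of $\cG(k)$ has bounded exponent. But the reduced identity component $\cG^{0}_{\red}$ is an abelian subvariety of $\Pic^0_{X,\red}$, and a positive-dimensional abelian variety over $k=\bar k$ has torsion points of unbounded order; therefore $\cG^{0}_{\red}=0$, i.e.\ $\cG$ is finite. By the reduction of the first paragraph, $\Alb(Y)\to\Alb(X)$ is surjective.

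The hard input — torsion-freeness of the kernel for a $G3$ subvariety — is \cref{prop:tf-K(^X)}, which is taken as given here; within the present argument the delicate points are the bookkeeping for a possibly singular and non-reduced $Y$ (that $\Alb(Y)^\vee$ is the largest abelian subvariety of $\Pic^0_{Y,\red}$, and that the normality of $X$ forces $\Pic^0_{X,\red}$ to be an abelian variety), and the invocation, in the precise form needed, of the Hironaka--Matsumura passage from $G2$ to $G3$. One can also argue geometrically: a failure of surjectivity would produce a nonconstant morphism $X\to A$ to an abelian variety that is constant on $Y$, and then, as $\dim A\ge 1$, a general fibre or the preimage of a complete curve in $A$ would be a positive-dimensional complete subvariety of $X$ disjoint from $Y$, contradicting the fact that a $G2$ subvariety meets every positive-dimensional complete subvariety.
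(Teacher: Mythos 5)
Your argument is correct in substance and rests on the same two pillars as the paper's proof of \cref{cor:G2}: the Gieseker--Hironaka--Matsumura construction of a finite cover $p\colon X'\to X$ (the normalization of $X$ in $K(\^X)$) on which a lift of $Y$ becomes $G3$, followed by \cref{prop:tf-K(^X)}. Where you diverge is in how you descend from $X'$ back to $X$. The paper stays on the Albanese side throughout: it invokes part (3) of \cref{prop:tf-K(^X)} to get $\Alb(Y')\to\Alb(X')$ surjective and then simply composes with the surjection $\Alb(X')\to\Alb(X)$ induced by the finite surjective $p$ --- three lines. You instead work on the Picard side, using part (1) (torsion-freeness of the kernel) on $X'$, descending via the norm argument ($\ker p^*$ is killed by $\deg p$ on a normal $X$), and then re-running the duality argument that the paper has already packaged inside the proof of \cref{prop:tf-K(^X)} (its (2)$\Leftrightarrow$(3) step). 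This costs you extra bookkeeping (the norm of Cartier divisors on a normal but possibly non-factorial $X$, and the identification of $\Alb(Y)^\vee$ with the maximal abelian subvariety of $\Pic^0_{Y,\red}$ for a possibly singular $Y$), all of which does go through, but buys nothing the shorter route doesn't already give.

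Two small corrections. First, the $G3$ subscheme produced by the Gieseker construction is not the full preimage $p^{-1}(Y)$ but a distinguished closed subscheme $Y'\subset p^{-1}(Y)$ along which $p$ is \'etale and which maps isomorphically onto $Y$; your argument is unaffected, since $p^*\cL$ still restricts trivially to this smaller $Y'$, but the claim as written is imprecise. Second, the closing ``geometric'' alternative relies on the assertion that a $G2$ subscheme meets every positive-dimensional complete subvariety of $X$; this is not justified and is not true in general --- by \cref{th:Speiser}, even $G3$ does not by itself imply that $Y$ meets every effective divisor --- so that aside should be dropped.
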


In characteristic zero, \cref{cor:G2-intro} can be seen as a strengthening of 
a theorem attributed to Matsumura where the surjectivity of the map between Albanese varieties is
established under the more restrictive assumptions
that both $X$ and $Y$ are smooth and $Y$ has ample normal bundle 
(cf.\ \cite[Exercise~III.4.15]{Har70}).

\begin{introcorollary}[cf.\ \cref{cor:diagonal}]
\label{cor:diagonal-intro}
If $X$ is a projective variety with nontrivial Albanese variety, 
then the diagonal of $X\times X$ is not $G2$.
\end{introcorollary}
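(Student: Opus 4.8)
The plan is to argue by contradiction and invoke Corollary B. Suppose $X$ is a projective variety with $\Alb(X) \neq 0$, and suppose the diagonal $\Delta \subset X \times X$ is $G2$. Note $\Delta \cong X$ is connected and positive-dimensional (since $\Alb(X) \neq 0$ forces $\dim X \geq 1$). First I would pass to the normalization: let $\nu \colon X' \to X$ be the normalization of $X$; then $X' \times X'$ is a normal projective variety, and I would check that the diagonal $\Delta' \subset X' \times X'$ is $G2$ in $X' \times X'$ — this should follow because being $G2$ is preserved under finite morphisms of the ambient variety (the induced map $X' \times X' \to X \times X$ is finite, $\Delta$ pulls back to $\Delta'$ set-theoretically, and $K(\widehat{X' \times X'})$ along $\Delta'$ is finite over $K(\widehat{X\times X})$ along $\Delta$, hence over $K(X\times X)$, hence over the finite extension $K(X'\times X')$). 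With $X' \times X'$ normal and $\Delta'$ connected, positive-dimensional, and $G2$, Corollary B applies and tells us that the induced map on Albanese varieties $\Alb(\Delta') \to \Alb(X' \times X')$ is surjective.

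Next I would compute both sides of this surjection. Since $\Delta' \cong X'$, we have $\Alb(\Delta') \cong \Alb(X')$. On the other hand, $\Alb(X' \times X') \cong \Alb(X') \times \Alb(X')$, using that the Albanese of a product is the product of the Albanese varieties. Under these identifications, the map $\Alb(\Delta') \to \Alb(X' \times X')$ induced by the diagonal embedding $X' \hookrightarrow X' \times X'$ is the diagonal map $\Alb(X') \to \Alb(X') \times \Alb(X')$, $a \mapsto (a,a)$. This map is surjective if and only if $\Alb(X') = 0$. But $\Alb(X') = \Alb(X) \neq 0$ (the normalization induces an isomorphism on Albanese varieties, or at least $\Alb(X')$ surjects onto $\Alb(X)$, which is enough to get a contradiction). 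This contradiction completes the proof.

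The step I expect to need the most care is the normalization reduction: verifying that $\Delta' \subset X' \times X'$ is genuinely $G2$, i.e. that $K(\widehat{X'\times X'})$ is a finite module over $K(X'\times X')$. The cleanest route is to observe that $X' \times X' \to X \times X$ is finite and that the formal-functions ring behaves well under such base change — concretely, if $Z \to W$ is a finite morphism of varieties and $Y \subset W$ with preimage $Y' \subset Z$, then $K(\widehat{Z})$ (completion along $Y'$) is finite over $K(\widehat{W})$ (completion along $Y$), so if the latter is finite over $K(W)$ then $K(\widehat{Z})$ is finite over $K(W)$ and a fortiori over the module-finite extension $K(Z)$. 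Alternatively, one could avoid normalization entirely if Corollary B (as proved in the body via Theorem B) can be pushed through for non-normal $X$; but as stated it requires normality, so the reduction seems unavoidable. The computation of the Albanese of the product and the identification of the diagonal map are standard and I would state them with a one-line justification.
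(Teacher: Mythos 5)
Your overall strategy is exactly the paper's: combine \cref{cor:G2} with the isomorphism $\Alb(X\times X)\cong\Alb(X)\times\Alb(X)$ and the observation that the induced map $\Alb(\Delta_X)\to\Alb(X)\times\Alb(X)$ is the diagonal homomorphism, hence not surjective once $\Alb(X)\neq 0$. The paper's proof of \cref{cor:diagonal} consists of precisely these two lines (and also records, via \cref{prop:tf-K(^X)}, the stronger conclusion that $K(X\times X)$ is not algebraically closed in $K(\widehat{X\times X})$, which needs no normality hypothesis at all).

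The one step you add --- the reduction to the normalization --- is where your write-up has a genuine error. The set-theoretic preimage of $\Delta_X$ under the finite map $X'\times X'\to X\times X$ is $X'\times_X X'$, not $\Delta_{X'}$; the two agree only when the normalization $\nu\colon X'\to X$ is injective. For a nodal curve $X$, for instance, the preimage of $\Delta_X$ is $\Delta_{X'}$ together with the two isolated points $(p_1,p_2)$ and $(p_2,p_1)$ over the node. Consequently the finiteness transfer you invoke (that the formal-rational functions of $X'\times X'$ along $\Delta_{X'}$ are finite over those of $X\times X$ along $\Delta_X$) does not follow from finiteness of the morphism: the comparison theorem \cite[Theorem~2.2]{BS02} identifies $K(\widehat{X\times X})$ with the ring of formal-rational functions along the \emph{full} preimage $X'\times_X X'$, and being $G2$ is not inherited by smaller closed subschemes. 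The gap is repairable: if $\nu$ is injective your argument works verbatim, and if not, $X'\times_X X'$ is disconnected with isolated points, whose contribution to the formal-rational function ring is the total quotient ring of a complete local ring of positive dimension, hence not finite over $K(X\times X)$ --- so $\Delta_X$ was not $G2$ to begin with. But as written the justification is incorrect. (You are right that \cref{cor:G2} as stated requires a normal ambient variety and that the paper applies it to $X\times X$ without comment; some reduction is indeed needed for non-normal $X$, it just has to be a different one.)
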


This should be contrasted with a result of 
B\u{a}descu and Schneider stating that if $X$ is a complex rational homogeneous space 
then the diagonal of $X\times X$ is $G3$ \cite[Theorem~2]{BS96}, 	
and a result of Halic stating that if $X$ is a 
smooth projective variety with non-pseudoeffective cotangent bundle 
then the diagonal is $G2$ \cite[Proposition~3.10]{Hal19}.
\cref{cor:diagonal-intro} shows that the result of Halic is fairly close to optimal.

We now turn to the case of characteristic zero; we still assume that the field
is algebraically closed. The following easy application of \cref{th:Pic-intro} 
generalizes a result of Sommese stating that abelian varieties of dimension $\ge 2$
cannot be realized as ample divisors of smooth projective varieties \cite[Corollary~I-A]{Som76}.

\begin{introcorollary}[cf.\ \cref{th:Y=A-cannot-be-ample}]
Abelian varieties of dimension $\ge 2$ cannot be realized as ample subvarieties 
of any smooth projective variety.
\end{introcorollary}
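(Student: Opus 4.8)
The plan is to derive a contradiction from the Grothendieck–Lefschetz theorem for ample subvarieties, \cref{th:Pic-intro}, using the special structure of abelian varieties. Suppose, for contradiction, that $A$ is an abelian variety with $\dim A = d \ge 2$ and that there is a smooth projective variety $X$ together with an embedding $A \hookrightarrow X$ realizing $A$ as an ample subvariety. Set $n = \dim X$. Since ampleness of a subvariety of codimension $r$ is meant in the sense of the introduction (the exceptional divisor of the blow-up being $(r-1)$-ample), the complement satisfies $\cd(X \setminus A) = r - 1 = n - d - 1 \le n - 3$ precisely because $d \ge 2$. Hence case \eqref{item1:Pic-intro} of \cref{th:Pic-intro} applies and yields an isomorphism $\Pic^0(X) \cong \Pic^0(A)$.

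The next step is to compare Albanese varieties. By the duality between $\Pic^0$ and $\Alb$ recorded in the excerpt (the remark following \cref{th:Pic-intro}, with reference to \cref{th:Alb}), the isomorphism $\Pic^0(X) \cong \Pic^0(A)$ dualizes to an isomorphism $\Alb(A) \xrightarrow{\ \sim\ } \Alb(X)$, and moreover the restriction to $A$ of the Albanese morphism $\alb_X \colon X \to \Alb(X)$ is the Albanese morphism of $A$. But $A$ is an abelian variety, so its Albanese morphism is (after a translation) the identity $A \to \Alb(A) = A$; in particular $\alb_X|_A$ is a closed immersion onto its image, which is all of $\Alb(X)$ via the isomorphism above. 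Thus the composition $A \hookrightarrow X \xrightarrow{\alb_X} \Alb(X)$ is an isomorphism, which forces the inclusion $A \hookrightarrow X$ to split $\alb_X$: the map $\alb_X$ admits a section whose image is $A$.

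The heart of the argument is now to see that this is incompatible with $A$ being \emph{ample} in $X$ unless $A = X$, which in turn is impossible since an ample subvariety has positive codimension (an $n$-dimensional "subvariety" of an $n$-dimensional $X$ would be $X$ itself, which is never $(-1)$-ample). The cleanest way I would run this: since $\alb_X$ has a section $s$ with $s(\Alb(X)) = A$, the variety $X$ retracts onto $A$, so $A$ moves in a family of deformations inside $X$ only trivially — more usefully, consider the normal bundle $\cN_{A/X}$. For an ample locally complete intersection subvariety the normal bundle must be ample (the property from \cite{Ott12} quoted in the introduction). On the other hand, because $A \hookrightarrow X \to \Alb(X) = A$ is the identity, the differential of $\alb_X$ along $A$ splits the tangent sequence $0 \to T_A \to T_X|_A \to \cN_{A/X} \to 0$, exhibiting $\cN_{A/X}$ as a direct summand (in fact a quotient that is split) of $T_X|_A$ with a trivializing structure coming from $\alb_X$; pulling back along the multiplication-by-$m$ isogeny of $A$ one sees $H^0(A, \cN_{A/X}^\vee) \ne 0$ after a suitable twist, contradicting ampleness of $\cN_{A/X}$ since $\dim A \ge 1$. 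The main obstacle is precisely this last step: making rigorous the claim that the splitting of the Albanese map forces $\cN_{A/X}$ to carry a section of its dual (equivalently, to fail to be ample), since it requires care with whether $A$ is a local complete intersection in $X$ (it is, being smooth in smooth $X$) and with extracting the positivity contradiction — here one can alternatively invoke directly that an ample subvariety $Y$ of a smooth $X$ satisfies $\cd(X \setminus Y) = \codim(Y) - 1$ together with the fact that a section $s$ of $\alb_X$ makes $\Alb(X) \setminus A$ affine-like in a way incompatible with $\cd$ being small, or simply cite \cref{cor:G2-intro}/the Matsumura-type statement to the effect that when $\Alb(Y) \to \Alb(X)$ is an isomorphism and $\cN_{Y/X}$ is ample, $X$ must itself be an abelian variety birational to $A$, forcing $X = A$.
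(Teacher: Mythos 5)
Your first half coincides with the paper's proof: from $\cd(X\setminus A)\le n-3$ you get $\Pic^0(X)\cong\Pic^0(A)$, dualize to $\Alb(A)\xrightarrow{\sim}\Alb(X)$, and observe that since the Albanese map of an abelian variety is an isomorphism, $\alb_X$ yields a retraction $\rho\colon X\to A$ splitting the inclusion $\iota\colon A\hookrightarrow X$. The gap is in the endgame, which you yourself flag as the ``main obstacle'': none of your three proposed conclusions works as stated. (i) The splitting $T_X|_A\cong T_A\oplus\cN_{A/X}$ coming from $d\rho$ does not produce a section of $\cN_{A/X}^\vee$ or of any twist of it; being a direct summand of $T_X|_A$ with trivial complement is not an obstruction to ampleness, and pulling back along multiplication by $m$ does not trivialize that summand. (ii) The remark that a section makes ``$\Alb(X)\setminus A$ affine-like'' is vacuous under your own identification ($A=\Alb(X)$, so that complement is empty), and if you mean $X\setminus A$, then \emph{small} cohomological dimension of the complement is what ampleness supplies, not what it forbids. (iii) \cref{cor:G2} asserts only that $\Alb(Y)\to\Alb(X)$ is surjective; neither it nor Matsumura's theorem says that $X$ must be an abelian variety, so that citation does not support your claim.

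The missing step is the one the paper actually uses. Since $\iota^*\circ\rho^*=\id_{\Pic(A)}$ and $\iota^*\colon\Pic(X)\to\Pic(A)$ is \emph{injective} by \cref{l:Pic-iso}, the map $\rho^*\colon\Pic(A)\to\Pic(X)$ is an isomorphism. In particular any ample line bundle $\cL$ on $X$ is of the form $\rho^*\cM$, hence is numerically trivial on every curve contracted by $\rho$; so $\rho$ contracts no curves and is finite (\cref{l:finite}). But $\rho\colon X\to A$ is surjective with $\dim X>\dim A$, because an ample subvariety has positive codimension, so it cannot be finite --- contradiction. This uses the injectivity half of Theorem~A in an essential way that your sketch never invokes; with that step supplied, your argument becomes the paper's proof of \cref{th:Y=A-cannot-be-ample}.
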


\cref{th:tf-intro} (in the form given in \cref{prop:tf-K(^X)}, to be precise) 
is also used in the next result, whose proof combines
cohomological methods with a vanishing theorem on abelian varieties due 
to Debarre \cite{Deb95}. 

\begin{introtheorem}[cf.\ \cref{th:X-abelian}]
\label{th:X-abelian-intro}
Let $X$ be an abelian variety
and $Y\subset X$ a smooth subvariety of codimension $r$ with ample normal bundle.
Then any surjective morphism $\p \colon Y \to Z$ with $\dim Y - \dim Z > r$
extends uniquely to a morphism $\~\p \colon X \to Z$, 
and the Stein factorization of $\~\pi$ is, up to translation, a quotient morphism 
of abelian varieties $X\to X/B$ composed with a finite surjective map $X/B\to Z$.
\end{introtheorem}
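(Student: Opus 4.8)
The strategy is to first produce a rational extension of $\pi$ to all of $X$, then show it is a morphism, and finally analyze its Stein factorization using the group structure. The natural candidate for the extension is obtained through the theory of formal functions: since $Y$ has ample normal bundle inside the abelian variety $X$, by the characteristic-zero results of Hartshorne--Hironaka--Matsumura the subvariety $Y$ is $G3$ in $X$ provided $\dim Y$ is large enough, and more generally one controls $K(\^X)$. In our situation $\dim Y - \dim Z > r$, so the graph of $\pi$ inside $Y \times Z$ has codimension $> r$ when viewed appropriately; the point is that a surjective morphism $\pi \colon Y \to Z$ with target dimension small compared to $\dim Y$ cannot "see" transversal directions, and the ampleness of $\cN_{Y/X}$ together with the cohomological-dimension bound (here we use \cref{prop:tf-K(^X)} and the fact that $\cd(X \setminus Y) = r - 1 < n - 1$) forces the field $K(Z) \subset K(Y)$ to extend to a subfield of $K(X)$, i.e.\ a dominant rational map $\~\pi \colon X \rat Z$ restricting to $\pi$ on $Y$. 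Uniqueness of this rational extension is clear since $Y$ is not contained in any proper closed subset.

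Next I would promote the rational map $\~\pi \colon X \rat Z$ to an honest morphism. This is where the group structure of $X$ enters decisively. A standard argument (going back to Weil's theorem on rational maps from group varieties, or its consequence for abelian varieties) shows that a rational map from an abelian variety $X$ to a variety $Z$ is defined everywhere if $Z$ contains no rational curves — but here $Z$ need not have that property. Instead, one uses that $X$ is smooth and projective, so the indeterminacy locus of $\~\pi$ has codimension $\ge 2$; combining this with the fact that $\~\pi$ restricted to the ample-in-codimension subvariety $Y$ (which meets every divisor, since $\cN_{Y/X}$ ample) is already a morphism, one rules out that the indeterminacy locus is nonempty: if it were, its intersection with $Y$ would be forced to be nonempty by a dimension/ampleness count, contradicting that $\pi$ is defined on all of $Y$. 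After obtaining the morphism $\~\pi \colon X \to Z$, its image is a subvariety of $Z$ containing $\pi(Y) = Z$, hence $\~\pi$ is surjective.

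Finally, for the structure of the Stein factorization, let $\~\pi = g \circ h$ with $h \colon X \to W$ having connected fibers and $g \colon W \to Z$ finite. Translating so that $\~\pi$ (equivalently $h$) maps the identity of $X$ to a chosen base point, I would invoke the rigidity lemma / the universal property of the Albanese to conclude that $h$ is a morphism of abelian varieties, hence of the form $X \to X/B$ for $B = (h^{-1}(h(0)))^0$ an abelian subvariety; then $W \cong X/B$ and $g \colon X/B \to Z$ is finite surjective, which is exactly the asserted description. The main obstacle I anticipate is the first step: establishing that $\pi$ extends rationally to $X$ at all. This requires a careful use of the formal-functions machinery — one must show that $\dim Y - \dim Z > r$ is precisely the numerical input that makes the relevant cohomology (controlling whether $K(Z)$ lifts into $K(\^X)$, and then into $K(X)$) vanish, and it is here that Debarre's vanishing theorem on abelian varieties is needed to supplement the general $G2/G3$ criteria, since the bound $\dim Y - \dim Z > r$ is weaker than what the naive Hartshorne-type ampleness estimate alone would give.
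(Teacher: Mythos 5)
There is a genuine gap: the heart of the theorem is the extension step, and your plan defers exactly that. You acknowledge that one must show the inclusion $K(Z)\subset K(Y)$ lifts into $K(\^X)$ (and then into $K(X)$ via $G3$), and that ``the relevant cohomology'' must vanish, but you do not identify or compute it; this computation \emph{is} the proof. The paper carries it out not at the level of function fields but at the level of line bundles and their sections: Debarre's Barth--Lefschetz theorem for subvarieties of abelian varieties, combined with the fact that $Y$ is $G3$ in $X$ (because $2\dim Y\ge \dim X$), gives injectivity of $\Pic(X)\to\Pic(Y)$, so $\pi^*\cA$ lifts to a line bundle $\cL$ on $X$; then Debarre's vanishing $H^i(Y,\omega_Y\otimes\Sym^t\cN_{Y/X}\otimes\cM)=0$ for $i\ge r$, $t\ge 1$, fed through the blow-up computation of \cref{l:H^0(L)=H^0(L|Y)} and the bound $\cd(X\setminus Y)\le n-2$, shows that $H^0(X,\cL)\to H^0(Y,\cL|_Y)$ is bijective. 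Your function-field route could perhaps be made to work, but the cohomological input it requires is essentially this same vanishing, and without it the argument has no content.

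Two of the steps you do spell out are also incorrect as written. To promote the rational map to a morphism you argue that a nonempty indeterminacy locus would be forced to meet $Y$ ``by a dimension/ampleness count''; but geometric non-degeneracy only forces $W\cap Y\ne\emptyset$ when $\dim W+\dim Y\ge\dim X$, i.e.\ $\dim W\ge r$, whereas the indeterminacy locus of a rational map from a smooth variety is merely of codimension $\ge 2$ and may well be a point, which $Y$ (of codimension $r\ge 1$) has no reason to contain. The paper sidesteps this by controlling the dimension from below: the common zero locus of $\dim Z+1$ general sections of $\cL$ has dimension $\ge n-\dim Z-1>r-1$ if nonempty, hence must meet $Y$, contradicting the bijectivity of restriction and global generation of $\cL|_Y$. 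Likewise, uniqueness is not ``clear'': $Y$ is a proper closed subset, and two distinct morphisms can agree on a closed subvariety; the paper's \cref{prop:unique} deduces uniqueness from the injectivity of $\Pic(X)\to\Pic(Y)$ together with $H^0(X,\cL)\cong H^0(Z,\cA)$. Your sketch of the Stein factorization is closer to the mark, but you must first prove that the intermediate variety $W$ is an abelian variety before any rigidity or homomorphism argument applies; the paper's \cref{l:mapfromA} does this by noting that a general fiber has trivial normal, hence trivial tangent, bundle.
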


The above theorem relates to a conjecture of Sommese \cite[Page 71]{Som76}
which predicts that if $X$ is a smooth complex projective variety and
$Y \subset X$ is defined by the vanishing of a regular section of an ample
vector bundle $\cE$ of rank $r$, 
then any morphism $\p \colon Y \to Z$ with $\dim Y - \dim Z > r$
extends to a morphism $\~\p \colon X \to Z$. 
\cref{th:X-abelian-intro} shows in particular that Sommese's
conjecture holds (under far less restrictive conditions on the embedding $Y \subset X$)
when $X$ is an abelian variety. 

The last part of the paper is devoted to a discussion of Sommese's conjecture. 
We look at the conjecture without assuming that $Y \subset X$ is defined by 
a regular section of an ample vector bundle on $X$, but just
requiring that $Y$ is an ample subvariety of $X$ (see \cref{conj:Som}).

\cref{th:Pic-intro} provides the first step towards the conjecture, 
implying unicity of the extension and setting up the argument for existence. 
The cohomological arguments used in the proof of \cref{th:X-abelian} yield
a general sufficient condition (a Kodaira-type vanishing) for extendability.  
Combining this with vanishing theorems due to Debarre \cite{Deb95} and Manivel \cite{Man96},
we can then easily verify the conjecture when either $Y$ or $\p$ are special, 
built up from toric and abelian varieties
(see \cref{th:special-Y,th:fiber-pi-special} for the precise statements).
We direct the reader to \cite{dFL} for further results toward the conjecture
when $\p$ has rationally connected fibers.

\subsubsection*{Acknowledgements}
We thank Daniel Litt for discussions on Sommese's conjecture 
in relation to his work \cite{Lit18}
and Adrian Langer for useful comments. 
We also thank the referee for useful comments and suggestions.
Finally, we thank Emelie Arvidsson and Lena Ji for 
pointing out an error in \cref{th:tf-intro} 
of the published version of the paper about torsion in positive characteristics, which we fix in this updated version.

\section{Ample subschemes and related properties}
\label{s:positivity}

The search for good positivity properties of arbitrary closed subschemes
of projective varieties can be traced back at least 
to \cite{Har68,Har70}, and to some extent to \cite{HM68}.

The approach followed in \cite{HM68} relies on looking at the formal neighborhood of the subscheme. 
To fix notation, let $X$ be a projective variety over a field,
and let $Y \subset X$ be a closed subscheme. 
We denote by $\^X$ the formal completion of $X$ along 
$Y$ and by $K(\^X)$ the ring of formal rational functions on $\^X$.
Note that there is a natural inclusion $K(X) \subset K(\^X)$, where $K(X)$ is the function field of $X$.
Hironaka and Matsumura looked at properties of this field extension 
as a way of encoding positivity properties of $Y$ in $X$. They introduced the following terminology.

\begin{definition}
With the above notation,  
$Y$ is said to be $G2$ in $X$ if $K(\^X)$ is a finite module over $K(X)$,
and $G3$ if the natural inclusion $K(X) \subset K(\^X)$ is an isomorphism.
\end{definition}

These conditions measure the positivity of the embedding $Y\subset X$.
For instance, 
over an algebraically closed field of characteristic zero, a locally complete intersection
subscheme with ample 
normal bundle is $G2$ \cite[Corollary~6.8]{Har68} and in fact the same is true
under a weaker positivity condition on the normal bundle (see 
\cite[Corollary~2.6]{Hal19} for more details). 
Similarly, a smooth ample subvariety (as defined below in \cref{def:ample-Y}) is $G3$ \cite[Corollary~5.6]{Ott12}.
It is clear that $G3$ implies $G2$, and an example of a subscheme $Y \subset X$ that 
is $G2$ but not $G3$ is given in \cite[Page~588]{Hir68}
(cf.\ \cite[Page~199]{Har70}).
Other related measures of posivity, introduced by Grothendieck in 
\cite{SGA2}, are the \emph{Lefschetz condition} $\Lef(X,Y)$ and the
\emph{effective Lefschetz condition} $\Leff(X,Y)$ (see \cite[Section~IV.1]{Har70} for a discussion).

The approach via formal rational functions 
was further developed in \cite{Har68,Har70,Gie77}, and later in \cite{BS96,BS02,Bad04}. 
In \cite{Har70}, Hartshorne also studied other geometric properties 
that can be interpreted as positivity conditions. 
For instance, when $X$ is smooth and $Y$ is locally complete intersection, 
Hartshorne looked at the ampleness of the normal bundle $\cN_{Y/X}$
and the condition that the cohomological dimension $\cd(X \setminus Y)$
of the complement be minimal (equal to $r-1$ where $r$ is the condimention of $Y$).
Remarkably, in characteristic zero
these two properties turned out to characterize what is now considered  the corrected notion of
ampleness (see \cref{def:ample-Y}), 
at least for locally complete intersection subvarieties of smooth projective varieties
\cite[Theorem~5.4]{Ott12}.

It took however over half a century for a general definition of ampleness for subschemes
to be formalized.
The right approach turned out to be through a suitable weakening of the notion of ampleness for line bundles, 
something that has an independent history. 
The starting point is Serre's characterization of ampleness. 
To weaken the definition of ampleness, one can require less cohomological vanishing. 
In \cite{AG62}, Andreotti and Grauert look at the eigenvalues of a metrics of a line bundle to
define the notion $q$-positivity where $q$ is a nonnegative integer, 
and relate this property to a partial cohomological vanishing. 
The first definition of a $q$-ample line bundle appears in \cite{Som78}, but Sommese's definition
is geometric and only applies to globally generated line bundles.
Partial cohomological vanishing of line bundles 
was further investigated in connection to $q$-positivity in \cite{DPS96,Dem10}, 
and an asymptotic point of view was proposed in \cite{dFKL07}.

These works eventually led to a general definition of $q$-ampleness, which was given 
by Totaro in \cite{Tot13}. 

\begin{definition}
Let $q$ be a nonnegative integer. A line bundle $\cL$ on an $n$-dimensional projective variety $X$ 
over a field is said to be, respectively:
\begin{enumerate}
\item\label{item:q-T-ample}
\emph{$q$-$T$-ample} if for a given ample line bundle $\O_X(1)$ on $X$ and for some positive integer $N$ we have
$H^{q+i}(X,\cL^{\otimes N}(-n-i)) = 0$ for $1 \le i \le n - q$;
\item\label{item:naive-q-ample}
\emph{naively $q$-ample} if for every coherent sheaf $\cF$ on $X$ we have 
$H^i(X,\cF \otimes \cL^{\otimes m}) = 0$ for all $i > q$ and all $m$ sufficiently large
depending on $\cF$; 
\item\label{item:unif-q-ample}
\emph{uniformly $q$-ample} if there exists a constant $\lambda > 0$ 
such that for every coherent sheaf $\cF$ on $X$ we have 
$H^i(X,\cL^{\otimes m}(-j)) = 0$ for all $i > q$, $j > 0$, and $m \ge \lambda j$.
\end{enumerate}
In general, we have 
$\eqref{item:unif-q-ample} \Rightarrow \eqref{item:naive-q-ample} \Rightarrow \eqref{item:q-T-ample}$, 
and the three properties are equivalent in characteristic zero \cite[Theorem~6.2]{Tot13}. 
In characteristic zero, we say that $\cL$ is \emph{$q$-ample} if it satisfies any of these equivalent 
conditions. The same terminology is used for a Cartier divisor $D$ if the condition is satisfied by 
$\O_X(D)$. 
\end{definition}

\begin{remark}
\label{r:q-ampleness-not-open-mix-char}
The notion of $q$-ampleness presents some subtleties when passing to positive characteristic.
For instance, $q$-$T$-ampleness is an open property, but naively $q$-ampleness
is not open in mixed characteristic: if it were, then
it would follow by \cite[Corollary~8.5]{Ara04} that $q$-ample line bundles on smooth varieties
of characteristic zero satisfy the Kodaira vanishing $H^i(X,\omega_X\otimes \cL) = 0$ for $i > q$, 
but there are examples where such vanishing property does not hold, see
\cite[Section~9]{Ott12} or \cite[Section~5]{Lau16}. 
\end{remark}

Using this general definition of $q$-ampleness, the notion of an ample subscheme 
was finally introduced in characteristic zero by Ottem \cite{Ott12}.

\begin{definition}
\label{def:ample-Y}
Let $X$ be a projective variety over a field of characteristic zero.
A closed subscheme $Y \subset X$ of codimension $r > 0$ is said to be 
\emph{ample} if the exceptional
divisor $E$ of the blow-up of $X$ along $Y$ is $(r-1)$-ample. 
\end{definition}

Examples are given by subschemes defined (scheme theoretically) by regular sections
of ample vector bundles \cite[Proposition~4.5]{Ott12}, 
and ample subschemes can be thought of as a generalization of this notion. 
In the smooth case, they have several similar properties such 
as having an ample normal bundle and satisfying
a Lefschetz hyperplane theorem for rational cohomology \cite[Corollary~5.6]{Ott12}, 
though some differences occur:
for instance, the Lefschetz hyperplane theorem for integral cohomology
does not hold in general (e.g., see \cite[Example~7.3]{Ott12}).

\section{Submaximal cohomological codimension of the complement}

Throughout this section, let $X$ be a projective variety over an algebraically closed field 
of arbitrary characteristic, 
and let $Y \subset X$ be a closed subscheme. 
We denote by $n$ the dimension of $X$. 
By the Lichtenbaum theorem, if $Y$ is nonempty then $\cd(X \setminus Y) \le n-1$, 
see for instance \cite[Section~III.3]{Har70} or \cite[Theorem~7.2]{Bad04}.
In this section, we study properties of the embedding $Y \subset X$
that holds when the cohomological dimension is submaximal, that is, 
when $\cd(X \setminus Y) < n-1$. 
Our focus will be on properties that relate to Picard groups and Albanese maps.
The following result is due to Speiser (see also \cite[Section~V.2]{Har70}). 

\begin{theorem}[\protect{\cite[Theorem~3]{Spe73}}]
\label{th:Speiser}
Let $X$ be an $n$-dimensional smooth projective variety over an algebraically closed field of arbitrary characteristic 
and $Y\subset X$ be a closed subscheme. Then the following are equivalent:
\begin{enumerate}
\item
\label{item:G3+div}
$Y$ is $G3$ in $X$ and meets every effective divisor on $X$. 
\item
\label{item:G2+cd}
$Y$ is $G2$ in $X$ and $\cd(X \setminus Y) < n-1$. 
\end{enumerate}
\end{theorem}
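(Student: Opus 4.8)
The plan is to prove $(1)\Leftrightarrow(2)$ by first peeling off two elementary observations and then reducing everything to a dictionary — supplied by formal duality — between the cohomological dimension of $X\setminus Y$ and formal global sections. First, $G3$ trivially implies $G2$. Second, $\cd(X\setminus Y)<n-1$ already forces $Y$ to meet every effective divisor: if $D$ is an effective divisor with $D\cap Y=\varnothing$, then $\Supp(D)$ is a closed subscheme of $X\setminus Y$ that is complete over the base field and has dimension $n-1$, so — cohomological dimension not increasing under passage to a closed subscheme, while a complete variety has cohomological dimension equal to its dimension (Lichtenbaum) — one would get $\cd(X\setminus Y)\ge n-1$, a contradiction. (A Mayer–Vietoris argument with $\omega_X$ likewise shows $\cd(X\setminus Y)<n-1$ forces $Y$ connected, so $K(\^X)$ is a field.) Hence in $(2)\Rightarrow(1)$ the clause ``$Y$ meets every effective divisor'' is automatic, and what remains is to prove $[\,G2$ and $\cd(X\setminus Y)<n-1\,]\Rightarrow G3$ and $[\,G3$ and $Y$ meets every effective divisor$\,]\Rightarrow\cd(X\setminus Y)<n-1$.

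The technical heart is to reformulate $\cd(X\setminus Y)<n-1$. Fix an ample $\mathcal O_X(1)$. Using the long exact sequence $\cdots\to H^i(X,\mathcal F)\to H^i(X\setminus Y,\mathcal F)\to H^{i+1}_Y(X,\mathcal F)\to H^{i+1}(X,\mathcal F)\to\cdots$, Lichtenbaum's theorem (so $H^n(X\setminus Y,-)=0$), Serre vanishing (so $H^{n-1}(X,\mathcal O_X(-m)\otimes\omega_X)=0$ for $m\gg0$), and the fact that every coherent sheaf on $X\setminus Y$ is a quotient of a finite sum of copies of $(\mathcal O_X(-m)\otimes\omega_X)|_{X\setminus Y}$ for $m\gg0$, I would reduce $\cd(X\setminus Y)<n-1$ to the injectivity of $H^n_Y(X,\mathcal O_X(-m)\otimes\omega_X)\to H^n(X,\mathcal O_X(-m)\otimes\omega_X)$ for $m\gg0$. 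Dualizing through Serre duality on $X$ and Hartshorne's formal duality — which identifies $H^i_Y(X,\mathcal E)$, for $\mathcal E$ locally free, with the topological dual of $H^{n-i}(\^X,\^{\mathcal E}^\vee\otimes\^{\omega_X})$, compatibly with the completion maps — this turns into: $\cd(X\setminus Y)<n-1$ holds if and only if the completion map
\[
H^0(X,\mathcal L)\longrightarrow H^0(\^X,\^{\mathcal L})
\]
is an isomorphism for every line bundle $\mathcal L$ on $X$ (equivalently, for $\mathcal L=\mathcal O_X(m)$, $m\gg0$). The map is always injective, since a rational section of a line bundle on the irreducible $X$ that vanishes formally along the nonempty $Y$ vanishes; the content is surjectivity.

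With this dictionary, $(1)\Rightarrow(2)$ is short: if $Y$ is $G3$ and meets every effective divisor and $\^{\sigma}\in H^0(\^X,\^{\mathcal L})$, pick any nonzero rational section $s_0$ of $\mathcal L$; then $\^{\sigma}/\^{s_0}\in K(\^X)=K(X)$, so $\^{\sigma}$ is the completion of the rational section $fs_0$ for some $f\in K(X)$, whose polar divisor is an effective divisor disjoint from $Y$, hence empty, so $fs_0\in H^0(X,\mathcal L)$ — thus the completion maps are isomorphisms, $\cd(X\setminus Y)<n-1$, and $G2$ holds because $G3$ does. For $(2)\Rightarrow(1)$ I would pass to the $G2$-cover: since $Y$ is $G2$, the normalization $\nu\colon X'\to X$ of $X$ in the finite extension $K(\^X)\supseteq K(X)$ is finite, $Y':=\nu^{-1}(Y)$ is (classically) $G3$ in $X'$, and $Y'$ meets every effective divisor of $X'$ ($\nu$ being finite surjective and $Y$ meeting every effective divisor of $X$, the latter since $\cd(X\setminus Y)<n-1$). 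Feeding the $G3$ pair $(X',Y')$ into the section-comparison above, feeding $(X,Y)$ into the reformulation of $\cd(X\setminus Y)<n-1$, and comparing along $\nu$ via $0\to\mathcal O_X\to\nu_*\mathcal O_{X'}\to\mathcal Q\to0$, one is forced to conclude that the coherent sheaf $\mathcal Q$ vanishes; then $\nu$ is an isomorphism, $K(\^X)=K(X)$, and $Y$ is $G3$.

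The step I expect to be the main obstacle is the reformulation in the second paragraph — fixing the linear topologies on the local-cohomology and formal-cohomology groups and checking the compatibility of formal duality with the restriction and completion maps — and, intertwined with it, the concluding comparison in $(2)\Rightarrow(1)$ that forces $\mathcal Q=0$: this is exactly where both the finiteness coming from $G2$ and the property ``$Y$ meets every effective divisor'' (itself extracted from $\cd(X\setminus Y)<n-1$) are genuinely used. Everything else — the divisor argument, the Mayer–Vietoris connectedness, the Serre-vanishing bookkeeping, and the construction of the $G2$-cover — is routine or standard.
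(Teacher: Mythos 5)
This statement is not proved in the paper: it is quoted from Speiser \cite{Spe73}*{Theorem 3}, so there is no in-house proof to compare against. Judged on its own terms, most of your outline is sound and follows the classical Hartshorne--Hironaka--Matsumura line: the observation that $\cd(X\setminus Y)<n-1$ already forces $Y$ to meet every effective divisor is correct; the duality dictionary identifying $\cd(X\setminus Y)<n-1$ with bijectivity of the completion maps $H^0(X,\cL)\to H^0(\^X,\^{\cL})$ is correct (the topological bookkeeping you flag is handled by the duality between discrete and linearly compact vector spaces, as in \cite{Har70}*{Ch.~V} and \cite{Har68}*{\S 7}); and your proof of $(1)\Rightarrow(2)$ by clearing a formal section against a rational section and killing the polar divisor is complete.

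The genuine gap is in $(2)\Rightarrow(1)$. You assert that $Y':=\nu^{-1}(Y)$ is ``classically'' $G3$ in the normalization $X'$ of $X$ in $K(\^X)$. That is not what the classical results give: \cite{HM68}, \cite{Gie77} and \cite{BS02}*{Theorem~2.5} produce a distinguished \emph{connected component} $Y''$ of $\nu^{-1}(Y)$ along which $\nu$ is \'etale and which maps isomorphically to $Y$, and only that component is $G3$; if $\nu^{-1}(Y)$ were disconnected, the ring of formal rational functions along it would be a product of fields and could not equal $K(X')$. Working instead with $Y''$, you would still need to know that it meets every effective divisor of $X'$ (that $Y$ meets every divisor of $X$ does not tell you the particular component $Y''$ meets a particular divisor $D'\subset X'$), and the concluding step forcing $\cQ=0$ is left schematic. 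The standard way to close this --- and it is exactly the argument the paper runs in \cref{prop:tf} for the weaker statement that $K(X)$ is algebraically closed in $K(\^X)$ --- is: $\nu$ is finite surjective, so $\cd(X'\setminus\nu^{-1}(Y))=\cd(X\setminus Y)<n-1$ by \cite{Har68}*{Proposition~1.1}, hence $\nu^{-1}(Y)$ is connected; since $\nu$ is \'etale along $Y''$ and restricts to an isomorphism $Y''\to Y$, the component $Y''$ is open and closed in $\nu^{-1}(Y)$, so $\nu^{-1}(Y)=Y''$ and $\deg\nu=[K(\^X):K(X)]=1$. Alternatively, your section comparison can be salvaged with no $G3$ input on $X'$: \'etaleness along $Y''$ gives $\^{X'}_{Y''}\cong\^X$, so the injection $H^0(X',\nu^*\cO_X(m))\hookrightarrow H^0(\^{X'}_{Y''},\cdot)\cong H^0(\^X,\^{\cO_X(m)})\cong H^0(X,\cO_X(m))$ bounds $h^0(X',\nu^*\cO_X(m))$ by $h^0(X,\cO_X(m))$, and the asymptotics in $m$ force $\deg\nu=1$. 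Either way, the missing ingredient is the transfer of the cohomological-dimension hypothesis (or of the formal neighborhood) from $X$ to $X'$, which your sketch never invokes.
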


Under some additional hypothesis on $X$, Speiser also claimed that the $G2$ condition in \eqref{item:G2+cd}
is not necessary. More precisely, in \cite[Theorem~3.1]{Spe80}, he stated that 
$X$ is Gorenstein and $Y\subset X$ is positive-dimensional and is contained in the smooth locus of $X$, 
then in order to guarantee that \eqref{item:G3+div} holds it suffices to assume that $\cd(X-Y)<n-1$. 
Unfortunately, the proof given there is incorrect. 

Nonetheless, we have the following property.

\begin{proposition}
\label{prop:tf}
Let $X$ be an $n$-dimensional projective variety over an algebraically closed field
and $Y\subset X$ a positive-dimensional closed subscheme. 
Assume that $\cd(X \setminus Y) < n-1$.
Then $K(\^X)$ is a field and $K(X)$ is algebraically closed in $K(\^X)$. 
\end{proposition}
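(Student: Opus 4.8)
The plan is to reduce the statement, through finite base change, to a connectedness property of $Y$, and then to invoke a theorem of Hironaka--Matsumura. First I would reduce to the case where $X$ is normal. Let $\nu\colon X^\nu\to X$ be the normalization and $Y^\nu=\nu^{-1}(Y)$; since $\nu$ is finite, hence affine, and induces a surjective affine morphism $X^\nu\setminus Y^\nu\to X\setminus Y$, one has $\cd(X^\nu\setminus Y^\nu)\le\cd(X\setminus Y)<n-1$, while $\dim Y^\nu=\dim Y\ge1$ and $K(X^\nu)=K(X)$. Because completion along $Y$ is exact on coherent sheaves, the inclusion $\cO_X\hookrightarrow\nu_*\cO_{X^\nu}$ stays injective after completing, so the ring $\^{\cO}_{X,Y}$ of formal functions of $X$ near $Y$ (whose total ring of fractions is $K(\^X)$) is a subring of $\^{\cO}_{X^\nu,Y^\nu}$; consequently, if the latter is a domain then so is the former, $K(\^X)=\operatorname{Frac}(\^{\cO}_{X,Y})$ embeds into $K(\^{X^\nu})=\operatorname{Frac}(\^{\cO}_{X^\nu,Y^\nu})$ compatibly with the inclusions of $K(X)=K(X^\nu)$, and the two assertions for $X^\nu$ descend to $X$. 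So I assume $X$ normal from now on; in that case $\^{\cO}_{X,Y}$ will be a normal ring, and it remains to show that it is a domain and that $K(X)$ is algebraically closed in $K(\^X)$.

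The key geometric input is that, for every finite extension $L$ of $K(X)$, with $f\colon X'\to X$ the normalization of $X$ in $L$ and $Y'=f^{-1}(Y)$, the scheme $Y'$ is connected. Indeed $\cd(X'\setminus Y')\le\cd(X\setminus Y)\le n-2$ as above, and if $Y'=Y_1'\sqcup Y_2'$ with both parts nonempty and closed, the Mayer--Vietoris sequence for the open cover $X'=(X'\setminus Y_1')\cup(X'\setminus Y_2')$, whose intersection is $X'\setminus Y'$, applied to $\cO_{X'}(-mA')$ for an ample divisor $A'$ on $X'$, contains
\[
H^{n-1}(X'\setminus Y',\cO_{X'}(-mA'))\to H^n(X',\cO_{X'}(-mA'))\to\bigoplus_{i=1,2}H^n(X'\setminus Y_i',\cO_{X'}(-mA')).
\]
For $m\gg0$ the left-hand group vanishes by $\cd(X'\setminus Y')\le n-2$ and the right-hand groups vanish by the Lichtenbaum bound $\cd(X'\setminus Y_i')\le n-1$, while the middle group is nonzero, e.g.\ because Serre duality identifies it with the dual of $H^0$ of a large twist of the (nonzero) dualizing sheaf of the $n$-dimensional variety $X'$; this contradiction proves the claim. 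One could alternatively quote the connectedness theorem for cohomological dimension.

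To conclude, the theorem of Hironaka--Matsumura---that for a normal projective variety and a connected closed subscheme of positive dimension the ring of formal functions is an integral domain---applied to $X$ (with $Y$ connected by the case $L=K(X)$ of the claim) gives that $\^{\cO}_{X,Y}$ is a domain, hence $K(\^X)=\operatorname{Frac}(\^{\cO}_{X,Y})$ is a field. Now let $\alpha\in K(\^X)$ be algebraic over $K(X)$, set $L=K(X)(\alpha)$, and let $f\colon X'\to X$, $Y'$ be as in the claim. Choosing $e_1,\dots,e_N\in\cO_{X'}(f^{-1}V_0)\subseteq L$ that generate $f_*\cO_{X'}$ over $\cO_X$ on a neighborhood $V_0$ of $Y$, compatibility of completion with the finite pushforward shows that $\^{\cO}_{X',Y'}$ is finite over $\^{\cO}_{X,Y}$, generated by $e_1,\dots,e_N$, and that $\^{\cO}_{X',Y'}\otimes_{\^{\cO}_{X,Y}}K(\^X)\cong L\otimes_{K(X)}K(\^X)$. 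By the claim and Hironaka--Matsumura, $\^{\cO}_{X',Y'}$ is a domain, so the left-hand side, being a localization of it, is a domain; but since $\alpha\in K(\^X)$, the right-hand side has zero-divisors whenever $[L:K(X)]>1$. Hence $[L:K(X)]=1$ and $\alpha\in K(X)$.

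The step I expect to require the most care is the bookkeeping with rings of formal functions rather than with honest completed local rings: the normality of $\^{\cO}_{X,Y}$ when $X$ is normal, the identification of $\^{\cO}_{X',Y'}$ with the completion of $f_*\cO_{X'}$ along $Y$ together with its finiteness over $\^{\cO}_{X,Y}$ and the base-change isomorphism above, and the exactness of completion used in the reduction to the normal case. The auxiliary nonvanishing of $H^n$ of a sufficiently negative line bundle on an $n$-dimensional projective variety is elementary but also worth a line; by contrast, the connectedness is the short Mayer--Vietoris computation, and the domain property of $\^{\cO}$ is quoted from Hironaka--Matsumura.
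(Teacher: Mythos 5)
Your overall architecture is sound and the first two thirds are essentially the paper's argument in expanded form: the Mayer--Vietoris computation you give for the connectedness of $f^{-1}(Y)$ is exactly the standard proof of the connectedness theorem the paper quotes from B\u adescu, and the passage from connectedness of $Y$ (over a normal $X$) to $K(\^X)$ being a field is the same appeal to Hironaka--Matsumura. Your endgame for algebraic closedness is genuinely different, though: the paper invokes \cite[Theorem~2.5]{BS02}, which manufactures from the algebraic element a finite cover $f\colon X'\to X$ of degree $>1$ that is \'etale along a copy $Y'$ of $Y$ with $f|_{Y'}\colon Y'\to Y$ an isomorphism, so that $Y'$ is a proper connected component of $f^{-1}(Y)$ and the contradiction is purely topological. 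You instead take $X'$ to be the normalization of $X$ in $L=K(X)(\alpha)$ and aim for an algebraic contradiction: $L\otimes_{K(X)}K(\^X)$ has zero divisors because the minimal polynomial of $\alpha$ splits over $K(\^X)$, yet it should sit inside the domain $K(\^{X'})$. That is a legitimate alternative route (it is closer in spirit to the original Hironaka--Matsumura/Gieseker arguments), and if carried out it avoids the \'etaleness statement entirely.

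The genuine gap is in the object you build this on. You posit a global ring $\^{\cO}_{X,Y}$ of formal functions with $K(\^X)=\operatorname{Frac}(\^{\cO}_{X,Y})$, and your key step --- that $\^{\cO}_{X',Y'}\otimes_{\^{\cO}_{X,Y}}K(\^X)$ is simultaneously a localization of the domain $\^{\cO}_{X',Y'}$ and isomorphic to $L\otimes_{K(X)}K(\^X)$ --- only makes sense relative to that identification. But $K(\^X)$ is by definition the group of global sections of the \emph{sheaf} of total quotient rings of $\cO_{\^X}$, and it is not the total ring of fractions of $\Gamma(\^X,\cO_{\^X})$: already for $Y$ an ample divisor in a smooth $X$ one has $\Gamma(\^X,\cO_{\^X})=k$ by the theorem on formal functions, while $K(\^X)=K(X)$. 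So there is no global ring available whose fraction field is $K(\^X)$, the "finiteness of $\^{\cO}_{X',Y'}$ over $\^{\cO}_{X,Y}$'' has no meaning as stated, and the localization argument collapses. The repair is to work sheaf-theoretically: on an affine $\Spec A\subset X$ with $B$ the normalization of $A$ in $L$, flatness of the $I$-adic completion gives $\^B=B\otimes_A\^A$, nonzerodivisors of $\^A$ remain nonzerodivisors on $\^B$ (embed $B$ into a free $A$-module), whence $L\otimes_{K(X)}Q(\^A)\hookrightarrow Q(\^B)$; gluing these gives an injection of $L\otimes_{K(X)}\cK_{\^X}$ into $\^f_*\cK_{\^{X'}}$, and taking global sections embeds $L\otimes_{K(X)}K(\^X)$ into the field $K(\^{X'})$, which is the contradiction you want. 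This is exactly the "bookkeeping'' you flagged as the delicate point, but it is not routine: it is where the proof as written actually breaks, and the same issue infects your reduction to the normal case (an inclusion $K(\^X)\subset K(\^{X^\nu})$ shows $K(\^X)$ is a domain, but you need it to be a field, which your $\operatorname{Frac}$ identification was silently supplying; the paper instead quotes \cite[Theorem~2.2]{BS02} for an isomorphism $K(\^X)\cong K(\^{X^\nu})$).
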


\begin{proof}
By \cite[Theorem~7.6]{Bad04}, the condition on cohomological dimension
implies that $Y$ is connected. 
In order to prove that $K(\^X)$ is a field and 
$K(X)$ is algebraically closed in $K(\^X)$, 
we may assume without loss of generality that $X$ is normal.
Indeed, if $\pi\colon W\to X$ is the normalization map 
then we see by \cite[Proposition~1.1]{Har68} that 
$\cd(W\setminus \pi^{-1}(Y))= \cd(X\setminus Y)\leq n-2$, so the hypothesis holds on 
the normalization, and if $\^W$ is the formal completion along $\pi^{-1}(Y)$ then 
the canonical map $K(\^X) \to K(\^W)$ is an isomorphism by \cite[Theorem~2.2]{BS02}.
Thus the claimed property can be deduced from the normalization. 

By \cite[Lemma~(1.4)]{HM68}, the connectedness of $Y$ implies that $K(\^X)$ is a field. 
Suppose by contradiction that $K(X)$ is not algebraically closed in $K(\^X)$. 
Then there is an element $\eta \in K(\^X)\setminus K(X)$ that is algebraic over $K(X)$. 
As we are now assuming that $X$ is normal, \cite[Theorem~2.5]{BS02}
implies that there is a finite surjective morphism $f\colon X'\to X$ from a variety $X'$
and a closed subscheme $Y'\subset X'$ such that 
$f$ is \'etale at each point of $Y'$ and its restriction to $Y'$ gives an isomorphism $Y' \to Y$; 
moreover, if $\^{X'}$ is the formal neighborhood of $Y'$ in $X'$ and $\^f \colon \^{X'} \to \^X$
is the morphism of formal schemes induced by $f$, then $\^f^*\eta\in K(X')$.
Note that $X'$ is projective, since $X$ is. 
By Stein factorization and the fact that we are assuming that $X$ is normal, we have $\deg(f)>1$.
The induced map $(X'\setminus f^{-1}(Y))\to (X\setminus Y)$ is finite and surjective, and
therefore $\cd(X'\setminus f^{-1}(Y))= \cd(X\setminus Y)\leq n-2$ by \cite[Proposition~1.1]{Har68}.
Thus $f^{-1}(Y)$ has to be connected, again by \cite[Theorem~7.6]{Bad04}.
As $f^{-1}(Y)$ contains $Y'$ and $f$ is \'etale at all points of $Y'$, it follows that $Y'$ 
must be a connected component in $f^{-1}(Y)$, but this contradicts the connectedness of $f^{-1}(Y)$ since $f$
has degree $> 1$ and its restriction to $Y'$ has degree 1. 
\end{proof}

The condition that $K(\^X)$ is a field and $K(X)$ is algebraically closed in $K(\^X)$ was studied 
by B\u adescu and Schneider in \cite{BS02}, where they prove that it is 
equivalent to the geometric condition saying 
that for any proper surjective morphism of projective varieties $g\colon V\to X$, the set $g^{-1}(Y)$ is connected.
This condition can also be regarded as an orthogonal condition to $G2$, 
in the sense that the two conditions together are equivalent to $G3$.

Our next result relates the condition that $K(\^X)$ is a field and $K(X)$ is algebraically closed in $K(\^X)$
to properties of the restriction map on Picard groups.

\begin{proposition}
\label{prop:tf-K(^X)}
Let $X$ be a projective variety over an algebraically closed field $k$ of arbitrary characteristic, 
and let $Y\subset X$ be a positive-dimensional closed subscheme. 
Assume that $K(\^X)$ is a field and $K(X)$ is algebraically closed in $K(\^X)$ (e.g., $Y$ is $G3$ in $X$).
Then:
\begin{enumerate}
\item
\label{item:Pic-ker-tf}
The kernel of the map $\Pic(X)\to\Pic(Y)$ is torsion free if $\charK k = 0$, 
and its torsion is killed by a power of $p$ if $\charK k = p > 0$. 
\item
\label{item:Pic0-inj}
The map $\Pic^0(X)\to\Pic^0(Y)$ is injective if $\charK k = 0$, 
and has finite kernel of order a power of $p$ if $\charK k = p > 0$.
\item
\label{item:Alb-onto}
The map $\Alb(Y) \to \Alb(X)$ is surjective, and has connected fibers if $\charK k = 0$. 
\end{enumerate}
\end{proposition}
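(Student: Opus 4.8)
The plan is to deduce all three assertions from the geometric reformulation of the hypothesis recalled just above (due to B\u{a}descu--Schneider \cite{BS02}): it is equivalent to asking that $g^{-1}(Y)$ be connected for every proper surjective morphism of projective varieties $g\colon V\to X$; in particular $Y$ is connected, also by \cite[Lemma~(1.4)]{HM68}. As in the proof of \cref{prop:tf}, this hypothesis is preserved by passing to a finite surjective cover of $X$ (via \cite[Proposition~1.1]{Har68} and \cite[Theorem~2.2]{BS02}), so I would freely reduce to the case where $X$ is normal.

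For \eqref{item:Pic-ker-tf}: let $L$ be a nonzero torsion class in $\ker(\Pic(X)\to\Pic(Y))$, of exact order $m$; since $k$ is algebraically closed we may, after replacing $L$ by a power, assume $m$ is prime to $\ch(k)$ (the $p$-primary part is treated below). A choice of isomorphism $L^{\otimes m}\cong\O_X$ makes $\cA:=\bigoplus_{i=0}^{m-1}L^{\otimes i}$ a sheaf of $\O_X$-algebras, and $f\colon X':=\BroadSpec_X\cA\to X$ is a finite \'etale cyclic cover of degree $m$; since $m$ is the exact order of $L$, $X'$ is connected, hence (being \'etale over the normal variety $X$) an integral projective variety. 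By the B\u{a}descu--Schneider criterion, $f^{-1}(Y)$ is connected. But $f^{-1}(Y)=\BroadSpec_Y\big(\bigoplus_{i=0}^{m-1}(L|_Y)^{\otimes i}\big)$, and because $L|_Y\cong\O_Y$ while $\Gamma(Y,\O_Y)$ is a local Artinian $k$-algebra with residue field $k$ in which the unit encoding the trivialization of $(L|_Y)^{\otimes m}$ admits an $m$-th root ($m$ being invertible), this scheme is a disjoint union of $m\ge 2$ copies of $Y$. This contradiction forces $L=0$.

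I would then derive \eqref{item:Pic0-inj} and \eqref{item:Alb-onto} from \eqref{item:Pic-ker-tf} via the structure of commutative algebraic groups. The kernel of $\Pic^0(X)\to\Pic^0(Y)$ (the restriction of $\Pic(X)\to\Pic(Y)$) is a subgroup scheme of $\Pic^0(X)$ whose group of $k$-points is torsion free by \eqref{item:Pic-ker-tf}; since, for $X$ normal, $\Pic^0(X)$ is a semi-abelian variety (taking the reduction of the Picard scheme if necessary), and every nonzero connected subgroup of a semi-abelian variety has nonzero torsion points, this kernel has no nonzero $k$-point, which is \eqref{item:Pic0-inj}. For \eqref{item:Alb-onto}, the image of $\Alb(Y)\to\Alb(X)$ is an abelian subvariety $B\subseteq\Alb(X)$; if $B\subsetneq\Alb(X)$, then pulling back line bundles along $X\xrightarrow{\alpha_X}\Alb(X)\to\Alb(X)/B$ embeds the (nonzero) dual abelian variety of $\Alb(X)/B$ into $\Pic^0(X)$, and as this morphism contracts $Y$ to a point, the image lies in $\ker(\Pic^0(X)\to\Pic^0(Y))$, producing torsion points and contradicting \eqref{item:Pic-ker-tf}. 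Hence $\Alb(Y)\to\Alb(X)$ is surjective; the same torsion argument applied to $\pi_0$ of its kernel (which sits dually inside $\ker(\Pic^0(X)\to\Pic^0(Y))$ if nontrivial) then gives connectedness of the fibres.

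The step I expect to be the main obstacle is the $p$-primary torsion in positive characteristic in \eqref{item:Pic-ker-tf}: there the cyclic cover is an fppf $\mu_{p^a}$-torsor, and since $\mu_{p^a}$ is connected in characteristic $p$ it stays connected over $Y$ even when split there, so the disconnectedness obstruction above disappears and a genuinely different argument is needed. A second, milder difficulty is keeping track of the behaviour under normalization throughout --- both in guaranteeing that $X'$ is irreducible in \eqref{item:Pic-ker-tf} and in the group-theoretic identifications for \eqref{item:Pic0-inj}--\eqref{item:Alb-onto} --- since restriction of line bundles need not be compatible with normalization in general.
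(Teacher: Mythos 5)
Your proposal follows the same route as the paper's proof. For part (1) the paper argues exactly as you do: given a nontrivial torsion $\cL$ of exact order $m$ in the kernel, it forms the cyclic cover $\~X=\BroadSpec_X\bigl(\bigoplus_{i=0}^{m-1}\cL^{-i}\bigr)$, computes $H^0(\~X,\O_{\~X})\cong H^0(X,\O_X)$ (using that a nontrivial torsion line bundle on the integral projective $X$ has no nonzero sections) to conclude that $\~X$ is an integral projective variety, observes that $\cL|_Y\cong\O_Y$ forces $\pi^{-1}(Y)$ to be a disjoint union of $m\ge 2$ copies of $Y$, and contradicts the connectedness theorem \cite[Theorem~2.7]{BS02}. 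Two remarks on the details. First, the paper does \emph{not} reduce to normal $X$ in this proposition (that reduction is only performed in \cref{prop:tf}); integrality of $\~X$ is extracted from the $H^0$ computation, which sidesteps the compatibility-with-normalization issue you flag at the end. Second, and more importantly, the $p$-primary case that you single out as requiring ``a genuinely different argument'' is not treated differently in the paper: the same construction is applied for every $m$, with the cover simply asserted to be \'etale and $\pi^{-1}(Y)$ asserted to split into $m$ copies. As you correctly observe, both assertions use that $m$ is invertible in $k$; when $p\mid m$ the cover is an fppf $\mu_{p^a}$-torsor, a homeomorphism on underlying spaces, and no disconnection of $\pi^{-1}(Y)$ occurs. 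So you have not missed an idea that is present in the paper, but on its own terms your proposal establishes part (1) only for torsion of order prime to the characteristic, and in positive characteristic this remains a genuine gap that your write-up acknowledges rather than closes.

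Parts (2) and (3) of your proposal coincide in substance with the paper's: (2) is deduced from (1) via the observation that a nontrivial kernel of a homomorphism of abelian (or semi-abelian) varieties contains torsion points, and (3) is the dual statement. The paper phrases (3) through Poincar\'e reducibility --- choose a complement $C$ of $B=\Pic^0(X)$ inside $A=\Pic^0(Y)$ so that $B\times C\to A$ is an isogeny, dualize to obtain surjectivity of $\Alb(Y)\to\Alb(X)$, and rule out disconnected fibers by factoring through an isogeny of degree $>1$ and dualizing again --- whereas you pull back line bundles from $\Alb(X)/B$ to produce torsion in the kernel; these are the same argument read in the two dual directions, and either is fine.
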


\begin{proof}
To prove \eqref{item:Pic-ker-tf}, we argue by contradiction
and suppose there is a non-trivial, torsion line bundle $\mathcal{L}\in \Pic(X)$ such that
$\mathcal{L}|_Y\cong \O_Y$. Let $m$ be the least positive integer such that $\mathcal{L}^{\otimes m}\cong \O_X$
and assume that $m$ is coprime to $p$ if $\charK k = p > 0$.
We may take the \'etale cyclic Galois cover $\pi\colon \~X\to X$ associated with 
$\mathcal{L}$ and the section $1\in\Gamma(\O_X)\cong\Gamma(\mathcal{L}^{\otimes m})$.
Since 
\[
H^{0}(\~X,\O_{\~X})\cong H^{0}(X,\pi_*\O_{\~X})
\cong H^{0}(X,\O_X\oplus \mathcal{L}^{-1}\oplus\cdots\oplus\mathcal{L}^{-m+1})\cong H^{0}(X,\O_X),
\]
$\~X$ is connected.
Therefore, $\~X$ is irreducible and reduced.
By contrast, the fact that $\mathcal{L}|_Y\cong \O_Y$ implies that $\pi^{-1}(Y)$ consists of 
exactly $m$-copies of $Y$. As $m \ge 2$, this contradicts \cite[Theorem~2.7]{BS02}
which implies that $\p^{-1}(Y)$ must be connected. 

Since a positive dimensional kernel of a morphism
between abelian varieties contains $m$-torsion for every positive integer $m$, 
\eqref{item:Pic0-inj} follows from \eqref{item:Pic-ker-tf}. 

To finish the proof, we deduce \eqref{item:Alb-onto} from \eqref{item:Pic0-inj}. 
Denote for short $A := \Pic^0(Y)$ and $B := \Pic^0(X)$, 
so that $A^\vee = \Alb(Y)$ and $B^\vee = \Alb(X)$. 
Let $G$ denote the kernel of $B \to A$, and denote $\ov B = B/G$. We factor the map $B \to A$ as
the composition of the isogeny $B \to \ov B$ and the injective homomorphism $\ov B \to A$. 
We obtain the decomposition of the dual map $A^\vee \to B^\vee$ 
as the composition of the homomorphisms $A^\vee \to \ov B^\vee$ and 
$\ov B^\vee \to B^\vee$. 
By the dual exact sequence, $\ov B^\vee \to B^\vee$ is a isogeny, 
and Poincar\'e's reducibility theorem implies that $A^\vee \to \ov B^\vee$ is surjective. 
To see this last property, we apply
Poincar\'e's reducibility theorem to find an abelian subvariety $C \subset A$
such that the map $\ov B \times C \to A$ given by $(\ov b,c) \mapsto \ov b+c$ is an isogeny;
dualizing, we obtain an isogeny $A^\vee \to (\ov B \times C)^\vee$, and since
the inclusion $\ov B \subset \ov B \times C$ gives a surjection $(\ov B \times C)^\vee \to \ov B^\vee$, 
the resulting map $A^\vee \to \ov B^\vee$ is surjective. 
Therefore, the dual map $A^\vee \to B^\vee$ is surjective.

Regarding the last assertion in \eqref{item:Alb-onto}, assume that $\charK k = 0$. 
Note that in this case $G = \{0\}$. 
Let $K$ be the kernel of $A^\vee \to B^\vee$, and let $K_0 \subset K$ be the connected component through the origin.
If $A^\vee \to B^\vee$ is not a morphism with connected fibers, then $K_0 \ne K$, and hence the map
factors as $A^\vee \to A^\vee/K_0 \to B^\vee$ where the last arrow is an isogeny
of degree $> 1$. Dualizing, this contradicts the fact that the map $B \to A$ is injective. 
\end{proof}

We will use the following corollary, which is a direct consequence of
\cref{prop:tf,prop:tf-K(^X)}.

\begin{corollary}
\label{cor:tf}
Let $X$ be an $n$-dimensional projective variety over an algebraically closed field $k$
and $Y\subset X$ a positive-dimensional closed subscheme. 
Assume that $\cd(X \setminus Y) < n-1$.
Then:
\begin{enumerate}
\item
\label{item:Pic-ker-tf}
The kernel of the map $\Pic(X)\to\Pic(Y)$ is torsion free if $\charK k = 0$, 
and its torsion is killed by a power of $p$ if $\charK k = p > 0$. 
\item
\label{item:Pic0-inj}
The map $\Pic^0(X)\to\Pic^0(Y)$ is injective if $\charK k = 0$, 
and has finite kernel of order a power of $p$ if $\charK k = p > 0$.
\item
\label{item:Alb-onto}
The map $\Alb(Y) \to \Alb(X)$ is surjective, and has connected fibers if $\charK k = 0$.
\end{enumerate}
\end{corollary}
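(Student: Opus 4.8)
The plan is to deduce \cref{cor:tf} immediately from the two preceding propositions, with essentially no new work. First I would invoke \cref{prop:tf}, whose hypothesis is exactly $\cd(X \setminus Y) < n-1$ together with $Y$ positive-dimensional: this gives that $K(\^X)$ is a field and that $K(X)$ is algebraically closed in $K(\^X)$. That is precisely the standing hypothesis of \cref{prop:tf-K(^X)}, so I would then apply \cref{prop:tf-K(^X)} to obtain conclusions \eqref{item:Pic-ker-tf}, \eqref{item:Pic0-inj}, \eqref{item:Alb-onto} verbatim.

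The only point requiring a word of care is that \cref{prop:tf} is stated over an algebraically closed field of arbitrary characteristic, and so is \cref{prop:tf-K(^X)}; since \cref{cor:tf} assumes the same, there is no compatibility gap. Likewise the assumption that $Y$ is positive-dimensional is common to all three statements, which is needed for $K(\^X)$ to be a field (the connectedness of $Y$ coming from \cite[Theorem~7.6]{Bad04} and then \cite[Lemma~(1.4)]{HM68}). So the proof is literally two sentences chaining the implications.

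I do not anticipate any obstacle here: the corollary is a bookkeeping consequence, and the substantive content lives entirely in \cref{prop:tf} (the normalization reduction and the degree argument contradicting connectedness of $f^{-1}(Y)$) and in \cref{prop:tf-K(^X)} (the cyclic cover argument against \cite[Theorem~2.7]{BS02}, plus the Poincaré reducibility manipulation for the Albanese statement). The text already flags this: ``a direct consequence of \cref{prop:tf,prop:tf-K(^X)}.''

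\begin{proof}
By \cref{prop:tf}, the assumption $\cd(X \setminus Y) < n-1$ (together with the hypothesis that $Y$ is positive-dimensional) implies that $K(\^X)$ is a field and that $K(X)$ is algebraically closed in $K(\^X)$. This is precisely the hypothesis of \cref{prop:tf-K(^X)}, so all three conclusions follow from that proposition.
\end{proof}
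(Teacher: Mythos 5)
Your proof is correct and is exactly the intended argument: the paper itself states the corollary is ``a direct consequence of'' \cref{prop:tf,prop:tf-K(^X)}, and your two-step chaining (first \cref{prop:tf} to get that $K(\^X)$ is a field with $K(X)$ algebraically closed in it, then \cref{prop:tf-K(^X)} for the three conclusions) is precisely that deduction. Nothing is missing.
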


We finish this section with two applications of \cref{prop:tf-K(^X)} 
which are of independent interest.

A theorem attributed by Hartshorne to Matsumura (cf.\ \cite[Exercise~III.4.15]{Har70})
states that if $X$ is a positive-dimensional smooth proper variety
and $Y \subset X$ is a smooth closed subvariety with ample normal bundle then 
the induced map $\Alb(Y) \to \Alb(X)$ is surjective. 
As we already mentioned, over an algebraically closed field of characteristic zero, 
a smooth subvariety with ample normal bundle is $G2$.
The following result can be seen as a strengthened version of Matsumura's result.

\begin{corollary}
	\label{cor:G2}
Let $X$ be a normal projective variety over an algebraically closed field of arbitrary characteristic, 
and let $Y\subset X$ be a positive-dimensional connected closed subscheme. 
Assume that $Y$ is $G2$ in $X$.
Then the induced map $\Alb(Y)\to \Alb(X)$ is surjective.	
\end{corollary}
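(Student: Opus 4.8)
The plan is to reduce the statement to the situation covered by \cref{prop:tf-K(^X)} --- where $K(\^X)$ is a field and $K(X)$ is algebraically closed in it, e.g.\ where $Y$ is $G3$ --- by passing to a finite cover of $X$. Since $Y$ is connected, $K(\^X)$ is a field by \cite[Lemma~(1.4)]{HM68}, and the assumption that $Y$ is $G2$ in $X$ says precisely that $K(\^X)$ is a finite, hence algebraic, extension of $K(X)$; the only obstruction to applying \cref{prop:tf-K(^X)} directly is that $K(X)$ need not be algebraically closed in $K(\^X)$. To remove it, I would use the normality of $X$ together with the construction of B\u adescu and Schneider \cite[Theorem~2.5]{BS02} --- the same tool used in the proof of \cref{prop:tf}, now applied (iteratively, if necessary) to a finite set of generators of the extension $K(\^X)/K(X)$ --- to produce a finite surjective morphism $f\colon X'\to X$ from a projective variety $X'$ and a closed subscheme $Y'\subset X'$ such that $f$ is \'etale at every point of $Y'$, the restriction $f|_{Y'}\colon Y'\to Y$ is an isomorphism, and, under the induced morphism of formal completions $\^f\colon\^{X'}\to\^X$, the field $K(X')$ is carried to the algebraic closure of $K(X)$ inside $K(\^X)$. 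Because $f$ is \'etale along $Y'$ and restricts to an isomorphism there, $\^f$ is an isomorphism (the formal completion of $X$ along $Y$ depends only on an \'etale neighborhood of $Y$), so $K(\^{X'})\cong K(\^X)$; and since $K(\^X)$ is algebraic over $K(X)$, the algebraic closure in question is all of $K(\^X)$. Hence $K(X')=K(\^{X'})$, i.e.\ $Y'$ is $G3$ in $X'$. If one prefers $X'$ to be normal, one may replace it by its normalization, which changes neither $K(X')$ nor $K(\^{X'})$ (by \cite[Theorem~2.2]{BS02}) nor the copy of $Y$ sitting in the \'etale locus.

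With such a cover in hand, I would conclude as follows. The subscheme $Y'\cong Y$ is positive-dimensional and connected, so \cref{prop:tf-K(^X)} applies to the pair $(X',Y')$ and gives that $\Alb(Y')\to\Alb(X')$ is surjective. As $f$ is surjective, $\Alb(X')\to\Alb(X)$ is surjective too: a surjective morphism of projective varieties induces a surjective homomorphism of Albanese varieties, since otherwise its image would be a proper abelian subvariety $A\subsetneq\Alb(X)$ and the composite $X\to\Alb(X)\to\Alb(X)/A$ would become constant after precomposition with the surjection $f$, hence would be constant, forcing the quotient homomorphism $\Alb(X)\to\Alb(X)/A$ to vanish by the universal property of the Albanese --- impossible as $A\ne\Alb(X)$. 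Finally, by functoriality of the Albanese, the composite
\[
\Alb(Y)\;\cong\;\Alb(Y')\;\lra\;\Alb(X')\;\lra\;\Alb(X)
\]
agrees, up to translation, with the natural homomorphism $\Alb(Y)\to\Alb(X)$ induced by the inclusion $Y\subset X$, and being a composition of surjections it is surjective.

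The step I expect to be the main obstacle is the first one: extracting from \cite{HM68,BS02} the finite cover $f\colon X'\to X$ carrying exactly the package of properties needed --- finite and surjective, \'etale along $Y'$, restricting to an isomorphism $Y'\cong Y$, and with $K(X')$ equal to the algebraic closure of $K(X)$ in $K(\^X)$ under $\^f$. The remaining points --- that $\^f$ is an isomorphism and therefore $Y'$ is $G3$ in $X'$, and that Albanese maps are surjective along surjective (and along finite) morphisms --- are standard and hold in every characteristic, so they pose no serious difficulty; one should only bear in mind that the Albanese varieties occurring here are those of possibly singular projective varieties.
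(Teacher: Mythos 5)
Your proof is correct and follows essentially the same route as the paper: pass to a finite surjective cover $f\colon X'\to X$, \'etale along $Y'\cong Y$, on which $Y'$ becomes $G3$, apply \cref{prop:tf-K(^X)} to $(X',Y')$, and descend via the surjectivity of Albanese maps under surjective morphisms. The only difference is one of sourcing: the paper obtains the cover in one stroke from the proof of \cite[Theorem~4.3]{Gie77}, while you reconstruct it by iterating \cite[Theorem~2.5]{BS02} over generators of the finite extension $K(\^X)/K(X)$ — which is the same underlying construction.
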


\begin{proof}
By the proof of \cite[Theorem~4.3]{Gie77}, there is a finite surjective map $f\colon X'\to X$ 
and a closed subscheme $Y'\subset X'$, such that $Y'$ is $G3$ in $X'$, 
$f$ is \'etale at each point of $Y'$, and the restriction of $f$ to $Y'$ gives an isomorphism $Y' \to Y$. 
By \cref{prop:tf-K(^X)}, the map $\Alb(Y')\to \Alb(X')$ is surjective.
Since $f$ is surjective, $\Alb(X')\to \Alb(X)$ is surjective, and therefore 
$\Alb(Y)\to \Alb(X)$ is also surjective.
\end{proof}	

It is well known that, with the sole exception of $\P^n$, the diagonal never has ample normal 
bundle inside the self-product $X\times X$ of a 
smooth projective variety $X$, and it follows by Ottem's Lefschetz hyperplane theorem 
\cite[Corollary~5.3]{Ott12}
that in fact $\P^1$ is the only variety for which the diagonal is an ample subvariety of $X \times X$. 
In \cite[Theorem~2]{BS96}, B\u{a}descu and Schneider proved that for any rational homogeneous space $X$ over $\C$,
the diagonal $\Delta_X$ is $G3$ in $X\times X$.	
In \cite[Proposition~3.10]{Hal19}, Halic showed that 
for a smooth projective variety $X$ with non-pseudoeffective cotangent bundle over an algebraically closed field,
the diagonal is $G2$. In the opposite direction, we obtain the following property.

\begin{corollary}
\label{cor:diagonal}
	Let $X$ be a projective variety with nontrivial Albanese variety, and 
	let $\^{X\times X}$ denote the completion of $X \times X$ along the diagonal $\Delta_X$. 
	Then $K(X\times X)$ is not algebraically closed in $K(\widehat{X\times X})$ and $\Delta_X$ is not $G2$ in $X\times X$.
\end{corollary}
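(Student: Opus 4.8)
The plan is to apply \cref{prop:tf-K(^X)} to the embedding $\Delta_X \subset X \times X$ and derive a contradiction with the hypothesis that $\Alb(X) \neq 0$. First I would recall that the diagonal $\Delta_X \cong X$ is connected and positive-dimensional (since $\Alb(X)\neq 0$ forces $\dim X \geq 1$). Now suppose, for contradiction, that $K(X\times X)$ is algebraically closed in $K(\widehat{X\times X})$ and $K(\widehat{X\times X})$ is a field (the two conditions appearing in the hypothesis of \cref{prop:tf-K(^X)}); in particular this would follow if $\Delta_X$ were $G3$ in $X\times X$, so it suffices to treat this case to rule out both assertions simultaneously. Under that assumption, part \eqref{item:Alb-onto} of \cref{prop:tf-K(^X)} gives that the natural map $\Alb(\Delta_X) \to \Alb(X\times X)$ is surjective.

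The key observation is that this surjectivity fails whenever $\Alb(X)$ is nontrivial. Indeed, $\Alb(X \times X) \cong \Alb(X) \times \Alb(X)$, while $\Alb(\Delta_X) \cong \Alb(X)$, and under these identifications the map induced by the closed embedding $\Delta_X \hookrightarrow X\times X$ is the diagonal morphism $\Alb(X) \to \Alb(X)\times\Alb(X)$, $a \mapsto (a,a)$ (this is just functoriality of the Albanese applied to the two projections, whose composites with $\Delta_X \hookrightarrow X\times X$ are both the identity of $X$). The diagonal embedding of an abelian variety into its square is never surjective once the abelian variety is nonzero — for instance, its image has dimension $\dim\Alb(X) < 2\dim\Alb(X)$. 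This contradicts the surjectivity obtained above, so the assumption must be false: $K(X\times X)$ is not algebraically closed in $K(\widehat{X\times X})$.

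For the second assertion, I would invoke \cref{prop:tf} (equivalently \cref{th:Speiser}, via \cite{BS02}): the condition ``$K(\widehat{W})$ is a field and $K(W)$ is algebraically closed in $K(\widehat{W})$'' is precisely the orthogonal complement of $G2$ inside $G3$, as recalled in the text after the proof of \cref{prop:tf}. Concretely, if $\Delta_X$ were $G2$ in $X\times X$, then since by \cref{prop:tf-K(^X)} it is already known that $G2$ alone does \emph{not} give surjectivity of Albanese maps, I instead argue directly: \cref{cor:G2} shows that $G2$-ness of $\Delta_X$ in $X\times X$ (with $X\times X$ normal, being smooth-by-smooth, hence normal) would imply $\Alb(\Delta_X)\to\Alb(X\times X)$ is surjective, and we have just seen this is impossible. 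Hence $\Delta_X$ is not $G2$ in $X\times X$.

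The only genuinely substantive point — and the one I would write out carefully — is the identification of the map $\Alb(\Delta_X)\to\Alb(X\times X)$ with the diagonal embedding $\Alb(X)\to\Alb(X)^{2}$ and the elementary fact that this is not surjective; everything else is a direct citation of \cref{prop:tf,prop:tf-K(^X),cor:G2}. I do not expect any real obstacle here, since the structure of the Albanese of a product and its functoriality under the two projections are standard; the subtlety, if any, is purely bookkeeping about which of the two stated conclusions (``not algebraically closed'' versus ``not $G2$'') follows from which earlier result, and I would handle both uniformly through \cref{cor:G2} together with the Albanese computation.
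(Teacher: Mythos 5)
Your proposal is correct and follows essentially the same route as the paper: the induced map $\Alb(\Delta_X)\to\Alb(X\times X)\cong\Alb(X)\times\Alb(X)$ is the diagonal, hence not surjective when $\Alb(X)\neq 0$, and the two assertions then follow by contradiction from \cref{prop:tf-K(^X)}\eqref{item:Alb-onto} and \cref{cor:G2} respectively. One small slip: you justify the normality of $X\times X$ (needed for \cref{cor:G2}) by saying it is ``smooth-by-smooth,'' but $X$ is only assumed to be a projective variety here, so this point deserves a more careful treatment (e.g.\ passing to normalizations), a subtlety the paper's own one-line proof also leaves implicit.
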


\begin{proof}
	As $\Alb(X\times X)\cong \Alb(X)\times\Alb(X)$, 
	the induced map $\Alb(\Delta_X)\to \Alb(X\times X)$ cannot be surjective,
	hence the \lcnamecref{cor:diagonal} follows from \cref{prop:tf-K(^X)} and \cref{cor:G2}.
\end{proof}

\section{A Grothendieck--Lefschetz theorem}
\label{s:Pic}

We now focus on the case of characteristic zero. 
The purpose of this section is to establish a Grothendieck--Lefschetz type theorem 
in the context of ample subvarieties. 

When $Y \subset X$ is an ample divisor, one 
can deduce the Grothendieck--Lefschetz theorem directly from 
the Lefschetz hyperplane theorem with integral coefficients and Kodaira vanishing 
(e.g., see \cite[Example~3.1.25]{Laz04}), 
and the case where the subvariety is defined by the vanishing 
of a regular section of an ample vector bundle is also well understood thanks
to the Lefschetz--Sommese theorem \cite[Lemma~A]{Som76} (see also \cite[Section~7.1.A]{Laz04}). 
We should remark that Grothendieck's proof \cite[Corollary~XII.3.6]{SGA2}, 
which applies to the case where $Y$ is any effective ample divisor,
used different methods that do not require $Y$ to be reduced
(see \cite[Chapter~IV]{Har70} and \cite[Remark~3.1.26]{Laz04} for a discussion).
However, Grothendieck's proof relies on the fact that, being an ample divisor, 
$Y$ satisfies the effective Lefschetz condition $\Leff(X,Y)$, 
which is no longer the case in general if $Y$ is an ample subvariety of codimension $\ge 2$
(see \cref{eg:veronese} below). 
Note also that we cannot rely on Kodaira vanishing when $Y$ has higher codimension
(cf.\ \cref{r:q-ampleness-not-open-mix-char}).

To deal with the case where $Y$ is an ample subvariety of arbitrary codimension, 
we will use the Lefschetz hyperplane theorem 
with rational coefficients from \cite{Ott12} in combination with \cref{cor:tf}, and 
assume that $Y$ is smooth so that we can rely on Hodge theory in place of Kodaira vanishing.

We start by recalling Ottem's result.

\begin{theorem}[\protect{\cite[Section~5]{Ott12}}]
\label{th:Lefschetz}
Let $X$ be an $n$-dimensional smooth complex projective variety, 
and let $Y \subset X$ be a smooth subvariety such that $\cd(X\setminus Y)= q$. Then the map
$H^i(X,\Q)\to H^i(Y,\Q)$ is an isomorphism for $i < n -1-q$ and is injective for $i = n-1-q$.
\end{theorem}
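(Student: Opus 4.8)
The plan is to reduce the assertion to a vanishing statement for the singular cohomology of the complement $U := X\setminus Y$ in the top degrees, and then to extract that vanishing from the hypothesis on coherent cohomological dimension by way of algebraic de Rham theory. Throughout, $U$ is a smooth quasi-projective variety of dimension $n$, and I write $q := \cd(U)$; recall that $q\le n-1$ by the Lichtenbaum theorem, so the statement is non-vacuous.

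First I would run the long exact sequence of the pair $(X,Y)$ in rational cohomology,
\[
\cdots \to H^i(X,Y;\Q) \to H^i(X;\Q) \to H^i(Y;\Q) \to H^{i+1}(X,Y;\Q) \to \cdots,
\]
from which one reads off that it suffices to prove $H^i(X,Y;\Q) = 0$ for all $i \le n-1-q$. Since $X$ is compact and $Y = X\setminus U$ is closed, the relative cohomology $H^i(X,Y;\Q)$ is canonically identified with the compactly supported cohomology $H^i_c(U;\Q)$ (equivalently, $H^i(X,Y;\Q)\cong\widetilde H^i(X/Y;\Q)$ and $X/Y$ is the one-point compactification of $U$); and by Poincar\'e--Lefschetz duality on the oriented $2n$-manifold $U$ one has $H^i_c(U;\Q) \cong H^{2n-i}(U;\Q)^\vee$. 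Thus the theorem reduces to showing
\[
H^k(U;\Q) = 0 \fall k > n + q.
\]

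For this last vanishing I would invoke Grothendieck's algebraic de Rham comparison theorem: as $U$ is smooth over $\C$, it identifies $H^k(U;\C)$ with the hypercohomology $\mathbb{H}^k(U, \Omega^\bullet_U)$ of the algebraic de Rham complex $\O_U \to \Omega^1_U \to \cdots \to \Omega^n_U$. Its hypercohomology spectral sequence
\[
E_1^{p,j} = H^j(U, \Omega^p_U) \ \Longrightarrow\ \mathbb{H}^{p+j}(U, \Omega^\bullet_U)
\]
has $E_1^{p,j} = 0$ for $p > n$ (the complex has length $n$) and, crucially, $E_1^{p,j} = 0$ for $j > q$, since each $\Omega^p_U$ is coherent and $\cd(U) = q$. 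Hence $\mathbb{H}^i(U, \Omega^\bullet_U) = 0$ for $i > n+q$, so $H^i(U;\C) = 0$ and therefore $H^i(U;\Q) = 0$ in that range, which combines with the reduction above to finish the proof. This is in essence the argument of \cite[Section~5]{Ott12}, which in turn rests on Hartshorne's circle of results tying coherent cohomological dimension to the topology of the complement (see \cite[Chapter~III]{Har70}).

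The only genuinely substantive step — and the one where characteristic zero and rational (as opposed to integral) coefficients are really needed — is the passage from the algebraic hypothesis $\cd(X\setminus Y) = q$, a statement about vanishing of coherent cohomology, to the topological vanishing $H^k(X\setminus Y;\Q) = 0$ for $k > n+q$; this is precisely what the de Rham comparison supplies, at the cost of discarding all information about torsion in $H^\bullet(X\setminus Y;\Z)$ (and it is exactly such torsion that makes the integral Lefschetz statement fail for ample subvarieties). Everything else in the argument is formal homological algebra together with standard manifold topology.
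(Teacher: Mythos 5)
Your argument is correct and is essentially the proof in the cited source: the paper itself gives no proof of \cref{th:Lefschetz}, quoting it from \cite[Section~5]{Ott12}, and Ottem's argument is exactly your reduction via the pair sequence, the identification $H^i(X,Y;\Q)\cong H^i_c(X\setminus Y;\Q)$, Poincar\'e duality, and Hartshorne's use of the algebraic de Rham (Hodge-to-de Rham) spectral sequence to convert $\cd(X\setminus Y)=q$ into $H^k(X\setminus Y;\Q)=0$ for $k>n+q$.
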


As in the classical setting, one deduces restriction properties at the
level of Picard groups. Because of the rational coefficients, 
torsion is possible, and we will use \cref{cor:tf} to control
the torsion in the kernel. Note, by contrast, that as shown by simple examples
(see \cref{eg:veronese}), torsion can appear in the cokernel.

\begin{theorem}
\label{l:Pic-iso}
Let $X$ be an $n$-dimensional 
smooth projective variety defined over an algebraically closed field of characteristic zero, 
and let $Y \subset X$ be a smooth closed subvariety. 
Assume that $\cd(X\setminus Y)\leq n-3$
(e.g., $Y$ is an ample subvariety of dimension $\ge 2$). Then:
\begin{enumerate}
\item\label{Pic0} 
The canonical map $\Pic^{0}(X)\to\Pic^{0}(Y)$ is an isomorphism. 
\item\label{NS} 
The restriction map $\NS(X)\to \NS(Y)$ is injective.
\item\label{Pic} 
The restriction map $\Pic(X)\to \Pic(Y)$ is injective.
\end{enumerate}
Moreover, if $\cd(X\setminus Y)\leq n-4$ (e.g., $Y$ is an ample subvariety of dimension $\ge 3$), then
the maps in \eqref{NS} and \eqref{Pic} have finite cokernel.
\end{theorem}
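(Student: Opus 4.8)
The plan is to combine the rational-coefficient Lefschetz theorem (Theorem~\ref{th:Lefschetz}) with the torsion-freeness statement (Corollary~\ref{cor:tf}), exploiting the exponential-sequence / Hodge-theoretic description of $\Pic$. First I would reduce to the complex case: since $X$, $Y$ and the embedding are finitely presented, a standard spreading-out and base-change argument (Lefschetz principle) shows that the injectivity of $\Pic(X)\to\Pic(Y)$ and the finiteness of its cokernel can be checked after extending the base field to $\C$, so I assume $X$ is a smooth complex projective variety throughout. Note that the hypothesis $\cd(X\setminus Y)\le n-3$ gives in particular $\cd(X\setminus Y)<n-1$, so Corollary~\ref{cor:tf} applies: the kernel of $\Pic(X)\to\Pic(Y)$ is torsion-free and $\Pic^0(X)\to\Pic^0(Y)$ is injective.

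Next I would prove \eqref{Pic0}. By Theorem~\ref{th:Lefschetz} with $q=\cd(X\setminus Y)\le n-3$, the restriction $H^1(X,\Q)\to H^1(Y,\Q)$ is an isomorphism, hence $H^1(X,\cO_X)\to H^1(Y,\cO_Y)$ is an isomorphism of the corresponding Hodge pieces (the restriction map is a morphism of Hodge structures and $h^{1,0}(X)=h^{1,0}(Y)$, $h^{0,1}$ likewise). Therefore the tangent spaces of $\Pic^0(X)$ and $\Pic^0(Y)$ have equal dimension, so $\dim\Pic^0(X)=\dim\Pic^0(Y)$; combined with the injectivity of $\Pic^0(X)\to\Pic^0(Y)$ from Corollary~\ref{cor:tf}, this forces $\Pic^0(X)\to\Pic^0(Y)$ to be an isogeny that is injective, i.e.\ an isomorphism. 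For \eqref{NS}, I would use that $\NS(X)\hookrightarrow H^2(X,\Z)/(\text{torsion})$ and that, by Theorem~\ref{th:Lefschetz} (now for $i=2<n-1-q$ when $q\le n-4$, or $i=2=n-1-q$ when $q=n-3$), the map $H^2(X,\Q)\to H^2(Y,\Q)$ is at least injective; a class in $\ker(\NS(X)\to\NS(Y))$ maps to a torsion class in $H^2(Y,\Z)$ and to zero in $H^2(Y,\Q)$, hence is zero in $H^2(X,\Q)$, hence torsion in $\NS(X)$; but $\NS(X)$ is finitely generated and a torsion element of $\ker(\Pic(X)\to\Pic(Y))$ is trivial by Corollary~\ref{cor:tf}, so it is $0$. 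Then \eqref{Pic} follows by combining \eqref{Pic0} and \eqref{NS} with the exact sequence $0\to\Pic^0\to\Pic\to\NS\to 0$ and the snake lemma: the kernel of $\Pic(X)\to\Pic(Y)$ injects into $\ker(\NS(X)\to\NS(Y))=0$.

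For the cokernel statement, assume $q=\cd(X\setminus Y)\le n-4$, so that Theorem~\ref{th:Lefschetz} gives an \emph{isomorphism} $H^2(X,\Q)\xrightarrow{\ \sim\ } H^2(Y,\Q)$. Any line bundle $\cM\in\Pic(Y)$ has a first Chern class $c_1(\cM)\in H^2(Y,\Z)$, whose image in $H^2(Y,\Q)$ comes from a unique class in $H^2(X,\Q)$, which after clearing a denominator $N$ (independent of $\cM$, since $H^2(X,\Z)$ is finitely generated and the comparison of integral lattices involves only finitely many primes) lies in the image of $H^2(X,\Z)$; moreover this image lands in the $(1,1)$-part because the restriction respects the Hodge decomposition and $c_1(\cM)$ is of type $(1,1)$ on $Y$. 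Hence $N\cdot c_1(\cM)\in\NS(X)$ maps to $N\cdot c_1(\cM)$ in $\NS(Y)$, showing $N\cdot\NS(Y)\subset\operatorname{im}(\NS(X)\to\NS(Y))$, i.e.\ the cokernel of $\NS(X)\to\NS(Y)$ is a finitely generated torsion group, hence finite. Finally, to pass from $\NS$ to $\Pic$: the snake lemma applied to the exponential sequences gives $\coker(\Pic^0(X)\to\Pic^0(Y))=0$ by \eqref{Pic0}, so $\coker(\Pic(X)\to\Pic(Y))$ injects into $\coker(\NS(X)\to\NS(Y))$, which is finite.

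\textbf{Main obstacle.} The delicate point is controlling the integral (as opposed to rational) structure when producing line bundles on $X$ from line bundles on $Y$: Theorem~\ref{th:Lefschetz} only gives rational-coefficient statements, so one cannot lift $c_1(\cM)$ integrally, only up to a bounded denominator, which is exactly why the cokernel is merely finite and not trivial (and the Veronese example in \cref{eg:veronese} shows it genuinely can be nontrivial). The other point requiring care is the reduction to $\C$: one must check that $\cd(X\setminus Y)$ is preserved under the relevant base change, which follows from flat base change for coherent cohomology, and that smoothness of $X$ and $Y$ persists, which is clear after shrinking the base.
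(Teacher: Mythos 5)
Your proposal is correct and follows essentially the same route as the paper: reduce to $\C$, combine the rational-coefficient Lefschetz theorem (\cref{th:Lefschetz}) with Hodge theory and the exponential sequence to get the $\Pic^0$ isomorphism and control of $\NS$ up to finite kernel and cokernel, and then invoke the torsion-freeness of \cref{cor:tf} to kill the kernel. The only steps to tighten are that $\NS(X)$ embeds in $H^2(X,\Z)$ but not in $H^2(X,\Z)/\mathrm{torsion}$, and that a torsion class in $\ker(\NS(X)\to\NS(Y))$ must first be lifted --- using the already-established isomorphism $\Pic^{0}(X)\cong\Pic^{0}(Y)$ --- to a torsion element of $\ker(\Pic(X)\to\Pic(Y))$ before \cref{cor:tf} applies; this is precisely what the paper's snake-lemma diagram chase accomplishes.
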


\begin{proof}
We can reduce to the case where the ground field is the field of complex numbers, as follows. 
First, we find a finitely generated field extension $k_0$ of $\Q$ such that both $X$ and $Y$ are defined over $k_0$, 
and embed $k_0$ into $\C$. Then, we can find an algebraically closed field $K$
that contains both the original ground field and $\C$. 
Letting $k$ denote either the original ground field or $\C$, 
and considering $X$ and $Y$ as being defined over $k$, 
we only need to check that both hypothesis and conclusions of the theorem hold true
over $k$ if and only if they hold true over $K$. 
Regarding the hypothesis, we use \cite[Proposition~3.1]{Har70}
and the fact that for an ample line bundle $\O(1)$ on $X$ we have that
$\O(1)_K$ is ample and $H^i(X_K \setminus Y_K, \O(m)_K) = H^i(X \setminus Y, \O(m)) \otimes_kK$
for all $i$ and $m$,
to check that $\cd(X_K\setminus Y_K) = \cd(X \setminus Y)$. 
We denote $\PicS(X)$ and $\PicV(X)$ to be the Picard scheme and the Picard variety of $X$ respectively.
From the fact that $\PicS(X_K) \cong \PicS(X)_K$ and $\PicV(X_K) \cong \PicV(X)_K$,
we see that \eqref{Pic0} and \eqref{Pic} hold over $k$ if and only if they hold over $K$, 
and this shows that the same equivalence holds for \eqref{NS}. 
As for the last assertion, it suffices to look at the cokernel
of $\NS(X) \to \NS(Y)$.
Since $k$ is algebraically closed, 
the irreducible components of $\PicS(X)$ correspond to that of $\PicS(X_K)$ via base change.
This correspondence gives a natural identification $\NS(X) = \NS(X_K)$
and similarly we have $\NS(Y) = \NS(Y_K)$. 
These identifications are compatible with the restriction maps, 
and therefore they induce an identification between their cokernels. 

Consider the compatible exponential sequences on
the associated complex analytic varieties $X^\an$ and $Y^\an$: 
\begin{equation}
\label{eq:exp}
\xymatrix{
0 \ar[r] & \Z_{X^{\an}} \ar[r]\ar[d] & \O_{X^\an} \ar[d]\ar[r]^\exp & \O_{X^\an}^* \ar[r]\ar[d] & 0 \\
0 \ar[r] & \Z_{Y^{\an}} \ar[r] & \O_{Y^\an} \ar[r]^\exp & \O_{Y^\an}^* \ar[r] & 0
}.
\end{equation}
We shall apply the GAGA principle liberally in the following
and, as usual, drop the superscript in the notation 
of complex analytic varieties when writing singular cohomology. 
If $\cd(X\setminus Y)\leq n-4$, then \cref{th:Lefschetz} implies that for $i=1,2$ the map 
$H^i(X,\Z) \to H^i(Y,\Z)$ has finite kernel and cokernel, 
and by Hodge theory this implies that 
$H^i(X,\O_X) \to H^i(Y,\O_Y)$ is an isomorphism for $i=1,2$.
If $\cd(X\setminus Y)\leq n-3$, then these conclusions hold for $i=1$. 

For \eqref{Pic0}, we take the long exact sequence in cohomology associated 
with $\eqref{eq:exp}$.
Observing that $H^{0}(X,\O_X)\to H^{0}(X,\O^*_X)$ is nothing but the exponential function
$\C\to \C^*$ and is surjective, we have the diagram with exact rows
\[
\xymatrix{
0 \ar[r] & H^{1}(X,\Z) \ar[r]\ar[d] & H^{1}(X,\O_{X}) \ar[d]^{\cong}\ar[r] & \Pic^{0}(X) \ar[r]\ar[d] & 0 \\
0 \ar[r] & H^{1}(Y,\Z)  \ar[r] & H^{1}(Y,\O_{Y})\ar[r] & \Pic^{0}(Y) \ar[r] & 0
}.
\]
By inverting the middle vertical arrow, we see that $\Pic^0(X)\to \Pic^0(Y)$ is surjective, and 
therefore it is an isomorphism by \cref{cor:tf}.

For \eqref{NS} and \eqref{Pic}, note that we have  commutative diagrams
with exact rows
\[
\xymatrix{
0 \ar[r] & \NS(X) \ar[r]\ar[d] & H^{2}(X,\Z) \ar[d]\ar[r] & C_1 \ar[r]\ar[d] & 0 \\
0 \ar[r] & \NS(Y)  \ar[r] & H^{2}(Y,\Z)\ar[r] & C_2 \ar[r] & 0
}
\]
and
\[
\xymatrix{
0 \ar[r] & C_1 \ar[r]\ar[d] & H^{2}(X,\O_X) \ar[d]^{\cong}\ar[r] & D_1 \ar[r]\ar[d] & 0 \\
0 \ar[r] & C_2  \ar[r] & H^{2}(Y,\O_{Y})\ar[r] & D_2 \ar[r] & 0
}.
\]
The fact that the middle vertical arrow in the second diagram is an isomorphism follows from 
the hypothesis that $\cd(X\setminus Y)\leq n-3$. 
We see from the second diagram that $C_1\to C_2$ is injective.
Going back to the first diagram, we see that 
$\Ker(\NS(X)\to\NS(Y))\cong \Ker(H^{2}(X,\mathbb{Z})\to H^{2}(Y,\mathbb{Z}))$, which is finite.
Moreover, if $\cd(X\setminus Y)\leq n-4$, then we also have that
$\Coker(\NS(X)\to \NS(Y))\subset \Coker(H^{2}(X,\mathbb{Z})\to H^{2}(Y,\mathbb{Z}))$, which is finite.

We see by the diagram
\[
\xymatrix{
0 \ar[r] & \Pic^0(X) \ar[r]\ar[d] & \Pic(X)\ar[r]\ar[d] & \NS(X) \ar[r]\ar[d] & 0 \\
0 \ar[r] & \Pic^0(Y)  \ar[r] & \Pic(Y)\ar[r] & \NS(Y) \ar[r] & 0
}
\]
that $\Ker(\Pic(X)\to\Pic(Y))\cong \Ker(\NS(X)\to\NS(Y))$ and $\Coker(\Pic(X)\to\Pic(Y))\subset \Coker(\NS(X)\to\NS(Y))$, hence both kernel and cokernel of $\Pic(X)\to\Pic(Y)$ are finite.
We then apply again \cref{cor:tf} to conclude that the kernel is trivial.
\end{proof}

The next example shows that the conclusions in \cref{l:Pic-iso} are optimal. 

\begin{example}
\label{eg:veronese}
With the same assumptions as in \cref{l:Pic-iso},
the map $\Pic(X)\to \Pic(Y)$ can have nontrivial torsion in the cokernel.
For example, consider the $d$-uple embedding $\mathbb{P}^n\hookrightarrow\mathbb{P}^N$
over the complex numbers.
Since in this case the Lefschetz hyperplane theorem with rational coefficient holds, 
it follows by a theorem of Ogus that
$\mathbb{P}^n$ is an ample subvariety of $\mathbb{P}^N$
(cf.\ \cite[Theorem~7.1]{Ott12}).
Nonetheless, the induced map on Picard groups has cokernel isomorphic to $\mathbb{Z}/d\mathbb{Z}$.
Note that this also gives an example where the 
effective Lefschetz condition $\Leff(X,Y)$ 
fails for an ample subvariety. By contrast, it follows by \cite[Proposition~IV.1.1]{Har70}
that the Lefschetz condition $\Lef(X,Y)$
always holds for ample locally complete intersection subschemes of smooth projective varieties. 
\end{example}

We apply \cref{l:Pic-iso} to compare the Albanese map of a variety to that of an ample subvariety.

\begin{corollary}
\label{th:Alb}
Let $X$ be a smooth variety defined over an algebraically closed field of characteristic zero 
and $Y \subset X$ a smooth ample subvariety of dimension $\ge 2$.
\begin{enumerate}
\item
\label{item:Alb(i)=isom}
The induced map on Albanese varieties $\Alb(Y)\to \Alb(X)$ is an isomorphism, 
hence we have the commutative diagram
\[
\xymatrix{
Y \ar@{^(->}[r] \ar[d]_{\alb_Y} &X \ar[d]^{\alb_X}\\
\Alb(Y) \ar[r]^\cong &\Alb(X)
}.
\]
\item
\label{item:alb=alg-fiber-sp}
If $\alb_Y$ is an algebraic fiber space, then so is $\alb_X$. 
\item
\label{item=not-mAd}
If $Y$ is not of maximal Albanese dimension, then neither is $X$, and furthermore
$\alb_X$ and $\alb_Y$ have the same image and share the same Stein factorization.
\end{enumerate}
\end{corollary}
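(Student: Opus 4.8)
The strategy is to deduce everything from part \eqref{item:Alb(i)=isom}, which is where the real content lies, and then to read off \eqref{item:alb=alg-fiber-sp} and \eqref{item=not-mAd} by formal properties of Albanese morphisms and Stein factorizations. For \eqref{item:Alb(i)=isom}, the plan is as follows. Since $Y$ is a smooth ample subvariety of dimension $\ge 2$, we have $\cd(X\setminus Y)\le n-3$, so \cref{l:Pic-iso}\eqref{Pic0} applies and gives an isomorphism $\Pic^0(X)\xrightarrow{\ \cong\ }\Pic^0(Y)$. Dualizing this isomorphism of abelian varieties yields an isomorphism $\Alb(Y)=\Pic^0(Y)^\vee\xrightarrow{\ \cong\ }\Pic^0(X)^\vee=\Alb(X)$, and one checks that this dual map is precisely the functorially induced map $\Alb(Y)\to\Alb(X)$ attached to the inclusion $Y\hookrightarrow X$ (the restriction map on $\Pic^0$ and the induced map on Albanese varieties are dual to one another by the universal properties). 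The commutativity of the displayed square is then just the functoriality of the Albanese morphism: the inclusion $\iota\colon Y\hookrightarrow X$ induces $\alb_X\circ\iota=(\text{induced map})\circ\alb_Y$, and the induced map is the isomorphism just produced.

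For \eqref{item:alb=alg-fiber-sp}, recall that an algebraic fiber space means a surjective morphism with connected fibers (equivalently, $\O$-connected, i.e. the structure sheaf pushes forward to the structure sheaf of the base). Suppose $\alb_Y\colon Y\to\Alb(Y)$ is an algebraic fiber space. From the commutative square in \eqref{item:Alb(i)=isom}, $\alb_X$ factors through $Y$'s image in a way that forces surjectivity: the image of $\alb_X$ contains the image of $\alb_X\circ\iota$, which equals the image of $\alb_Y$ under the isomorphism $\Alb(Y)\cong\Alb(X)$, hence is all of $\Alb(X)$; and since the image of $\alb_X$ is a translate of an abelian subvariety of $\Alb(X)$ that contains a translate generating $\Alb(X)$, it must be everything. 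So $\alb_X$ is surjective. For connectedness of the general fiber one invokes that $\Alb(X)$ generated the Picard-dual correctly: more efficiently, one uses the standard fact that $\alb_X$ is an algebraic fiber space if and only if $H^0(X,\O_X)\to H^0(\Alb(X),\O)$ is the natural identification and the Stein factorization of $\alb_X$ is the map to an abelian variety isogenous to $\Alb(X)$ — but by the universal property of the Albanese the Stein factorization of $\alb_X$ is itself $\alb_X$ up to replacing the target by a translate, which is impossible unless it is already connected, because the factored abelian variety would be a quotient of $\Alb(X)$ contradicting universality. Hence $\alb_X$ is an algebraic fiber space.

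For \eqref{item=not-mAd}: $Y$ not of maximal Albanese dimension means $\dim\overline{\alb_Y(Y)}<\dim Y$. Consider the Stein factorizations $Y\to Y'\to\Alb(Y)$ and $X\to X'\to\Alb(X)$ of $\alb_Y$ and $\alb_X$. The commutative square of \eqref{item:Alb(i)=isom}, together with the isomorphism $\Alb(Y)\cong\Alb(X)$, shows that $\overline{\alb_X(X)}\supseteq\overline{\alb_X(\iota(Y))}=\overline{\alb_Y(Y)}$ (identifying the two Albanese varieties). Conversely, I claim these closures coincide: $\overline{\alb_X(X)}$ is an abelian subvariety (up to translation) of $\Alb(X)$, so it suffices to see it is generated by $\overline{\alb_Y(Y)}$; but the subtorus of $\Alb(X)$ generated by $\overline{\alb_Y(Y)}$ is all of $\Alb(Y)\cong\Alb(X)$ only if $Y$ had maximal... — here I must argue more carefully. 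The clean argument: the image $\overline{\alb_X(X)}$ determines, and is determined by, the largest abelian subvariety of $\Alb(X)$ through which $\alb_X$ does \emph{not} factor nontrivially; using the isomorphism $\Pic^0(X)\cong\Pic^0(Y)$ one sees $H^0(X,\Omega_X^1)$ and $H^0(Y,\Omega_Y^1)$ have images in the respective cotangent spaces of the Albanese varieties that are identified under the isomorphism, and the rank of the differential of $\alb_X$ at a general point equals that of $\alb_Y$ by restricting forms. Hence $\dim\overline{\alb_X(X)}=\dim\overline{\alb_Y(Y)}<\dim Y\le\dim X$, so $X$ is not of maximal Albanese dimension. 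The equality of images $\overline{\alb_X(X)}=\overline{\alb_Y(Y)}$ (inside $\Alb(X)\cong\Alb(Y)$) then follows since both are abelian subvarieties of the same dimension with one containing the other, and the Stein factorizations agree because the Stein factorization of a morphism to an abelian variety with given image is intrinsic to that image.

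\textbf{Main obstacle.} The serious point is part \eqref{item:Alb(i)=isom} — specifically, verifying that the isomorphism $\Pic^0(X)\cong\Pic^0(Y)$ from \cref{l:Pic-iso}, after dualizing, really is the canonical map $\Alb(Y)\to\Alb(X)$ and fits into the commutative square; this is a compatibility-of-universal-properties check that must be stated cleanly rather than waved at. The remaining parts \eqref{item:alb=alg-fiber-sp} and \eqref{item=not-mAd} are then formal consequences about images and Stein factorizations of maps to abelian varieties, and the only care needed is to phrase the "same image / same Stein factorization" assertion precisely, since $\Alb(Y)$ and $\Alb(X)$ must be identified via the isomorphism of \eqref{item:Alb(i)=isom} for the statement to make sense.
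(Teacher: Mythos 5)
Your part \eqref{item:Alb(i)=isom} is correct and is exactly the paper's argument: \cref{l:Pic-iso} gives that $\Pic^0(X)\to\Pic^0(Y)$ is an isomorphism, and its dual is the canonical map $\Alb(Y)\to\Alb(X)$; the compatibility check you flag is routine. The real issues are in \eqref{item:alb=alg-fiber-sp} and \eqref{item=not-mAd}, which you try to settle by formal properties of Albanese maps, whereas both in fact need a further input. For \eqref{item:alb=alg-fiber-sp}, surjectivity of $\alb_X$ does follow from the commutative square, but your argument for connectedness of its fibers is not a proof: in the Stein factorization $X\to W\to\Alb(X)$ the intermediate variety $W$ is merely a normal variety finite over $\Alb(X)$, not a priori an abelian variety, so there is no immediate contradiction with the universal property of the Albanese. (The underlying fact---that a surjective Albanese morphism in characteristic zero has connected fibers---is true, but it is a theorem of Kawamata, not a formal consequence of universality.) The paper sidesteps this with the elementary \cref{l:afs}: since $\O_Z\to\pi_*\O_X\to(\pi|_Y)_*\O_Y$ is an isomorphism, $\O_Z\to\pi_*\O_X$ splits, giving a section of the finite part of the Stein factorization of $\pi$, which is then an isomorphism because $X$ is connected.

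Part \eqref{item=not-mAd} has two genuine gaps. First, the inclusion $\overline{\alb_Y(Y)}\subseteq\overline{\alb_X(X)}$ is clear, but the reverse inclusion does not follow from comparing generic ranks of differentials or from $H^0(X,\Omega^1_X)\cong H^0(Y,\Omega^1_Y)$: a global $1$-form can be generically nondegenerate on $X$ while degenerating along $Y$, and the observation that both images generate $\Alb(X)$ proves nothing (a genus-two curve generates its Jacobian without equalling it). The paper obtains the equality of images from \cite[Theorem~B]{Lau18}, which says that a dominant morphism from $X$ restricts to a dominant morphism on an ample subvariety---this is where ampleness of $Y$ re-enters. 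Second, your assertion that ``the Stein factorization of a morphism to an abelian variety with given image is intrinsic to that image'' is false: two morphisms onto the same $Z$ can have entirely different Stein factorizations. The paper's \cref{l:Stein} proves that $\alb_X$ and $\alb_Y$ share their Stein factorization by showing that a general fiber $X_w$ of the Stein factorization of $\alb_X$ meets $Y$ in a connected set, using that $X_w\cap Y$ is an ample subvariety of $X_w$ by \cite[Proposition~4.8]{Lau16} together with the Lefschetz theorem \cref{th:Lefschetz}. Neither of these inputs appears in your proposal, so \eqref{item=not-mAd} is not established as written.
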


\begin{proof}
By \cref{l:Pic-iso}, $\Pic^0(X) \to \Pic^0(Y)$ is an isomorphism, hence
so is the dual map $\Alb(Y) \to \Alb(X)$. This gives \eqref{item:Alb(i)=isom}.
\eqref{item:alb=alg-fiber-sp} and \eqref{item=not-mAd} follow from
\cref{l:afs} and \cref{l:Stein} respectively.
\end{proof}

\begin{lemma}\label{l:afs}
Let $X$ be a projective variety defined over a field $k$ of arbitrary characteristic and $Y\subset X$ be a subvariety.
Let $\pi\colon X\to Z$ be a morphism to a projective variety $Z$ such that its restriction to $Y$, 
$\pi|_Y \colon Y\to Z$, is an algebraic fiber space.
Then $\pi$ is also an algebraic fiber space.
\end{lemma}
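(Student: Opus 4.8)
The plan is to prove directly that the structure homomorphism $\mathcal{O}_Z\to\pi_*\mathcal{O}_X$ is an isomorphism; since $\pi$ is automatically surjective (its image contains $(\pi|_Y)(Y)=Z$) and projective, this is exactly what is needed. Set $\mathcal{A}:=\pi_*\mathcal{O}_X$, a coherent sheaf of $\mathcal{O}_Z$-algebras.

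First I would exploit the factorization $\pi|_Y=\pi\circ\iota$, where $\iota\colon Y\hookrightarrow X$ is the inclusion. Pushing the surjection $\mathcal{O}_X\twoheadrightarrow\iota_*\mathcal{O}_Y$ forward by $\pi$ gives a homomorphism of $\mathcal{O}_Z$-algebras $\rho\colon\mathcal{A}\to\pi_*\iota_*\mathcal{O}_Y=(\pi|_Y)_*\mathcal{O}_Y$, and by functoriality of structure maps the composite of the unit $\mathcal{O}_Z\to\mathcal{A}$ with $\rho$ is the unit $\mathcal{O}_Z\to(\pi|_Y)_*\mathcal{O}_Y$, which is an isomorphism because $\pi|_Y$ is an algebraic fiber space. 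Composing $\rho$ with the inverse of that isomorphism yields an $\mathcal{O}_Z$-algebra retraction $r\colon\mathcal{A}\to\mathcal{O}_Z$ of the unit. Writing $\mathcal{I}=\ker r$, we obtain a splitting $\mathcal{A}=\mathcal{O}_Z\oplus\mathcal{I}$ as $\mathcal{O}_Z$-modules in which $\mathcal{I}$ is an ideal of $\mathcal{A}$; it remains to show $\mathcal{I}=0$.

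The key step is a local integrality argument at an arbitrary point $z\in Z$, with $R:=\mathcal{O}_{Z,z}$. On the one hand, since $\pi$ is projective, $\mathcal{A}$ is a coherent $\mathcal{O}_Z$-module, so $\mathcal{A}_z$ is a finite $R$-module and every element of $\mathcal{A}_z$ is integral over $R$. On the other hand, since $X$ is a variety and $\pi$ is dominant, $\mathcal{A}_z$ is a filtered union of the rings $\mathcal{O}_X(\pi^{-1}(V))$ over open neighborhoods $V$ of $z$, each a subring of $K(X)$; hence $\mathcal{A}_z$ is an integral domain. Now suppose $s\in\mathcal{I}_z$ is nonzero, and choose a monic relation $s^m+a_{m-1}s^{m-1}+\dots+a_1s+a_0=0$ with $a_i\in R$ and $m$ minimal. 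Since $\mathcal{I}_z$ is an ideal containing $s$, the element $a_0=-s\,(s^{m-1}+a_{m-1}s^{m-2}+\dots+a_1)$ lies in $\mathcal{I}_z$; but $a_0\in R$ and $R\cap\mathcal{I}_z=0$ by the splitting, so $a_0=0$. Cancelling $s$ in the domain $\mathcal{A}_z$ then produces a monic relation of degree $m-1$, contradicting minimality of $m$ when $m\ge 2$ and forcing $s=0$ when $m=1$. Thus $\mathcal{I}_z=0$ for every $z$, so $\mathcal{A}=\mathcal{O}_Z$ and $\pi$ is an algebraic fiber space.

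The only point I expect to require care is the two-sided description of the stalk $\mathcal{A}_z$ that makes the cancellation argument run: that it is module-finite over $\mathcal{O}_{Z,z}$ (so its elements are integral over $R$, which uses properness/coherence) \emph{and} an integral domain (so one may cancel $s$, which uses that $X$ is integral and $\pi$ dominant, whence $\pi^{-1}(V)$ is dense for every nonempty open $V\subseteq Z$). Here \emph{algebraic fiber space} is understood in the sense that the morphism is surjective and its structure map $\mathcal{O}_{\mathrm{base}}\to(\text{direct image of the structure sheaf})$ is an isomorphism; note that no hypothesis on $Y$ beyond being a subvariety of $X$ enters the argument.
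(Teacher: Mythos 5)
Your proof is correct. The first half is exactly the paper's opening move: pushing forward $\mathcal{O}_X\twoheadrightarrow\iota_*\mathcal{O}_Y$ and using that $\mathcal{O}_Z\to(\pi|_Y)_*\mathcal{O}_Y$ is an isomorphism to produce an $\mathcal{O}_Z$-algebra retraction of the unit $\mathcal{O}_Z\to\pi_*\mathcal{O}_X$, hence a splitting $\pi_*\mathcal{O}_X=\mathcal{O}_Z\oplus\mathcal{I}$. Where you diverge is in showing $\mathcal{I}=0$: the paper argues geometrically, observing that the retraction gives a section of the finite morphism $\mathbf{Spec}\,\pi_*\mathcal{O}_X\to Z$ appearing in the Stein factorization and concluding from connectedness (really, integrality, inherited from $X$) of that intermediate scheme; you instead work stalkwise, using that $(\pi_*\mathcal{O}_X)_z$ is simultaneously module-finite over $\mathcal{O}_{Z,z}$ (properness plus coherence) and a domain inside $K(X)$ (integrality of $X$ and dominance of $\pi$), and then running a minimal-degree integral-dependence argument to kill any nonzero element of $\mathcal{I}_z$. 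The two finishes encode the same geometric fact --- a finite surjection of integral schemes admitting a section is an isomorphism --- but yours makes the commutative algebra fully explicit, which is arguably safer than the paper's rather terse appeal to connectedness, at the cost of a slightly longer write-up. Your identification of the two facts needing care (finiteness of the stalk over $\mathcal{O}_{Z,z}$ and its being a domain) is exactly right, and both are correctly justified.
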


\begin{proof}
Let $X\to \BroadSpec \p_*\O_X\to Z$ be the Stein factorization of $\p$.
The composition $\O_Z\to \p_*\O_X\to (\p|_Y)_*\O_Y$ is an isomorphism, since $\pi|_Y$ is an algebraic fiber space.
This implies $\O_Z\to \p_*\O_X$ split-injects,
giving a section of $\BroadSpec \p_*\O_X\to Z$.
Since $X$ is connected, $\BroadSpec \p_*\O_X$ is also connected.
Thus, $\O_Z\to \p_*\O_X$ is an isomorphism.
\end{proof}

\begin{lemma}\label{l:Stein}
Let $X$ be a smooth projective variety defined over an algebraically closed field of characteristic zero, 
and let $Y\subset X$ be a smooth ample subvariety.
Let $\pi\colon X\to Z$ be a dominant morphism to a projective variety $Z$ 
such that the general fiber dimension of $\pi|_Y$ is positive.
Then $\pi$ and $\pi|_Y$ have the same Stein factorization.
\end{lemma}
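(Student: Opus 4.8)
The plan is to factor both morphisms through their respective Stein factorizations and show that the two intermediate varieties agree. Write the Stein factorization of $\pi$ as $X \xrightarrow{f} W \xrightarrow{g} Z$ with $W = \BroadSpec \pi_*\O_X$, so $f$ has connected fibers and $g$ is finite. Restricting to $Y$, we obtain a morphism $f|_Y \colon Y \to W$, and I want to show this is, up to a finite base change, the Stein factorization of $\pi|_Y$. The key point is that $W$ is normal (since $X$ is smooth, $\pi_*\O_X$ is a normal domain over the generic point and $W$ is integrally closed in $K(X)$), and $g$ is finite surjective. Since $Y$ is ample in $X$, it is $G3$ in $X$ by \cite[Corollary~5.6]{Ott12}, so $K(\^X) = K(X)$; in particular, by \cref{prop:tf-K(^X)} applied over $W$ (or directly), $f^{-1}(w)$ meets $Y$ in a connected set for the generic $w$, and more to the point $Y$ is ample hence cannot be contracted to lower dimension by $f$ unless $f$ itself does nothing on a neighborhood — I will need the hypothesis that the general fiber of $\pi|_Y$ is positive-dimensional precisely to rule out degenerate behavior.

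The cleanest route is the following. Let $Y \xrightarrow{h} V \xrightarrow{e} Z$ be the Stein factorization of $\pi|_Y$, so $h$ has connected fibers and $e$ is finite. Since $f|_Y \colon Y \to W$ is a morphism to a variety finite over $Z$ and $h$ is initial among such (by the universal property of Stein factorization, as $Y$ is normal — indeed smooth — so $h_*\O_Y = \O_V$), the map $f|_Y$ factors as $Y \xrightarrow{h} V \xrightarrow{\beta} W$ with $\beta$ finite. Conversely, I claim $\beta$ is an isomorphism. To see this, first note that $f|_Y$ is surjective onto $W$: its image is a closed subvariety mapping finitely onto $Z$ via $g$, and since $f(Y)$ contains $f$ of an ample subvariety it must be all of $W$ — here one uses that an ample subvariety meets every subvariety of complementary dimension, so $Y$ cannot avoid any fiber component of $f$; alternatively, $\dim f(Y) = \dim W$ follows because $\dim Y - (\text{gen.\ fiber dim of }\pi|_Y) = \dim V = \dim W$ once we know $e$ and $g$ have the same degree data. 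This forces $\beta$ to be finite and surjective between normal varieties, and it is generically one-to-one because $f$ has connected fibers: a general fiber $f^{-1}(w)$ is connected, hence $f^{-1}(w) \cap Y$, being the $h$-preimage of the finite set $\beta^{-1}(w)$, is connected only if $\beta^{-1}(w)$ is a single point. Thus $\beta$ is birational and finite onto a normal variety, hence an isomorphism by Zariski's main theorem, and $V \cong W$ compatibly, so $\pi$ and $\pi|_Y$ have the same Stein factorization.

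The main obstacle I anticipate is the surjectivity and fiber-connectedness bookkeeping: showing $f(Y) = W$ and that $f^{-1}(w) \cap Y$ is connected for generic $w$. For the former, the honest input is that $Y$, being ample, meets every closed subset of $X$ of dimension $\ge \codim Y$, so it cannot be disjoint from any fiber of $f$; combined with $g$ finite this pins down $\dim f(Y) = \dim W$ and hence $f(Y) = W$. For the latter, one can argue via \cref{prop:tf-K(^X)}: replacing $Z$ by $W$ (over which $Y \subset X$ still has $K(\widehat X) = K(X)$ since completions along $Y$ are unchanged), the fibers of $f$ restricted to $Y$ inherit connectedness from the fact that $f^{-1}(w)$ is connected and $Y$ is ample in $X$, so $Y \cap f^{-1}(w)$ is connected by the Lefschetz-type connectedness already recorded. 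I expect this connectedness step to be where the hypothesis "general fiber dimension of $\pi|_Y$ is positive" is genuinely used, ensuring that $f$ is not generically finite on $Y$ (which would make $h$ trivial and the statement vacuous or false). Once these two facts are in hand, the identification $V \cong W$ via Zariski's main theorem is routine.
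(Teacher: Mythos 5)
Your overall strategy coincides with the paper's: compare the two Stein factorizations, note that the induced map $\beta\colon V\to W$ between the intermediate varieties is finite, and show it is birational (hence an isomorphism, since $W$ is normal) by proving that $Y\cap f^{-1}(w)$ is connected for general $w\in W$. The difficulty is that the one step carrying all the content --- that connectedness --- is asserted rather than proved. Connectedness of $f^{-1}(w)$ together with ampleness of $Y$ in $X$ does not formally imply connectedness of $Y\cap f^{-1}(w)$: every Lefschetz/connectedness statement recorded so far (\cref{th:Lefschetz}, \cref{prop:tf-K(^X)}, \cref{cor:tf}) concerns the pair $(X,Y)$, not the pair $(f^{-1}(w),\,Y\cap f^{-1}(w))$, and \cref{prop:tf-K(^X)} in particular yields connectedness of preimages of $Y$ under \emph{surjective} morphisms to $X$, which the inclusion of a fiber is not. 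The missing ingredient is \cite[Proposition~4.8]{Lau16}: the intersection of an ample subvariety with a general fiber $X_w$ of $f$ is again an \emph{ample} subscheme of $X_w$. Once that is in hand, $\cd\bigl(X_w\setminus (X_w\cap Y)\bigr)=r-1$ and \cref{th:Lefschetz} applied to the pair $(X_w, X_w\cap Y)$ gives $H^0(X_w,\Q)\cong H^0(X_w\cap Y,\Q)$, i.e.\ connectedness. This is also where the positive-fiber-dimension hypothesis genuinely enters, since the $H^0$-isomorphism in \cref{th:Lefschetz} requires $\dim(X_w\cap Y)>0$; your suggestion that the hypothesis only serves to keep the statement from being vacuous misses this.

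A second, smaller gap: you never establish that $\pi|_Y$ is dominant, which is needed before $V\to Z$ is finite \emph{surjective} and before your dimension bookkeeping starts. Your argument that $Y$ meets every fiber of $f$ because ample subvarieties meet every subvariety of complementary dimension only works when $\dim X-\dim Z\ge r$, i.e.\ $\dim Y\ge\dim Z$, which is not among the hypotheses; the stated hypothesis only controls the fibers of $\pi|_Y$ over its \emph{image}. The paper invokes \cite[Theorem~B]{Lau18} to get dominance of $\pi|_Y$. With these two inputs supplied, the rest of your argument (functoriality of the Stein factorization, the identification of $\deg\beta$ with the number of connected components of $Y\cap f^{-1}(w)$, and Zariski's main theorem) is correct and matches the paper's proof.
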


\begin{proof}
By \cite[Theorem B]{Lau18}, $\pi|_Y:Y\to Z$ is dominant as well.
Let $Y\to Z'\to Z$ be the Stein factorization of $\pi|_Y$ and $X\to W\to Z$ be the Stein factorization of $\pi$.
Then the morphism $Z'\to Z$ factors through $W\to Z$ and the induced map $g\colon Z'\to W$ is finite surjective.
We claim that $g$ is an isomorphism.
To see this, fix a general closed point $w \in W$.
Note that $X_w$ is a smooth connected variety and $X_w \cap Y$ is positive-dimensional.
In fact, $X_w\cap Y$ is the union of the fibers of $Y\to Z'$ over 
$g^{-1}(w)$, which is a finite set.
In particular, $X_w\cap Y$ is connected if and only if $g$ is birational.
By \cite[Proposition~4.8]{Lau16}, $X_w \cap Y$ is an ample subscheme in $X_w$.
Applying \cref{th:Lefschetz}, we see that $X_w \cap Y$ must be connected, 
hence $g$ is birational. Since $W$ is normal, it follows that $g$ is an isomorphism.
This completes the proof.
\end{proof}

\section{Applications to abelian varieties}

The results of this section hold over an algebraically closed field of characteristic zero.

The Grothendieck--Lefschetz theorem was used by Sommese in \cite{Som78} to study properties of 
abelian varieties in the context of manifolds with special ample divisors. 
One of the results he proved is that abelian varieties of dimension $\ge 2$
cannot be realized as ample divisors in smooth projective varieties
(see the first assertion in \cite[Corollary~I-A]{Som76}).
The condition of the dimension of the abelian variety is of course 
necessary, since elliptic curves can be embedded as cubics in $\P^2$. 

As a direct application of \cref{l:Pic-iso}, we generalize this result
of Sommese to higher codimensions, in the following sense.

\begin{proposition}
	\label{th:Y=A-cannot-be-ample}
	Let $A$ be an abelian variety.
		If $\dim A\geq 2$, 
		then $A$ cannot be realized as an ample subvariety of any smooth projective variety.
\end{proposition}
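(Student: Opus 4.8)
The plan is to derive a contradiction from the assumption that $A$, with $\dim A = m \ge 2$, is an ample subvariety of a smooth projective variety $X$ of dimension $n$. Since $A$ is an ample subvariety of codimension $r = n - m$, the exceptional divisor of the blow-up of $X$ along $A$ is $(r-1)$-ample, and by \cite[Theorem~5.4]{Ott12} (or the properties recalled in \cref{s:positivity}) this implies $\cd(X \setminus A) = r - 1 = n - m - 1 \le n - 3$, the inequality using $m \ge 2$. Thus \cref{l:Pic-iso} applies: the restriction map $\Pic(X) \to \Pic(A)$ is injective and induces an isomorphism $\Pic^0(X) \xrightarrow{\ \cong\ } \Pic^0(A)$. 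Dualizing the latter, the natural map $\Alb(A) \to \Alb(X)$ is an isomorphism. But $A$ is an abelian variety, so $\Alb(A) = A$ and $\alb_A = \id_A$ (up to translation), whence $\Alb(X) \cong A$ and, by \cref{th:Alb}\eqref{item:Alb(i)=isom}, the restriction of $\alb_X \colon X \to \Alb(X) = A$ to the subvariety $A \subset X$ is (up to translation) the identity of $A$.

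The key step is now to exploit that $\alb_X\big|_A = \id_A$ forces $\alb_X$ to be a retraction of $X$ onto $A$, which is incompatible with $A$ being ample of positive codimension. Concretely, let $\rho := \alb_X \colon X \to A$; then $\rho \circ \iota = \id_A$ where $\iota \colon A \hookrightarrow X$ is the inclusion. Take a general closed point $a \in A$ and consider the fiber $F := \rho^{-1}(a)$. Since $\rho \circ \iota = \id_A$, we have $F \cap A = \{a\}$ as sets (in fact scheme-theoretically, by genericity and smoothness), so the ample subvariety $A$ meets the positive-dimensional closed subvariety $F$ in a single point. Here $F$ is positive-dimensional because $\dim F \ge \dim X - \dim A = r \ge 1$ (as $A$ has positive codimension — note $A$ cannot equal $X$, since $X$ is not an abelian variety unless... actually we must rule out $X = A$: but if $X = A$ then $A$ is not an ample subvariety of itself, as the blow-up along the whole space is empty/not defined, or more simply $\cd(X \setminus A) = \cd(\emptyset) = -\infty$ and codimension $r = 0$ is excluded in \cref{def:ample-Y}). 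This contradicts a basic property of ample subvarieties: an ample subvariety of positive codimension must meet every closed subvariety of complementary dimension in a nonempty set, and in particular cannot meet a positive-dimensional subvariety in an isolated point — more precisely, by Ottem's intersection-theoretic properties \cite[Corollary~5.3]{Ott12} (or the Lefschetz-type statement \cref{th:Lefschetz} applied to $F \cap A$ inside $F$, once one checks $F \cap A$ is an ample subscheme of $F$ via \cite[Proposition~4.8]{Lau16}, forcing it to be connected of the expected positive dimension), the intersection $F \cap A$ cannot be zero-dimensional.

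The main obstacle I anticipate is making the step "$F \cap A$ is a positive-dimensional ample subscheme of $F$" fully rigorous: one needs the general fiber $F$ of $\alb_X$ to be smooth and connected (Stein factorization plus generic smoothness in characteristic zero), and one needs $F \cap A$ to be nonempty and of the expected dimension $\dim F + \dim A - n = \dim A - (n - \dim F) \ge \dim A - r$ — but the cleanest route avoids dimension-counting entirely: since $\rho \circ \iota = \id_A$, the composite $A \to X \to A$ is finite (it is the identity), yet $A \hookrightarrow X$ is an ample subvariety of positive codimension and $\rho$ has positive-dimensional general fibers, and an ample subvariety cannot admit a finite morphism to a variety of strictly smaller dimension than $X$ when composed with a fibration — equivalently, the retraction would split $X$ birationally as $A \times (\text{fiber})$ near $A$, contradicting that the normal bundle $\cN_{A/X}$ is ample (an ample bundle on a projective variety has no trivial quotient, but a product would give $\cN_{A/X}$ a trivial summand when $r \ge 1$... this needs care since it's only a product formally along $A$). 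The safest argument is the connectedness one: apply \cref{th:Alb}\eqref{item:Alb(i)=isom} to get the retraction, then observe that the fiber $F = \rho^{-1}(a)$ through a general point of $A$ is positive-dimensional and $F \cap A$ is a single (reduced) point, while \cref{th:Lefschetz} (using $\cd(F \setminus (F \cap A)) \le \dim F - 3$ via \cite[Proposition~4.8]{Lau16}, valid since $\dim F = r + \text{something} \ge 2$ when $m \ge 2$ and $\dim Z$ is chosen appropriately) forces $F \cap A$ to be connected of dimension $\ge \dim F - r \ge 1$, a contradiction. I would present the connectedness version as the main line and relegate the normal-bundle heuristic to a remark.
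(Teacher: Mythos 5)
Your first paragraph is fine and matches the paper: the cohomological-dimension bound puts you in the range of \cref{l:Pic-iso}, the dual of $\Pic^0(X)\xrightarrow{\,\cong\,}\Pic^0(A)$ identifies $\Alb(X)$ with $A$, and \cref{th:Alb} makes $\rho:=\alb_X\colon X\to A$ a retraction of the inclusion. The gap is in the step you chose as your ``main line.'' The general fiber $F=\rho^{-1}(a)$ of the retraction has dimension \emph{exactly} $r=\codim(A,X)$ (it is a general fiber of a surjective morphism from $X$ onto the $m$-dimensional variety $A$), so your inequality $\dim F-r\ge 1$ is false: $F\cap A$ has expected dimension $0$, and a single reduced point is precisely what one expects. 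Nothing forbids a zero-dimensional ample subscheme (a point is an ample subvariety of $\P^r$), and \cref{th:Lefschetz} is vacuous here: with $q=\cd(F\setminus(F\cap A))=r-1=\dim F-1$, the range $i<\dim F-1-q$ is empty, so no connectedness or positivity of dimension is forced. Likewise the claim that an ample subvariety ``cannot meet a positive-dimensional subvariety in an isolated point'' is false already for divisors (two lines in $\P^2$). So the contradiction never materializes.

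The paper closes the argument through the Picard group rather than through fibers: since $\Pic(X)\to\Pic(A)$ is injective (\cref{l:Pic-iso}) and $\rho\circ\iota=\id_A$ forces the composite $\Pic(A)\xrightarrow{\rho^*}\Pic(X)\to\Pic(A)$ to be the identity, the pullback $\rho^*\colon\Pic(A)\to\Pic(X)$ is an isomorphism. Hence an ample line bundle on $X$ is (a multiple of) a pullback along $\rho$, so it is trivial on the positive-dimensional fibers of $\rho$ --- absurd (this is \cref{l:finite}). Your ``cleanest route'' aside gestures exactly at this (``an ample subvariety cannot admit \dots when composed with a fibration'') before drifting into the normal-bundle-splitting heuristic and then settling on the connectedness argument; had you pushed the Picard-group observation one more step, you would have recovered the paper's proof. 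As written, the proposal does not establish the proposition.
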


\begin{proof}
	Suppose by contradiction that $A$ is an ample subvariety of a smooth projective variety $X$.
	By \cref{th:Alb}, the Albanese morphism on $X$ gives a retraction from $X$ to $A$.
	Since the map $\Pic(X)\to \Pic(A)$, induced by the inclusion $A\subset X$, is injective by \cref{l:Pic-iso},
	and the composition $\Pic(A)\to \Pic(X)\to \Pic(A)$ is the identity map,
	the map $\Pic(A)\to \Pic(X)$ induced by the retraction map is an isomorphism.
	This implies, by \cref{l:finite}, that the retraction map is finite, which is impossible.
\end{proof}

\begin{lemma}
	\label{l:finite}
	Let $f\colon X\to Z$ be a morphism a projective varieties.
	If $f^*\colon \Pic(Z)\to \Pic(X)$ has torsion cokernel, then $f$ is a finite morphism.
\end{lemma}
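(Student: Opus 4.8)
The plan is to argue by contradiction: suppose $f\colon X \to Z$ is a morphism of projective varieties such that $f^*\colon \Pic(Z) \to \Pic(X)$ has torsion cokernel, but $f$ is not finite. Replacing $Z$ by the image of $f$ (which does not change $\Pic(Z) \to \Pic(X)$ up to composition with a finite pullback, and in any case only shrinks the cokernel), and passing to the Stein factorization, I may assume $f$ is surjective with connected fibers and $Z$ is normal. Since $f$ is not finite, there is a positive-dimensional fiber; by upper semicontinuity of fiber dimension, the locus of points of $Z$ over which the fiber has positive dimension is closed, and I can choose an irreducible curve $C \subset Z$ meeting the complement of this locus, so that $F := f^{-1}(C)$ has a component $F_0$ dominating $C$ with $\dim F_0 \ge \dim C + 1 = 2$, while some fiber of $F_0 \to C$ is a single point (or finite).

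The key step is to derive a numerical contradiction on $F_0$. Let $\cL = f^*\cM$ for some ample $\cM$ on $Z$; then $\cL$ is nef on $X$ but $\cL|_{F_0}$ is not big, indeed $(\cL|_{F_0})^{\dim F_0} = 0$ because $\cL|_{F_0}$ is pulled back from the curve $C$. On the other hand, pick an ample line bundle $\cA$ on $X$. By hypothesis some power $\cA^{\otimes m}$ lies in the image of $f^*$ up to a fixed torsion twist — more precisely, $m[\cA] = f^*[\cN]$ in $\NS(X)$ for some $[\cN] \in \NS(Z)$ and some $m > 0$, since the cokernel of $\NS(X) \to$ is torsion too (numerical classes kill the torsion ambiguity). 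Restricting to $F_0$: on one hand $(\cA|_{F_0})^{\dim F_0} > 0$ since $\cA$ is ample; on the other hand $m^{\dim F_0}(\cA|_{F_0})^{\dim F_0} = (f^*\cN|_{F_0})^{\dim F_0} = ((f|_{F_0})^*(\cN|_C))^{\dim F_0} = 0$, because $\cN|_C$ is a divisor class on a curve and $\dim F_0 \ge 2$ forces the pullback to have vanishing top self-intersection. This contradiction shows no positive-dimensional fiber can exist, so $f$ is finite.

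The main obstacle I anticipate is the reduction bookkeeping: making sure that passing to the Stein factorization and to the image of $f$ genuinely preserves the hypothesis "$f^*$ has torsion cokernel", and handling the subtlety that $\Pic(Z) \to \Pic(X)$ having torsion cokernel does not immediately give $\NS(Z) \to \NS(X)$ surjective with a clean splitting unless one is careful about $\Pic^0$. One clean way around this is to avoid Stein factorization entirely and argue directly: if $f$ is not finite, pick $x \in X$ with $\dim f^{-1}(f(x)) \ge 1$, let $C \subset X$ be an irreducible curve through $x$ contracted by $f$ (so $C \subset f^{-1}(f(x))$), and observe that for any $\cM \in \Pic(Z)$ one has $\deg(f^*\cM|_C) = 0$; hence every line bundle in the image of $f^*$ has degree zero on $C$, so every line bundle whose class is torsion modulo this image also has degree zero on $C$ after passing to a power, forcing $\deg(\cA|_C) = 0$ for $\cA$ ample on $X$ — impossible. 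This one-line version sidesteps all the reductions, and I expect it is the argument the authors intend; the only thing to verify carefully is that "torsion cokernel" lets us conclude $\deg(\cA^{\otimes m}|_C) = 0$ for some $m \ge 1$, which is immediate since $m[\cA] \in \im(f^*)$ and degree on $C$ is a group homomorphism $\Pic(X) \to \Z$ vanishing on $\im(f^*)$.
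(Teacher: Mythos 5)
Your streamlined version at the end (pick a contracted curve $C$, note that degree along $C$ is a homomorphism $\Pic(X)\to\Z$ vanishing on $\im(f^*)$, hence on any class torsion modulo $\im(f^*)$, contradicting ampleness) is exactly the paper's proof. The proposal is correct and takes essentially the same approach; the longer first argument with Stein factorization and top self-intersections is unnecessary.
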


\begin{proof}
	Take an ample line bundle $\cL$ on $X$.
	By the hypothesis, after replacing $\cL$ by a suitable multiple, $\cL$ can be expressed as the pull-back of a line bundle $\mathcal{A}$ on $Z$.
	If there is any irreducible curve $C$ contracted by $f$, then $\cL\cdot C=f^*\mathcal{A}\cdot C=0$, contradicting the assumption that $\cL$ is ample.
\end{proof}

The second assertion of Corollary I-A 
	as well as Propositon I and Corollary I-B of \cite{Som76} can also be extended in a similar fashion
	from ample divisors 
	to ample subvarieties of higher codimension by combining 
	the same arguments given there with \cref{th:Alb}. 
More results which fit within the context of \cite{Som76} will be proved in the next section
(see \cref{c:Y=AxB,c:abelian-fibrations}).	

As we shall further discuss later, 
the next result also relates closely to \cite{Som76}.

\begin{theorem}
\label{th:X-abelian}
Let $X$ be an abelian variety
and $Y\subset X$ a smooth subvariety of codimension $r$ with ample normal bundle.
Then any surjective morphism $\p \colon Y \to Z$ with $\dim Y - \dim Z > r$
extends uniquely to a morphism $\~\p \colon X \to Z$. 
Moreover, the Stein factorization of $\~\pi$ is, up to translation, a quotient morphism 
of abelian varieties $X\to X/B$ composed with a finite surjective map $X/B\to Z$.
\end{theorem}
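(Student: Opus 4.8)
The plan is to first establish uniqueness of the extension, then use cohomological vanishing to produce it, and finally analyze its Stein factorization using the structure theory of abelian varieties. For uniqueness, the key point is that $Y$ has ample normal bundle inside a smooth projective $X$, so by \cite[Theorem~5.4]{Ott12} (in characteristic zero) $Y$ is an ample subvariety of $X$; since $\dim Y \ge \dim Y - \dim Z + \dim Z > r + 0$, in particular $\dim Y \ge 2$, and we may invoke \cref{th:Alb} and \cref{l:Pic-iso}. Two extensions $\~\pi_1, \~\pi_2$ agreeing on $Y$ would give a morphism $X \to Z \times Z$ whose image meets the diagonal along (a set containing) $Y$; since $Y$ is ample in $X$ — hence connected and "large" — and since $X$ is irreducible, a standard argument (e.g.\ using that the locus where $\~\pi_1 = \~\pi_2$ is closed and contains the ample, hence non-contractible, subvariety $Y$) forces $\~\pi_1 = \~\pi_2$ on all of $X$. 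Alternatively, uniqueness is immediate once existence is known in the abelian-variety form below, since a morphism from an abelian variety to $Z$ is determined up to translation by the induced map on Albanese varieties together with one point value, and $\Alb(Y) \to \Alb(X)$ is an isomorphism by \cref{th:Alb}.

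For existence, I would reduce to producing the extension after Stein factorization. Let $\pi\colon Y \to Z$ factor as $Y \xrightarrow{g} Z' \xrightarrow{h} Z$ with $g$ having connected fibers and $h$ finite; since $\dim Y - \dim Z' = \dim Y - \dim Z > r$, it suffices to extend $g$, and then compose with $h$. Now $g$ is an algebraic fiber space, so we may hope to realize $Z'$ via a quotient of $X$. The main tool is a Kodaira-type vanishing: by the hypothesis on relative dimension, $\mathcal{O}_Y$ pushed forward along $g$ behaves like the structure sheaf of a variety of dimension $\le \dim Y - r - 1$, and Ottem's Lefschetz theorem \cref{th:Lefschetz} (with $\cd(X \setminus Y) = r - 1$) controls the low-degree cohomology comparison between $X$ and $Y$. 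Concretely, I would pull back an ample line bundle (or more precisely a general fiber of $g$) and use the vanishing theorem on abelian varieties of Debarre \cite{Deb95} — this is exactly what the paper flags — to show that the fibers of $g$, viewed as subvarieties of the abelian variety $X$, sweep out $X$ in a way that is rigid: each fiber is numerically trivial against the pullback of an ample class on $Z'$, and by the vanishing theorem the family of such fibers through a moving point is itself (a translate of) a fixed abelian subvariety $B \subset X$. This gives $g$ as (the restriction to $Y$ of) the quotient $X \to X/B$, up to translation and up to the finite map $Z' \to X/B$ coming from normality of $X/B$ and the Stein property.

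Let me be more careful about how the abelian subvariety $B$ is produced, as I expect this to be the crux. Fix a general fiber $F = g^{-1}(z')$; it is a smooth connected subvariety of $X$ of dimension $\dim Y - \dim Z' \ge r + 1$. The subvariety $F \cap Y$-type intersection arguments from \cref{l:Stein} show $F$ is an ample subvariety of a general fiber of the extended map once we have the extension — but to \emph{build} the extension, I would instead argue directly on $X$: the union $\bigcup_{z'} F_{z'}$ of fibers is all of $Y$, and I want to show the "fiber direction" defines a constant subspace of the tangent space $T_0 X$. This is where Debarre's vanishing enters — it forces a line bundle on $X$ that is trivial on every fiber $F_{z'}$ to be the pullback of a line bundle from a quotient abelian variety, and by taking enough such line bundles (pulled back from $Z'$ via $g$ and then, using $\Pic(X) \cong \Pic(Y)$ from \cref{l:Pic-iso}, spread to $X$) one cuts out $B$ as the connected component of the intersection of their "trivial loci". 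The hard part will be showing that the line bundles $g^*(\text{ample on } Z')$, a priori only defined on $Y$, extend to $X$ \emph{and} that their common trivial locus is exactly a translate of an abelian subvariety with the fibers $F_{z'}$ as its cosets — i.e.\ promoting a numerical/cohomological statement into the honest geometric conclusion that $\~\pi$ is, up to translation, the composite $X \to X/B \to Z$. Once $B$ is in hand, $X/B$ is an abelian variety, the induced $X/B \to Z'$ is finite surjective (degree considerations plus $g$ having connected fibers), and composing with $h$ finishes the Stein-factorization claim; uniqueness then follows as noted above.
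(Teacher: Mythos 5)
There are two genuine gaps here, one at the very start and one at the heart of the existence argument.

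First, your opening claim that ample normal bundle implies $Y$ is an ample subvariety of $X$ via \cite[Theorem~5.4]{Ott12} is false: that theorem characterizes ampleness of an lci subvariety by the conjunction of $\cN_{Y/X}$ ample \emph{and} $\cd(X\setminus Y)=r-1$, and the second condition is not part of the hypotheses of the theorem you are proving (nor is it available). So you cannot invoke \cref{th:Alb}, \cref{l:Pic-iso}, or \cref{prop:unique} as stated. The paper gets what it needs differently: since $\dim Y - \dim Z > r$ forces $2\dim Y\ge\dim X$, the subvariety $Y$ is $G3$ in $X$ by \cite[Proposition~4.8]{BS02}, whence $\Pic(X)\to\Pic(Y)$ is injective by \cref{prop:tf-K(^X)}; finiteness of the cokernel comes from Debarre's Barth--Lefschetz theorem \cite[Theorem~4.5]{Deb95} fed into the argument of \cref{l:Pic-iso}; and $\cd(X\setminus Y)\le\dim X-2$ is obtained from geometric non-degeneracy of $Y$ plus \cref{th:Speiser}, not from ampleness of $Y$.

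Second, and more seriously, your existence strategy is circular. You propose to produce an abelian subvariety $B\subset X$ such that the fibers of $g\colon Y\to Z'$ are (cut out by) its cosets, and then define $\~\pi$ as $X\to X/B\to Z$. But the fibers of $g$ are subvarieties of $Y$ of dimension $\dim Y-\dim Z$, while cosets of the relevant $B$ have dimension $\dim X-\dim Z$, so they cannot coincide; what you would actually need is that each fiber equals $Y\cap(a+B)$ for a fixed $B$, and knowing this is essentially equivalent to already having the extension. Debarre's vanishing theorem does not produce $B$ from numerical triviality of line bundles on the fibers of $g$, because those line bundles live on $Y$ and their "trivial loci" do not determine a subgroup of $X$. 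The paper's route is the reverse: it extends the \emph{sections} of $\pi^*\cA$, not just the line bundle, from $Y$ to $X$ — the bijectivity $H^0(X,\cL)\cong H^0(Y,\cL|_Y)$ is the crux, proved in \cref{l:H^0(L)=H^0(L|Y)} using Debarre's vanishing $H^i(Y,\omega_Y\otimes\Sym^t\cN_{Y/X}\otimes\cM)=0$ for $i\ge r$, $t\ge1$ together with $\cd(X\setminus Y)\le\dim X-2$ — then shows the resulting linear system on $X$ is base-point free (any base component of complementary dimension would have to meet $Y$) and has image exactly $Z$. The quotient structure $X\to X/B$ is deduced only \emph{a posteriori} from \cref{l:mapfromA}, applied to the Stein factorization of the already-constructed $\~\pi$. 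Your proposal correctly identifies Debarre's vanishing as the key external input but applies it at the wrong place, and does not supply the step your own sketch flags as "the hard part."
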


\begin{proof}
We may reduce to the case where the ground field is $\C$. 
To see this, assume that $X$ and $Y$ are defined over an 
algebraically closed field $k$, 
and let $K$ be an algebraically closed field extension. Since 
$\cN_{Y_K/X_K} = (\cN_{Y/X})_K$, we see that
$\cN_{Y/X}$ is ample if and only if $\cN_{Y_K/X_K}$ is ample. 
Let $\Hom_k(X,Z)$ be the scheme whose closed points parametrize $k$-morphisms from $X\to Z$.
Note that $\p$ corresponds to a closed point on $\Hom_k(Y,Z)$.
Let $\Hom_k(X,Z;\p)$ be the fiber of the natural restriction morphism $\Hom_k(X,Z)\to \Hom_k(Y,Z)$ over this point.
We have $\Hom_k(X,Z,\pi)\times_{\Spec k}\Spec K\cong \Hom_K(X_K,Z_K,\pi_K)$, where $\pi_K\colon Y_K\to Z_K$ is the base change of $\p$ over $K$.
Thus, $\Hom_k(X,Z,\pi)$ is nonempty if and only if $\Hom_K(X_K,Z_K,\pi_K)$ is nonempty.
Note also that the proof that an extension $\~\p$ of $\p$ satisfies the properties listed in the 
last part of the statement uses \cref{l:mapfromA}, which does not require working over the
complex numbers. 
Therefore both hypothesis and conclusions hold over $k$ if and only if they hold over $K$, 
and this allows us to reduce to the case where the ground field is $\C$.

We may assume that $\dim Z\geq 1$ as otherwise there is nothing to prove, hence $\dim Y\geq r+2$.
By the Barth--Lefschetz theorem on abelian varieties \cite[Theorem~4.5]{Deb95},
$H^q(X,Y;\mathbb{C})=0$ for $q\leq \dim Y-r+1$.
Since $\dim Y-r+1\geq 3$, 
it follows from the proof of \cref{l:Pic-iso} that $\Pic(X)\to\Pic(Y)$ has finite kernel and cokernel.
In fact, since $2\dim Y\geq \dim X$, we see  
by \cite[Proposition~4.8]{BS02} that $Y$ is $G3$ in $X$, that is, $K(X)\cong K(\^X)$ where $\^X$ is the formal completion along $Y$. Therefore $\Pic(X)\to\Pic(Y)$ is injective by \cref{prop:tf-K(^X)}.
Thus, if $\mathcal{A}$ is a sufficiently divisible very ample line bundle on $Z$, 
then there exists a line bundle $\mathcal{L}$ on $X$ such that $\mathcal{L}|_Y\cong\pi^*\mathcal{A}$.

By \cite[Theorem 4.4]{Deb95}, for any nef line bundle $\cM$ on $Y$ we have
\[
H^{i}(Y,\omega_Y\otimes \Sym^{t}\mathcal{N}_{Y/X}\otimes \mathcal{M})=0
\quad\text{for $i\ge r$, $t\ge 1$.}
\]
On the one hand, since $Y$ has ample normal bundle, it is geometrically non-degenerate in $X$, and therefore 
for any subvariety $W\subset X$ with $\dim W +\dim Y\geq \dim X$ we have $W\cap Y\neq \emptyset$, see
\cite[Corollary~2.5]{Deb}.
In particular, the complement $X\setminus Y$ supports no divisors, 
and since, as we already remarked, $Y$ is G3 in $X$, it follows by 
\cref{th:Speiser} that $\cd(X\setminus Y)\leq \dim X - 2$.
Therefore \cref{l:H^0(L)=H^0(L|Y)}
implies that the restriction map 
\begin{equation}
\label{eq:bij}
H^0(X,\cL) \to H^0(Y,\cL|_Y)
\end{equation}
is bijective.

Now take $(\dim Z+1)$-many general sections in $H^0(X,\cL)$.
They cut out a closed subscheme $B'\subset X$, which is either empty or has dimension 
$\ge \dim X-\dim Z - 1$.
By what we observed before about $Y$
meeting every subvariety $W \subset X$ of complementary dimension, 
if $B'$ is nonempty then it must intersect $Y$, 
which contradicts the fact that \eqref{eq:bij} is surjective
and $\cL|_Y$ is globally generated. Thus, $|H^0(X,\cL)|$ is base-point free.

We have the commutative diagram
\[
\xymatrix{
Y \ar[r]^\pi \ar@{^(->}[d] & Z \ar@{^(->}[r] & Z' \ar@{^(->}[r] & P := \mathbb{P}(H^{0}(Z,\cA))\\
X \ar[rru]_{\~\pi}
}
\]
where $Z'$ is the image of $\~\pi$.
If $Z' = Z$, then we are done.
Suppose otherwise that $Z \subsetneq Z'$.
After replacing $\cA$ in the beginning with a sufficiently large multiple, we may assume 
that $\O_{Z'}\cong\~\pi_*\O_X$ \cite[Theorem~2.1.27]{Laz04} and that 
$H^{0}(Z',\cI_Z\otimes\O_P(1)|_{Z'})\neq 0$.
Now, $H^{0}(Z',\O_P(1)|_{Z'})$ contains a section that vanishes on $Z$.
Since $H^{0}(X,\cL) \cong H^{0}(Z',\O_P(1)|_{Z'})$, 
this means that $H^{0}(X,\cL)$
contains a section on $X$ that vanishes on $Y$,
contradicting the bijectivity in \eqref{eq:bij}.

Unicity of the extension $\~\p$ follows from \cref{prop:unique}. 
The last statement of the \lcnamecref{th:X-abelian} follows from \cref{l:mapfromA}.
\end{proof}

\begin{lemma}
\label{l:H^0(L)=H^0(L|Y)}
Let $X$ be an $n$-dimensional smooth projective variety and $Y \subset X$ a smooth subvariety
equipped with a morphism $\p \colon Y \to Z$.
Assume that $\cd(X \setminus Y) \le n - 2$
and that for any globally generated line bundle $\cM$ on $Y$
\begin{equation}
\label{eq:Kodaira-type}
H^{i}(Y,\omega_Y\otimes \Sym^{t}\mathcal{N}_{Y/X}\otimes \mathcal{M})=0
\quad\text{for $i\ge r$ and $t\ge 1$.}
\end{equation}
Then for any line bundle $\cL$ on $X$ such that $\cL|_Y$ is the pull-back of a very ample line bundle $\cA$
on $Z$, the restriction map $H^0(X,\cL) \to H^0(Y,\cL|_Y)$ is a bijection. 
\end{lemma}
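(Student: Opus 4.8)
\textbf{The plan} is to interpose the formal completion $\^{X}$ of $X$ along $Y$, factoring the restriction map as
\[
H^{0}(X,\cL)\ \xrightarrow{\ \alpha\ }\ H^{0}(\^{X},\^{\cL})\ \xrightarrow{\ \beta\ }\ H^{0}(Y,\cL|_{Y}),
\]
and to show that both $\alpha$ and $\beta$ are bijective. For $\beta$ I would use the $\cI_{Y}$-adic filtration: writing $Y_{k}$ for the closed subscheme of $X$ defined by $\cI_{Y}^{k+1}$, one has $H^{0}(\^{X},\^{\cL})=\varprojlim_{k}H^{0}(Y_{k},\cL|_{Y_{k}})$, and the conormal short exact sequences
\[
0\longrightarrow \cL|_{Y}\otimes\Sym^{k+1}\cN_{Y/X}^{\vee}\longrightarrow \cL|_{Y_{k+1}}\longrightarrow \cL|_{Y_{k}}\longrightarrow 0
\]
show that $\beta$ is bijective as soon as $H^{0}\bigl(Y,\cL|_{Y}\otimes\Sym^{k}\cN_{Y/X}^{\vee}\bigr)=H^{1}\bigl(Y,\cL|_{Y}\otimes\Sym^{k}\cN_{Y/X}^{\vee}\bigr)=0$ for all $k\ge 1$, for then every transition map $H^{0}(Y_{k+1},\cL)\to H^{0}(Y_{k},\cL)$ is an isomorphism (and the $\varprojlim^{1}$ term is trivial). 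By Serre duality on the smooth projective variety $Y$, these two groups are dual to the cohomology in degrees $\dim Y$ and $\dim Y-1$ of $\omega_{Y}\otimes\Sym^{k}\cN_{Y/X}\otimes(\cL|_{Y})^{-1}$, which vanish by the Kodaira-type hypothesis \eqref{eq:Kodaira-type} (with $t=k$), provided $\dim Y\ge r+1$ --- a harmless assumption in the applications.

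For $\alpha$, injectivity is automatic, since $\cL$ is torsion free on the integral scheme $X$ and $Y\neq\emptyset$, so a nonzero global section cannot vanish on the whole formal neighbourhood of $Y$. Surjectivity of $\alpha$ is where the assumption $\cd(X\setminus Y)\le n-2$ is used: by the theory of formal functions --- the comparison between $H^{0}$ of a coherent sheaf on a projective variety and on its formal completion along $Y$, which holds when the cohomological dimension of the complement is submaximal (see, e.g., \cite{Har70, SGA2}) --- one gets $H^{0}(X,\cF)\xrightarrow{\ \sim\ }H^{0}(\^{X},\^{\cF})$ for every locally free $\cF$ on $X$, in particular for $\cL$. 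Composing, $\beta\circ\alpha$ is the bijection asserted by the lemma. (One can also see injectivity of the restriction map directly, without invoking $\alpha$: since $\bigcap_{k}\cI_{Y}^{k}\cL=0$, a nonzero section killed on $Y$ would have nonzero leading term in some $H^{0}(Y,\cL|_{Y}\otimes\Sym^{k}\cN_{Y/X}^{\vee})$ with $k\ge 1$, contradicting the vanishing above.)

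\textbf{The main obstacle} I expect is the surjectivity of $\alpha$, i.e.\ descending a formal section of $\cL$ along $Y$ to a genuine section on $X$: this is the only step that uses the cohomological-dimension bound essentially --- rather than just the numerical vanishing on $Y$ --- so it requires citing (or reproving) the right comparison theorem between projective and formal cohomology under a submaximal $\cd$ assumption, and keeping track of its exact range of validity (equivalently, of the vanishing of the local cohomology group $H^{1}_{Y}(X,\cL)$). By contrast, the vanishing statements on $Y$ that drive $\beta$ follow immediately from \eqref{eq:Kodaira-type} together with Serre duality, and the injectivity is elementary.
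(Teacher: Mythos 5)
Your step for $\beta$ contains a genuine gap, and it is not a removable one. After Serre duality, the vanishing you need for the transition maps in the inverse limit is
\[
H^{\dim Y}\bigl(Y,\omega_Y\otimes\Sym^{k}\cN_{Y/X}\otimes(\cL|_Y)^{-1}\bigr)=H^{\dim Y-1}\bigl(Y,\omega_Y\otimes\Sym^{k}\cN_{Y/X}\otimes(\cL|_Y)^{-1}\bigr)=0 ,
\]
but the twist here is $(\cL|_Y)^{-1}=\p^*\cA^{-1}$, the \emph{dual} of a globally generated line bundle. The hypothesis \eqref{eq:Kodaira-type} is only assumed for globally generated $\cM$, so you cannot invoke it with $\cM=(\cL|_Y)^{-1}$. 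That this is not a cosmetic issue is shown by the fact that your argument never uses any relation between $\dim Z$ and $\dim Y$, whereas the conclusion genuinely requires one: take $X=\P^2$, $Y$ a smooth conic, $\p=\mathrm{id}_Y$, $\cA=\cO_{\P^1}(4)$, $\cL=\cO_{\P^2}(2)$; all the stated hypotheses hold ($\cd(X\setminus Y)=0$ and Kodaira vanishing gives \eqref{eq:Kodaira-type}), yet $h^0(X,\cL)=6\neq 5=h^0(Y,\cL|_Y)$. The lemma is applied in the paper only when $\dim Y-\dim Z>r$, and that inequality must enter the proof. The paper bridges exactly this gap with the Leray spectral sequence for $\p$: the hypothesis (for globally generated twists) yields $R^j\p_*(\omega_Y\otimes\Sym^t\cN_{Y/X})=0$ for $j\ge r$ via \cite[Lemma~4.3.10]{Laz04}, and in the spectral sequence $H^i(Z,R^j\p_*(\omega_Y\otimes\Sym^t\cN_{Y/X})\otimes\cA^{-1})$ the surviving terms with $j<r$ have $i\ge \dim Y-1-(r-1)>\dim Z$, hence vanish for dimension reasons. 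You would need to insert this argument (and the hypothesis $\dim Z<\dim Y-r$) to make $\beta$ work.

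Your treatment of $\alpha$ is essentially sound and is a legitimate variant of what the paper does: you pass through the formal completion and use formal duality plus $\cd(X\setminus Y)\le n-2$ to identify $H^0(X,\cL)$ with $H^0(\^X,\^\cL)$, whereas the paper performs the equivalent computation on the blow-up of $X$ along $Y$, showing $H^{n-j}(\~X,\omega_{\~X}(tE)\otimes\~\cL^*)=0$ for $j=0,1$ by comparing with cohomology on $\~X\setminus E$. The two routes carry the same content (the $\cd$ bound kills the relevant local-cohomology obstruction); the blow-up formulation has the advantage that the groups $H^{n-1-k}(E,\omega_E(tE)\otimes\~\cL^*)$ are literally the ones controlled by the Leray argument above, so both halves of the proof are handled in one pass.
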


\begin{proof}
Let $\s \colon \~X \to X$ be the blow-up of
$X$ along $Y$, with exceptional divisor $E$. 
By definition of ampleness of $Y$ in $X$, $E$ is an $(r-1)$-ample divisor on $\~X$. 
Note that $E \cong \P(\cN_{Y/X}^*)$. Let $\~\cL := \s^*\cL$ be the pull-back of $\cL$.
We claim that
\begin{equation} 
\label{eq:desired-vanishing}
H^{n-1-k}\bigl(E,\omega_E(tE) \otimes \~\cL^*\bigr)=0 
\quad \text{for $t \ge 1$ and $k=0,1$.}
\end{equation}
Note that these cohomology groups are isomorphic to
$H^{n-r-k}(Y,\omega_Y \otimes \Sym^t\cN_{Y/X} \otimes \cL^*)$.
Here, we implicitly used the fact that working over a field of characteristic zero, 
we have $(\Sym^t\cN_{Y/X}^*)^* \cong \Sym^t\cN_{Y/X}$.

In order to prove \eqref{eq:desired-vanishing}, first observe that our hypothesis 
\eqref{eq:Kodaira-type} imply, by \cite[Lemma~4.3.10]{Laz04}, that
\begin{equation}
\label{eq:R-vanishing}
R^j\p_*(\omega_Y \otimes \Sym^t\cN_{Y/X})=0
\quad\text{for $j \ge r$ and $t\ge 1$}.
\end{equation}
Then the Leray spectral sequence gives
\begin{equation}
\label{eq:Leray}
H^i(Z,R^j\p_*(\omega_Y\otimes\Sym^t\cN_{Y/X})\otimes\cA^*)
\Rightarrow
H^{n-r-k}(Y,\omega_Y\otimes\Sym^t\cN_{Y/X}\otimes\cL^*)
\end{equation}
for $k=0,1$, where $i+j=n-r-k$. The relative vanishing \eqref{eq:R-vanishing} 
kills the terms on the left-hand side of \eqref{eq:Leray} with $j\ge r$, and those with 
$j < r$ and $i \ge n-2r$ vanish since $\dim Z < n - 2r$. 
Thus, we have $H^{n-r-k}(Y,\omega_Y\otimes\Sym^t\cN_{Y/X}\otimes\cL^*)=0$, 
and \eqref{eq:desired-vanishing} follows.

Consider the following exact sequence
\[
0 \to \omega_{\~X}(tE)\otimes\~\cL^* \to \omega_{\~X}\bigl((t+1)E\bigr)\otimes\~\cL^* \to 
\omega_E(tE)\otimes\~\cL^*|_E \to 0.
\]
By \eqref{eq:desired-vanishing}, we have the bijectivity of
\[
H^{n-k}(\~X,\omega_{\~X}(tE)\otimes\~\cL^*) \to 
H^{n-k}(\~X,\omega_{\~X}\bigl((t+1)E\bigr)\otimes\~\cL^*)
\quad\text{for $t \ge 1$ and $k=0,1$.}
\]
It follows then by \cite[Equation~(5.1)]{Ott12} that
\[
H^{n-j}(\~X,\omega_{\~X}(tE)\otimes\~\cL^*) \cong H^{n-j}(\~X\setminus E,(\omega_{\~X}\otimes\~\cL^*)|_{\~X-E})
\quad\text{for $j=0,1$ and $t \ge 1$}.
\]
Since, by assumption, $\cd(\~X\setminus E)=\cd(X\setminus Y)\leq n-2$, we have
$H^{n-j}(\~X\setminus E,(\omega_{\~X}\otimes\~\cL^*)|_{\~X-E})=0$ for $j=0,1$, hence
$H^{n-j}(\~X,\omega_{\~X}(tE)\otimes\~\cL^*)=0$ for $j=0,1$ and $t \ge 1$.
This implies that the map $H^0(X,\cL) \to H^0(Y,\cL|_Y)$ is bijective.
\end{proof}

\begin{lemma}
\label{prop:unique}
Let $X$ be a projective variety
and $Y \subset X$ an ample subvariety equipped with
a dominant morphism $\p \colon Y \to Z$. 
Assume that $\dim Y > \max\{1,\dim Z\}$.
Then any extension $\~\p \colon X \to Z$ of $\p$, if it exists, is unique. 
\end{lemma}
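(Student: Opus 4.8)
The plan is to exploit the fact that an ample subvariety is $G3$ in $X$ (by \cite[Corollary~5.6]{Ott12}), so that $K(X) = K(\widehat X)$, and to translate uniqueness of the extension into an agreement-on-a-formal-neighborhood statement. Concretely, suppose $\~\p_1, \~\p_2 \colon X \to Z$ are two morphisms both restricting to $\p$ on $Y$. First I would compose with a locally closed embedding: choose a very ample line bundle $\cA$ on $Z$, giving $Z \hookrightarrow \P^N$, and replace $\p$, $\~\p_1$, $\~\p_2$ by their compositions with this embedding. Since a morphism into projective space is determined by a line bundle together with a chosen basis of sections (up to the obvious equivalence), it suffices to show that the pulled-back data $(\~\p_1^*\O(1), \~\p_1^*s_0, \dots, \~\p_1^*s_N)$ and $(\~\p_2^*\O(1), \~\p_2^*s_0, \dots, \~\p_2^*s_N)$ on $X$ coincide. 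Both restrict on $Y$ to $(\p^*\O(1), \p^*s_0, \dots, \p^*s_N)$, the same data.

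The key step is then to show that the two morphisms $\~\p_i$ agree on the formal completion $\widehat X$ of $X$ along $Y$. On the formal scheme $\widehat X$, each $\~\p_i$ induces a morphism $\widehat{\~\p_i}\colon \widehat X \to Z$ (since $Z$ is projective, hence proper, and $\widehat X$ maps to it through its thickenings $Y_n$). But $\widehat{\~\p_1}$ and $\widehat{\~\p_2}$ both restrict to $\widehat\p$ on the closed fiber $Y$; I want to conclude they are equal as morphisms of formal schemes. This is where a lifting/deformation argument enters: inductively over the infinitesimal neighborhoods $Y_n \subset Y_{n+1}$, the difference between two extensions of a morphism $Y_n \to Z$ to $Y_{n+1} \to Z$ (agreeing on $Y_n$) is controlled by $\Hom_{\O_{Y}}(\p^*\Omega^1_Z, \cI^n/\cI^{n+1})$ where $\cI$ is the ideal of $Y$; the point is not that this group vanishes, but that the two given extensions $\~\p_1|_{Y_{n+1}}$ and $\~\p_2|_{Y_{n+1}}$ are \emph{a priori} restrictions of honest morphisms from $X$, so their difference lies in the image of the restriction map from the corresponding global $\Hom$-group, and one can match them up. Rather than pushing this deformation-theoretic bookkeeping through directly, the cleaner route is: the functions $\~\p_1^*s_j / \~\p_1^*s_k$ and $\~\p_2^*s_j / \~\p_2^*s_k$ are rational functions on $X$ (defined on the open locus where the relevant section is nonzero) whose images in $K(\widehat X)$ agree, because they agree on $Y$ and hence on each $Y_n$ by a straightforward induction using that $Y$ is reduced and the formal functions separate. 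Since $Y$ is $G3$, $K(X) \to K(\widehat X)$ is an isomorphism, so these rational functions are already equal in $K(X)$, i.e. $\~\p_1^*s_j/\~\p_1^*s_k = \~\p_2^*s_j/\~\p_2^*s_k$ as rational functions on $X$ for all $j,k$.

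Finally I would promote the equality of rational functions to equality of morphisms. Two morphisms $X \to \P^N$ that agree as rational maps — i.e. induce the same map $X \dashrightarrow \P^N$ — and are both everywhere defined must coincide, since a morphism is determined by its restriction to any dense open subset when the target is separated and the source is reduced (indeed irreducible, as $X$ is a variety): the equalizer of $\~\p_1$ and $\~\p_2$ is closed, contains a dense open set, hence is all of $X$. This yields $\~\p_1 = \~\p_2$ as maps $X \to Z \subset \P^N$, completing the proof. The main obstacle is the middle step — showing the pulled-back section ratios have the same image in $K(\widehat X)$; this requires care because $Y$ may be non-reduced or have several components a priori, but under the standing hypothesis ($Y$ ample, so in particular $G3$ and connected, and $\dim Y > \max\{1,\dim Z\}$ guaranteeing $\p$ is dominant so the chosen ratios are not identically zero on $Y$) the induction over infinitesimal neighborhoods goes through, and the hypothesis $\dim Y > \dim Z$ ensures the section ratios restricted to $Y$ are genuine nonconstant rational functions detecting the map, so no information is lost in passing to $K(\widehat X)$.
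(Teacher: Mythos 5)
There is a genuine gap at the heart of your argument. The step asserting that the ratios $\~\p_1^*(s_j/s_k)$ and $\~\p_2^*(s_j/s_k)$ ``agree on $Y$ and hence on each $Y_n$ by a straightforward induction'' is false: two regular (or rational) functions that agree on $Y$ need not agree on the first infinitesimal neighborhood $Y_1$, so the induction fails at its very first step. Concretely, for $Y=V(x)\subset \A^2$ the functions $y$ and $y+x$ agree on $Y$ but differ modulo $\cI^2$; in other words, restriction from $\^X$ to $Y$ is nowhere near injective, and the $G3$ property only identifies $K(\^X)$ with $K(X)$ --- it says nothing about a formal function being determined by its values on the reduced fiber. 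The deformation-theoretic aside you offer in its place is not a proof either: the group $\Hom_{\O_Y}(\p^*\Omega^1_Z,\cI^n/\cI^{n+1})$ genuinely need not vanish, and ``one can match them up'' is exactly the assertion that needs proving. In effect your argument, if it worked, would show that \emph{any} morphism $X\to Z$ is determined by its restriction to \emph{any} closed subvariety $Y$ with $K(X)=K(\^X)$, with no use of the numerical hypothesis $\dim Y>\max\{1,\dim Z\}$ or of the Picard-group results of the paper --- a sign that something essential is missing.

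The paper's proof takes a different and genuinely global route. After reducing via the Stein factorization (\cref{l:Stein}) to the case where $\p$ is an algebraic fiber space, one observes that for a very ample $\cA$ on $Z$ the line bundle $\cL:=\~\p^*\cA$ is \emph{independent of the extension} $\~\p$, because $\cL|_Y\cong\p^*\cA$ and $\Pic(X)\to\Pic(Y)$ is injective by \cref{l:Pic-iso} (applicable since $\dim Y\ge 2$). Since $\~\p$ is again a fiber space (\cref{l:afs}), $H^0(X,\cL)\cong H^0(Z,\cA)$, so every section of $\cL$ is pulled back from $Z$ and $\~\p$ is recovered from the canonical complete linear system $|H^0(X,\cL)|$. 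The data that pins down the extension is thus the line bundle $\cL$ together with its global sections --- information controlled by the Grothendieck--Lefschetz theorem --- not the behavior of $\~\p$ on the formal neighborhood of $Y$. Your final step (two everywhere-defined morphisms to a separated target agreeing on a dense open set coincide) is fine, but you never reach a correct statement to feed into it.
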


\begin{proof}
By \cref{l:Stein}, we can take the Stein factorization and reduce to the 
case where $\p$ is an algebraic fiber space. 
Let $\mathcal{A}$ be a very ample line bundle on $Z$ and $\mathcal{L}:=\~\p^*\mathcal{A}$ be its pull-back to $X$.
Since by \cref{l:Pic-iso} the restriction map $\Pic(X)\to \Pic(Y)$ is injective,
the line bundle $\mathcal{L}$ is independent of the choice of extension $\~\p$ of $\p$.
Thanks to the fact that $\~\p$ is an algebraic fiber space, we have the isomorphism $H^0(X,\mathcal{L})\cong H^0(Z,\mathcal{A})$.
Thus, the morphism $\~\p\colon X\to Z$ is determined by the complete linear system $|H^0(X,\mathcal{L})|$ and therefore it is unique.
\end{proof}

\begin{lemma}
\label{l:mapfromA}
Let $A$ be an abelian variety and $\pi\colon A\to W$ an algebraic fiber space 
(i.e., a morphism such that $\O_W \cong \pi_*\O_A$).
Then $W$ is an abelian variety and, up to translation, $\pi$ is a morphism of abelian varieties.
\end{lemma}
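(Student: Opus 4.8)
The plan is to prove that for an algebraic fiber space $\pi\colon A\to W$ out of an abelian variety $A$, the target $W$ is an abelian variety and $\pi$ is, up to translation, a homomorphism. First I would normalize the situation by translating so that the origin $0\in A$ maps to a distinguished point $w_0 := \pi(0) \in W$; after composing $\pi$ with the translation by $-w_0$ it suffices to treat the case $\pi(0) = w_0$ and show $W$ carries an abelian variety structure making $\pi$ a homomorphism (with $w_0$ as identity). Since $\pi$ is an algebraic fiber space, $W$ is normal (it equals $\BroadSpec \pi_*\O_A$, and $\pi_*\O_A = \O_W$), projective, and $\pi$ has connected fibers.

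The key point is to produce the group law on $W$. I would consider the composition $A\times A\xrightarrow{m} A\xrightarrow{\pi} W$, where $m$ is the addition on $A$, and argue that this descends through $\pi\times\pi$ to give a morphism $\mu\colon W\times W\to W$. To see this, note $\pi\times\pi\colon A\times A\to W\times W$ is again an algebraic fiber space (by flat base change, $(\pi\times\pi)_*\O_{A\times A}\cong \O_W\boxtimes\O_W = \O_{W\times W}$), hence any morphism $A\times A\to W$ that is constant on the fibers of $\pi\times\pi$ factors uniquely through it. So the real content is the rigidity claim: $\pi\circ m$ is constant on fibers of $\pi\times\pi$. A fiber of $\pi\times\pi$ over $(u,v)$ is $F_u\times F_v$ where $F_u = \pi^{-1}(u)$, $F_v = \pi^{-1}(v)$ are fibers of $\pi$; these are connected closed subvarieties of $A$. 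Restricting $m$ gives $F_u\times F_v\to A$, $(a,b)\mapsto a+b$; I want $\pi$ to be constant on its image $F_u + F_v$. For fixed $a\in F_u$, the map $b\mapsto \pi(a+b)$ on the connected projective variety $F_v$ factors through $\BroadSpec$ of its structure sheaf pushforward; but more directly, $F_u + F_v$ is the image of the connected projective variety $F_u\times F_v$ under a morphism to $A$, and I claim it lies in a single fiber of $\pi$. The cleanest route: the fiber $F_v$ is a connected component of a fiber of an algebraic fiber space, so $\O_{F_v}$ has no nonconstant global functions; translating, $F_v + a$ for any $a$ likewise maps into a single fiber of $\pi$ provided $\pi$ is constant along each translate of a fiber — and translates of a fiber sweep out $A$, so it suffices to know that $\pi$ restricted to $F_v + a$ is constant for all $a$, which follows because $F_v + a$ is a connected projective subvariety isomorphic to $F_v$ and, being swept by the fibration, the map to $W$ lands in $\BroadSpec(\O_{F_v})= $ a point only once we know fibers don't meet several $W$-points; this forces using properness of $\pi$ and rigidity. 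I would therefore invoke the rigidity lemma (Mumford) in the form: if $f\colon V\times T\to S$ is a morphism, $V$ proper and connected, and $f(V\times\{t_0\})$ is a single point, then $f$ factors through the projection $V\times T\to T$ — applied with $V = F_v$ (or $F_u$) to get the factorization of $\pi\circ m$ through the second (resp. first) coordinate on each fiber, hence constancy on $F_u\times F_v$.

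Once $\mu\colon W\times W\to W$ exists, I would verify it is a commutative group law with identity $w_0$: associativity, commutativity, and the identity axiom all hold after pulling back along the dominant (indeed surjective, flat) morphism $\pi$ and its powers, because they hold for $m$ on $A$ and morphisms from a reduced scheme to a separated scheme are determined by their values on a dense open set. For the inverse, apply the same descent argument to $\pi\circ(-1_A)\colon A\to W$: it is constant on fibers of $\pi$ (each fiber $F_u$ has $-F_u$ a connected projective subvariety mapping to a point of $W$ by rigidity again, or simply because $-1_A$ permutes fibers since... here one must check $-F_u$ is a fiber, which follows once we know the group law on $W$ and that $\pi$ is a homomorphism — so better to get inversion from the group-scheme criterion: a projective variety with an associative, commutative multiplication with two-sided identity and with all translations being isomorphisms is an abelian variety, and translations $\mu(-,w)$ on $W$ are isomorphisms because they are dominated by the translations on $A$ which are isomorphisms). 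Then $\pi$ is a homomorphism by construction of $\mu$, and reintroducing the initial translation by $w_0$ gives the statement. The main obstacle is the rigidity step — cleanly justifying that $\pi\circ m$ is fiberwise constant on $F_u\times F_v$; I expect to handle it by a direct application of Mumford's rigidity lemma to the proper connected variety $F_u$ (and symmetrically $F_v$), using that $\pi$ is constant on each single fiber by definition.
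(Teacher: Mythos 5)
Your argument is correct, but it takes a genuinely different route from the paper. The paper never constructs a group law on $W$: it first shows that a birational algebraic fiber space out of an abelian variety is an isomorphism (a contracted curve $C$ has $\pi^*\cA$ numerically trivial on every translate $a+C$, since translation preserves algebraic equivalence, so all translates would be contracted), then observes that a general fiber $A_w$ has trivial normal bundle, hence trivial tangent bundle, hence is a translate $a+B$ of an abelian subvariety $B$ by the characteristic-zero classification cited from \cite{MS87}; the same algebraic-equivalence argument shows every translate $a'+B$ is contracted, so $\pi$ factors through the quotient $A\to A/B$, and the induced birational map $A/B\to W$ is an isomorphism by the first step. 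Your approach instead descends the addition map: the decisive application of Mumford's rigidity lemma to $F_v\times A\to W$, $(b,a)\mapsto\pi(a+b)$, based at $a=0$, does give that $\pi$ is constant on every translate $F_v+a$, hence that $\pi\circ m$ is constant on the fibers of $\pi\times\pi$ and descends to $\mu\colon W\times W\to W$; this is a complete and correct resolution of the step you flag as the main obstacle. What each buys: the paper's proof is shorter modulo the quoted classification of varieties with trivial tangent bundle and exhibits $W$ explicitly as $A/B$; yours is more self-contained (only rigidity plus descent of morphisms along fiber spaces with connected fibers onto a normal target), avoids that classification, and would work in any characteristic. Two details you should tighten: fibers of $\pi$ may a priori be non-reduced, so apply rigidity to $(F_v)_{\mathrm{red}}\times A$ (the set-theoretic conclusion is all you need); and for the inverse morphism, rather than descending $\pi\circ(-1_A)$, note that each translation $\mu(u,-)$ is surjective (dominated by $t_a$) and injective (if $\mu(u,w_1)=\mu(u,w_2)$ then $b_1,b_2$ lie in a translate $F-a$ of a fiber, on which $\pi$ is constant by the rigidity step), hence an isomorphism by Zariski's main theorem since $W$ is normal; the inverse is then a morphism by the usual graph argument, and the complete connected group variety $W$ is abelian.
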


\begin{proof}
First, let us show that if $\pi$ is birational then it is an isomorphism.
Suppose $C$ is an irreducible curve contracted by $\pi$.
Let $\mathcal{A}$ be an ample line bundle on $W$ and $\mathcal{L}:=\pi^*\mathcal{A}$ be its pull-back to $A$.
Then $\mathcal{L}|_C$ is trivial.
Let $a$ be an arbitrary closed point of $A$ and 
$t_{-a}\colon A\to A$ be the translation by $-a$.
Identifying $C$ and $a+C$ via the translation map $t^{-a}$,
we have $\mathcal{L}|_{a+C}\cong t^*_{-a}\mathcal{L}|_C$.
Note that $t^*_{-a}\mathcal{L}$ is algebraically equivalent to $\mathcal{L}$,
so $\mathcal{L}|_{a+C}$ is numerically trivial.
Therefore, $a+C$ is contracted by $\pi$ as well.
Since $a$ is arbitrary, this shows that if $\pi$ is birational then
$\pi$ has to be an isomorphism. 

Now assume that $\dim A>\dim W$.
Take the fiber $A_w$ over a general closed point $w\in W$.
Note that $A_w$ is smooth and has trivial normal bundle.
Therefore the tangent bundle of $A_w$ is trivial as well.
It is a fact that a smooth projective variety over an algebraically closed field of characteristic zero with trivial tangent bundle has to be isomorphic to an abelian variety \cite[Page~191]{MS87}.
Therefore, $A_w$ is isomorphic to an abelian variety $B$.
We may identify $B$ with an abelian subvariety of $A$
so that $A_w=a+B$ for some closed point $a\in A$.
We claim that $\pi$ factors through the quotient morphism $A\to A/B$.
To see this, note that the pull-back of any ample line bundle on 
$W$ is numerically trivial on $a'+B$ for any closed point $a'\in A$.
This implies that $a'+B$ is contracted by $\pi$, 
thus $\pi$ factors through $A\to A/B$.
The induced map $A/B\to W$ is birational and therefore is an isomorphism by the above argument.
\end{proof}

\section{Sommese's extendability conjecture}

As in the previous section, we work over an algebraically closed field of characteristic zero.
It was conjectured by Sommese that if $X$ is a smooth projective variety
and $Y \subset X$ is a smooth subvariety of codimension $r$
defined by the vanishing of a regular section of an ample
vector bundle $\cE$ on $X$, 
then any morphism $\p \colon Y \to Z$ with $\dim Y - \dim Z > r$
extends to a morphism $\~\p \colon X \to Z$ (see \cite[Page~71]{Som76}).
Here, we consider the conjecture only assuming that $Y$ is an ample subvariety. Notice that, 
as we discussed before, subschemes defined by regular sections of ample vector bundles are 
ample in the ambient variety.
We restate Sommese's conjecture in the context of ample subvarieties as follows.

\begin{conjecture}
\label{conj:Som}
Let $X$ be a smooth projective variety 
and $Y \subset X$ a smooth ample subvariety of codimension $r$.
Then any morphism $\p \colon Y \to Z$ with $\dim Y - \dim Z > r$
extends uniquely to a morphism $\~\p \colon X \to Z$. 
\end{conjecture}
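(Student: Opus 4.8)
The plan is to imitate, step for step, the proof of \cref{th:X-abelian}, substituting the general results of \cref{s:Pic} wherever that proof used facts special to abelian ambient varieties, and then to isolate the single cohomological input which is not available in general — this is precisely what keeps the assertion a conjecture.

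First I would dispose of trivialities and fix the numerology. We may assume $\dim Z\ge1$, so that $\dim Y\ge r+2\ge3$ and, $Y$ being ample of codimension $r$, $\cd(X\setminus Y)=r-1\le n-4$. Replacing $Z$ by the closure of $\p(Y)$ we may assume $\p$ surjective, and replacing $Z$ by the target of the Stein factorization of $\p$ — a finite modification, so that $\dim Z$, and hence the inequality $\dim Y-\dim Z>r$, is unchanged — we may further assume $\O_Z\cong\p_*\O_Y$. Next I would produce the line bundle on $X$: starting from a very ample $\cA$ on $Z$, the fact that $\cd(X\setminus Y)\le n-4$ gives, by \cref{l:Pic-iso}, that $\Pic(X)\to\Pic(Y)$ is injective with finite cokernel, so after replacing $\cA$ by a suitable power I may find $\cL\in\Pic(X)$ with $\cL|_Y\cong\p^*\cA$; since $\p$ is now an algebraic fiber space, $H^0(Y,\cL|_Y)\cong H^0(Z,\cA)$.

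The decisive point would then be the bijectivity of the restriction $H^0(X,\cL)\to H^0(Y,\cL|_Y)$. This is exactly the conclusion of \cref{l:H^0(L)=H^0(L|Y)}: its hypothesis $\cd(X\setminus Y)\le n-2$ is clear, its requirement that $\dim Z<n-2r$ is our hypothesis $\dim Y-\dim Z>r$, and the remaining hypothesis is the Kodaira-type vanishing
\begin{equation}
\label{eq:Som-obstacle}
H^i\bigl(Y,\,\omega_Y\otimes\Sym^t\cN_{Y/X}\otimes\cM\bigr)=0\qquad\text{for }i\ge r,\ t\ge1,
\end{equation}
for every globally generated line bundle $\cM$ on $Y$. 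Granting \eqref{eq:Som-obstacle}, the rest would copy the proof of \cref{th:X-abelian}: cutting $X$ by $\dim Z+1$ general members of $|H^0(X,\cL)|$ yields a subscheme $B'$ which, if nonempty, has dimension $\ge n-\dim Z-1\ge r$ and therefore meets $Y$ (any closed subvariety of $X$ of dimension $\ge r$ meets $Y$, since $\cd(X\setminus Y)=r-1$), contradicting the surjectivity just obtained together with the base-point-freeness of $\cL|_Y=\p^*\cA$; hence $|H^0(X,\cL)|$ is base-point free and defines a morphism $\~\p\colon X\to P:=\P(H^0(Z,\cA))$ with $\~\p|_Y$ equal to $\p$ followed by the embedding $\iota\colon Z\hookrightarrow P$. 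The image $Z'$ of $\~\p$ contains $\iota(Z)$; if it were strictly larger, then after enlarging $\cA$ one may assume $\~\p_*\O_X\cong\O_{Z'}$ and $H^0\bigl(Z',\cI_{Z/Z'}\otimes\O_P(1)|_{Z'}\bigr)\ne0$, and pulling back by $\~\p$ a nonzero section of the latter sheaf produces a nonzero section of $\cL$ vanishing on $Y$, contradicting injectivity; so $Z'=\iota(Z)$ and $\~\p$ extends $\p$. Uniqueness of $\~\p$ would follow from \cref{prop:unique}, which applies since $\dim Y>\max\{1,\dim Z\}$.

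The hard part — and the reason \cref{conj:Som} is still only a conjecture — is \eqref{eq:Som-obstacle}. For an arbitrary smooth projective $Y$ with ample normal bundle the bundle $\Sym^t\cN_{Y/X}$ is ample, but \eqref{eq:Som-obstacle} is a Nakano/Le Potier–type vanishing twisted only by a globally generated line bundle, with no compensating copy of $\det\cN_{Y/X}$, and such a statement does not follow from Griffiths' vanishing theorem — ample vector bundles need not satisfy Nakano-type vanishing. It does hold when $X$ is an abelian variety, via Debarre's vanishing theorem \cite{Deb95}; this is exactly how \eqref{eq:Som-obstacle} enters the proof of \cref{th:X-abelian}. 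Combining Debarre's theorem with Manivel's vanishing \cite{Man96} and the structure of toric and abelian varieties, one can also verify \eqref{eq:Som-obstacle}, hence \cref{conj:Som}, whenever $Y$ or the general fiber of $\p$ is assembled from such pieces (see \cref{th:special-Y,th:fiber-pi-special}). Establishing \eqref{eq:Som-obstacle}, or \cref{conj:Som}, in general remains open.
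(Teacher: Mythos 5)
Your proposal does not (and cannot) prove the statement, since it is an open conjecture; but it correctly reproduces, essentially verbatim, the paper's own conditional treatment: the reduction via \cref{l:Pic-iso} and \cref{l:H^0(L)=H^0(L|Y)} to the Kodaira-type vanishing $H^i(Y,\omega_Y\otimes\Sym^t\cN_{Y/X}\otimes\cM)=0$ for $i\ge r$, $t\ge 1$ is exactly \cref{th:Kodaira-type}, and the base-point-freeness and image arguments match those in \cref{th:X-abelian}. You also correctly identify that this vanishing is precisely the missing ingredient, known only in the special situations of \cref{th:X-abelian,th:special-Y,th:fiber-pi-special} (and, per \cref{r:R-supp-dim}, weakenable to a support-dimension condition on the higher direct images), so your account is an accurate reflection of the state of the problem rather than a proof of it.
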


The case $r=1$ of the conjecture (i.e., when $Y$ is an ample divisor) was proved in \cite{Som76}.
The condition that $\dim Y - \dim Z > r$ is sharp.
When $r=1$, this is discussed in \cite[Section~3]{BI09}, and 
the construction given there is generalized to arbitrary codimension $r$, see \cite{dFL}.

\cref{th:X-abelian} shows that \cref{conj:Som} holds when $X$ is an abelian variety. 
By a similar argument, which in fact extends the proof of \cite[Proposition~III]{Som76} (see in particular
the proof given in \cite[Theorem~3.1]{BI09}), we obtain the following general
condition for a morphism to extend.

\begin{proposition}
\label{th:Kodaira-type}
Let $X$ be a smooth projective variety 
and $Y \subset X$ a smooth ample subvariety of codimension $r$.
Assume that for any globally generated line bundle $\cM$ on $Y$ we have
\begin{equation}
\label{eq:Kodaira-type-2}
H^i(Y,\omega_Y \otimes \Sym^t \cN_{Y/X} \otimes \cM)=0
\quad\text{for $i \ge r$, $t \ge 1$.}
\end{equation}
Then any morphism $\p \colon Y \to Z$ with $\dim Y - \dim Z > r$
extends uniquely to a morphism $\~\p \colon X \to Z$. 
\end{proposition}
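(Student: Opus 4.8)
The plan is to reprise the argument in the proof of \cref{th:X-abelian}, with \cref{l:Pic-iso} playing the role of the Barth--Lefschetz theorem on abelian varieties and the hypothesis \eqref{eq:Kodaira-type-2} that of Debarre's vanishing theorem; the two technical inputs \cref{l:H^0(L)=H^0(L|Y),prop:unique} will then be used essentially verbatim.

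First I would make two preliminary simplifications. We may assume $\dim Z\ge 1$, since otherwise there is nothing to prove; then $\dim Y\ge r+2\ge 3$, so $n=\dim Y+r\ge 2r+2$, and in particular $\dim Z<n-2r$ while $\cd(X\setminus Y)\le r-1\le n-2$. Moreover, replacing $Z$ by the Stein factorization of $\p$ — a projective variety finite over the original $Z$, so that an extension over it, composed with the finite map, yields an extension over $Z$ — we may assume $\p$ is an algebraic fiber space.

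Next I would produce the line bundle on $X$ to work with. Since $Y$ is an ample subvariety of dimension $\ge 3$, \cref{l:Pic-iso} shows that $\Pic(X)\to\Pic(Y)$ is injective with finite cokernel; hence, after replacing a very ample line bundle $\cA$ on $Z$ by a sufficiently divisible multiple, there is $\cL\in\Pic(X)$ with $\cL|_Y\cong\p^*\cA$. All the hypotheses of \cref{l:H^0(L)=H^0(L|Y)} are met ($X$ and $Y$ smooth, $\cd(X\setminus Y)\le n-2$, $\dim Y-\dim Z>r$, and \eqref{eq:Kodaira-type-2}), so $H^0(X,\cL)\to H^0(Y,\cL|_Y)$ is bijective, and since $\p$ is an algebraic fiber space, $H^0(Y,\cL|_Y)=H^0(Y,\p^*\cA)=H^0(Z,\cA)$. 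Then I would run the endgame of \cref{th:X-abelian}. A collection of $\dim Z+1$ general sections of $\cL$ cannot have a common zero: their zero locus $B'$ would otherwise have dimension $\ge n-\dim Z-1\ge r$ and hence meet $Y$ (a closed subvariety of $X$ disjoint from $Y$ has dimension at most $\cd(X\setminus Y)\le r-1$), whereas $\dim Z+1$ general sections of $\cL|_Y=\p^*\cA$ have empty zero locus on $Y$, contradicting the bijectivity above. Thus $|\cL|$ is base-point free, and it defines a morphism $\f\colon X\to P:=\P(H^0(Z,\cA))$ with $\f|_Y=(Z\hookrightarrow P)\circ\p$. Writing $Z'=\f(X)\supseteq Z$, suppose $Z\subsetneq Z'$; after enlarging $\cA$ so that $\f_*\O_X\cong\O_{Z'}$ and $H^0(Z',\cI_Z\otimes\O_P(1)|_{Z'})\ne 0$, a section of $\O_P(1)|_{Z'}$ vanishing along $Z$ pulls back to a nonzero section of $\cL$ vanishing along $Y$, again contradicting bijectivity. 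Hence $Z'=Z$, so $\~\p:=\f$ is the desired extension; uniqueness follows from \cref{prop:unique}, whose hypotheses hold since $\dim Y>\max\{1,\dim Z\}$.

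I expect the only real work to be the bookkeeping in the previous paragraph — verifying that the numerical hypotheses of \cref{l:Pic-iso,l:H^0(L)=H^0(L|Y),prop:unique} are all met under the sole assumptions that $Y$ is a smooth ample subvariety, $\dim Y-\dim Z>r$, and \eqref{eq:Kodaira-type-2}. There is essentially no new content beyond \cref{th:X-abelian}, since there the abelian variety structure of $X$ was used only to invoke the two vanishing theorems that are now absorbed into (or implied by) the hypotheses.
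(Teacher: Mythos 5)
Your proposal is correct and follows essentially the same route as the paper's proof: Stein factorization to reduce to an algebraic fiber space, \cref{l:Pic-iso} to descend a multiple of $\p^*\cA$ to a line bundle $\cL$ on $X$, \cref{l:H^0(L)=H^0(L|Y)} for the bijectivity of $H^0(X,\cL)\to H^0(Y,\cL|_Y)$, the cohomological-dimension bound $\cd(X\setminus Y)=r-1$ to rule out base points, the same image-comparison endgame, and \cref{prop:unique} for uniqueness. The numerical bookkeeping ($\dim Y\ge r+2$, $\dim Z<n-2r$, $\dim B'\ge 2r>r-1$) is also carried out correctly.
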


\begin{proof}
As the statement is trivial if $Z$ is a point, 
we may assume that $\dim Z \ge 1$ and hence that $\dim Y \ge 3$.
After taking Stein factorization, we may also assume that the natural map $\O_Z\to\pi_*\O_Y$
is an isomorphism.
Let $\cA$ be a very ample line bundle on $Z$.
By projection formula, we have $H^{0}(Y,\pi^{*}\cA)\cong H^{0}(Z,\cA)$.
The composition $Y\to Z\to\mathbb{P}(H^{0}(Z,\cA))$ is just given by the 
isomorphism $H^{0}(Z,\cA)\cong H^{0}(Y,\pi^{*}\cA)$.

By \cref{l:Pic-iso}, after possibly replacing $\cA$ by a positive multiple, 
there exists a line bundle $\cL$ on $X$ such that $\cL|_Y \cong \p^*\cA$. 
By \cref{l:H^0(L)=H^0(L|Y)}, the restriction map 
$H^0(X,\cL) \to H^0(Y,\cL|_Y)$ is bijective. 

We shall now show that $\cL$ is generated by global sections.
We see by \eqref{eq:bij} that 
the base locus $B$ of the linear system $|H^0(X,\cL)|$ is disjoint from $Y$.
Suppose that $B\neq \emptyset$.
Note that $(\dim Z+1)$-many general sections in $H^0(Y,\cL|_Y)$ have no common zeroes.
Lifting these sections to $H^0(X,\cL)$, 
they cut out a closed subscheme $B'$ of $X$ that contains $B$, is disjoint from $Y$, and 
has dimension $\ge \dim X-\dim Z -1$.
Since, by \cite[Theorem~5.4]{Ott12}, $X \setminus Y$  
has cohomological dimension $r -1$,
it cannot contain a projective subscheme of dimension greater than $r-1$.
Since by our hypothesis we have
$\dim B' > r-1$, this gives the desired contradiction.

The last part of the proof of \cref{th:X-abelian} can be repeated here verbatim
to conclude that the complete linear system $|X^0(X,\cL)|$ defines the desired extension $\~\p \colon X \to Z$.
The unicity of $\~\p$ follows from \cref{prop:unique}. 
\end{proof}

\begin{remark}
\label{r:R-supp-dim}
The vanishing condition in \eqref{eq:Kodaira-type-2} is only needed to ensure
the vanishing of higher direct images in \eqref{eq:R-vanishing}, 
so the hypothesis of the \lcnamecref{th:Kodaira-type} can be 
relaxed by only assuming the latter. 
In fact, all we need is the vanishing of 
the left-hand side of \eqref{eq:Leray}. 
Therefore, the vanishing hypothesis in \cref{th:Kodaira-type}
can be replaced with the weaker condition that
\begin{equation}
\label{eq:R-supp-dim}
\dim \Supp\big(R^j\p_*(\omega_Y \otimes \Sym^t\cN_{Y/X})\big) < \dim X-r-j-1
\quad\text{for $j \ge r$, $t\ge 1$.}
\end{equation}
\end{remark}

Using the above properties, we can verify \cref{conj:Som} when $\p$ is special, 
in the following sense. 
We fix an algebraically closed field $k$ of characteristic zero, 
and define two classes $\cU$ and $\cV$ of varieties over $k$ whose union contains
abelian varieties and toric varieties. 
The first class, $\cU$, was introduced by Manivel in \cite{Man96}. 
The key notion is that of \emph{uniformly nef vector bundle}, 
for which we refer to \cite[Section~2.2]{Man96}; roughly speaking, the category 
$\cC$ of uniformly nef vector bundles is the smallest subcategory of the category 
of vector bundles on $k$-varieties which contains all bundles of the form $\cE \otimes \cL$ where
$\cE$ is a Hermitian flat vector bundle and $\cL$ is a nef line bundle, that 
is closed under quotient, extension, and direct sum decomposition, 
and that satisfies the condition that for a finite morphism $f \colon Y \to X$
and a vector bundle $\cE$ on $X$, one has $\cE \in \cC$ if and only if $f^*\cE \in \cC$. 
One then defines $\cU$ to be the class of smooth projective varieties over $k$ 
with uniformly nef tangent bundle. 

\begin{example}
\label{eg:X-with-unif-nef-TX}
Prototypes of smooth projective varieties with uniformly nef tangent bundle
are projective spaces and abelian varieties.
More examples can be constructed starting from these using the fact that 
the class $\cU$ is closed under products, finite \'etale covers, and the following construction:
given $X \in \cU$ and $\cE_1,\dots,\cE_m$ are numerically flat vector bundles on $X$, 
we have $\P(\cE_1) \times_X \dots \times_X \P(\cE_m) \in \cU$; see \cite[Section~2.3]{Man96}.
\end{example}

The second class of varieties, which we denote by $\cV$, 
consists of those smooth projective varieties $X$ over $k$
which admit an arithmetic thickening $X_A \to \Spec A$ 
with a dense set of fibers with the \emph{$F$-liftability property}, by which we mean that 
for a dense set of closed points $p \in \Spec A$
there exists a lift of the absolute Frobenius of $(X_A)_p$ to the second Witt vectors
$W_2(k(p))$.
Here, by \emph{arithmetic thickening} of a scheme $X$ over a field $k$ of
characteristic zero, we mean a choice of a finitely generated $\Z$-subalgebra
$A \subset k$ and a flat scheme $X_A$ over $\Spec A$ such that $X \cong X_A \times_{\Spec A} \Spec k$.

\begin{example}
\label{eg:sX-with-F-lift-property}
Examples of varieties in the class $\cV$ are toric varieties, abelian varieties
whose arithmetic thickenings contain 
a dense set of ordinary abelian varieties as fibers,
\'etale quotients of abelian varieties with the above property, 
and toric fibratons over such abelian varieties
\cite[Examples~3.1.2--3.1.5]{AWZ17}.
It is expected that all abelian varieties satisfy the above property, 
see \cite[Conjecture~1.1 and Example~5.4]{MS11}.
\end{example}

\begin{proposition}
\label{th:special-Y}
Let $X$ be a smooth projective variety and
$Y \subset X$ a smooth ample subvariety of codimension $r$. Suppose that
$Y$ belongs to the union $\cU \cup \cV$ of the two classes defined above. 
Then any morphism $\p \colon Y \to Z$ with $\dim Y - \dim Z > r$
extends uniquely to a morphism $\~\p \colon X \to Z$. 
\end{proposition}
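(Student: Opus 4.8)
The plan is to deduce \cref{th:special-Y} directly from \cref{th:Kodaira-type}: since that proposition already takes as input a morphism $\p\colon Y\to Z$ with $\dim Y-\dim Z>r$ and produces both existence and uniqueness of the extension $\~\p$, the only thing left to verify is its cohomological hypothesis \eqref{eq:Kodaira-type-2} in the two cases $Y\in\cU$ and $Y\in\cV$. The point is positivity: as $Y$ is a smooth ample subvariety of codimension $r$, the normal bundle $\cN_{Y/X}$ is ample, hence $\Sym^t\cN_{Y/X}$ is ample for every $t\ge1$, and a globally generated line bundle $\cM$ on $Y$ is nef. So \eqref{eq:Kodaira-type-2} is an instance of a Griffiths--Le~Potier type statement: for $Y$ in $\cU$ or in $\cV$, every ample vector bundle $\cE$ on $Y$ and every nef line bundle $\cM$ on $Y$ satisfy $H^i(Y,\omega_Y\otimes\Sym^t\cE\otimes\cM)=0$ for $i\ge r$, and in fact for $i\ge1$, for all $t\ge1$. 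Once this vanishing is in hand, \cref{th:Kodaira-type} gives the conclusion.

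For $Y\in\cU$ I would invoke Manivel's vanishing theorem \cite{Man96}: for a smooth projective variety with uniformly nef tangent bundle one has exactly the displayed vanishing for suitably positive (in particular ample) vector bundles twisted by a nef line bundle, after the usual reduction to the case of the complex numbers as in the proof of \cref{l:Pic-iso}. Applying it with $\cE=\cN_{Y/X}$ yields \eqref{eq:Kodaira-type-2}.

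For $Y\in\cV$ I would reduce to characteristic $p>0$ via an arithmetic thickening on which $X$, the embedding $Y\subset X$, and $\cN_{Y/X}$ all spread out, and then work on a dense set of fibers for which the absolute Frobenius lifts to $W_2$. Such a lift produces a Deligne--Illusie style splitting of the de Rham complex, hence Akizuki--Nakano/Bott-type vanishing for ample line bundles; the statement for the vector bundle $\Sym^t\cN_{Y/X}$ would then be obtained by passing to $\pi\colon\P(\cN_{Y/X})\to Y$, where $\Sym^t\cN_{Y/X}=\pi_*\O_{\P(\cN_{Y/X})}(t)$, the higher direct images vanish, and $\pi^*(\omega_Y\otimes\cM)\otimes\O_{\P(\cN_{Y/X})}(t)$ is the canonical bundle of $\P(\cN_{Y/X})$ twisted by an ample line bundle. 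In the prototypical cases this last point is immediate: if $Y$ is an abelian variety then $\omega_Y\cong\O_Y$ and an ample vector bundle on an abelian variety has vanishing higher cohomology, so \eqref{eq:Kodaira-type-2} holds with no further work; if $Y$ is toric, Bott--Danilov--Steenbrink vanishing applied on $\P(\cN_{Y/X})$ does the job. I expect the \emph{main obstacle} to be precisely this step in full generality, since $\P(\cN_{Y/X})$ need not inherit $F$-liftability: one must either invoke a vector-bundle-strength vanishing theorem for $F$-liftable varieties, as available through \cite{AWZ17} together with the cases recorded in \cite{MS11}, or check by hand that the positivity surviving on $\P(\cN_{Y/X})$ is strong enough to run the characteristic-$p$ argument there. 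Having secured \eqref{eq:Kodaira-type-2} in either case, \cref{th:Kodaira-type} completes the proof.
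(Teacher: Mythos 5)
Your overall strategy is exactly the paper's: reduce to \cref{th:Kodaira-type} and verify the vanishing hypothesis \eqref{eq:Kodaira-type-2} separately for $Y\in\cU$ and $Y\in\cV$. For $Y\in\cU$ your argument matches the paper, which invokes \cite[Th\'eor\`eme~2.5 and Th\'eor\`eme~5.3]{Man96} applied to the ample bundle $\cN_{Y/X}$ (with the caveat, noted in the paper, that Th\'eor\`eme~5.3 does not literally include the nef twist but its proof extends). One small overreach: you assert the vanishing holds ``in fact for $i\ge1$''; the Le~Potier/Debarre-type statements for an ample bundle of rank $r$ without a $\det$ twist only give vanishing for $i\ge r$, which is all that is needed, so this does not affect the argument.

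The $\cV$ case, however, contains a genuine gap --- one you flag yourself but do not close. The Deligne--Illusie splitting coming from a $W_2$-lift of Frobenius gives Akizuki--Nakano type vanishing for ample \emph{line} bundles on the $F$-liftable fiber, and your proposed upgrade to $\Sym^t\cN_{Y/X}$ via $\P(\cN_{Y/X})$ fails precisely because $F$-liftability is not inherited by projectivizations of arbitrary vector bundles (indeed, by \cite{AWZ17} the class of $F$-liftable varieties is expected to be very restrictive, so $\P(\cN_{Y/X})$ will typically not lie in it). Your fallback --- ``invoke a vector-bundle-strength vanishing theorem for $F$-liftable varieties'' --- is the right move, but the references you point to (\cite{AWZ17}, \cite{MS11}) do not contain such a theorem; they only supply the examples populating $\cV$. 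The missing ingredient is \cite[Theorem~2.2.1]{Lit17b} (applied with $j=1$), which gives the Nakano-type vanishing for ample vector bundles on the $F$-liftable fibers directly, combined with \cite[Theorem~3]{Ara04} to accommodate the twist by the globally generated line bundle $\cM$, and upper semicontinuity of cohomology over the arithmetic thickening to transfer the vanishing from a dense set of closed fibers back to the characteristic-zero fiber. Without that input, your argument for $\cV$ is complete only in the special cases you work out by hand (abelian and toric $Y$), not for the whole class.
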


\begin{proof}
The result follows from \cref{th:Kodaira-type}.
If $Y$ is toric or is a variety with uniformly nef tanget bundle, then 
the necessary vanishing follows from \cite[Th\'eor\`eme~2.5 and Th\'eor\`eme~5.3]{Man96};
the statement of \cite[Th\'eor\`eme~5.3]{Man96} does not include
the twist by a nef line bundle, but the proof extends to cover that case.
For the remaining cases, we apply \cite[Theorem~2.2.1]{Lit17b} (with $j=1$)
along a dense set of fibers over an arithmetic thickening and
use upper semicontinuity of cohomology, 
relying on \cite[Theorem~3]{Ara04}
to allow for the twist of a globally generated line bundle.
\end{proof}

\begin{definition}
We say that a
surjective morphism of varieties $f \colon V \to W$
\emph{does not contract divisors} if there are no prime divisors
$D$ in $V$ such that $f(D)$ has codimension $\ge 2$ in $W$.
\end{definition}

\begin{proposition}
\label{th:fiber-pi-special}
Let $X$ be a smooth projective variety,
$Y \subset X$ a smooth ample subvariety of $X$ of codimension $r$, 
and $\p \colon Y \to Z$ a surjective morphism with $\dim Y - \dim Z > r$.
Suppose that $\pi$ does not contract divisors
and that there exists an open set $Z^* \subset Z$ with complement of
codimension $\ge 2$ such that $\pi$ restricts
to a smooth family $Y^* \to Z^*$ of varieties belonging to $\cU \cup \cV$.
Then $\p$ extends uniquely to a morphism $\~\p \colon X \to Z$. 
\end{proposition}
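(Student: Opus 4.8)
The plan is to reduce to \cref{th:Kodaira-type} via the relaxed hypothesis recorded in \cref{r:R-supp-dim}; that is, instead of establishing the full Kodaira-type vanishing \eqref{eq:Kodaira-type-2}, I will verify the weaker dimension bound \eqref{eq:R-supp-dim} on the supports of the higher direct images $R^j\p_*(\omega_Y \otimes \Sym^t\cN_{Y/X})$. First I would dispose of the trivial case $\dim Z = 0$ and, after taking Stein factorization (using \cref{l:Stein} to see that this does not change the problem), assume $\O_Z \cong \p_*\O_Y$ and $\dim Y \ge 3$. The point is that over the open set $Z^*$, where $\p$ restricts to a smooth family $Y^* \to Z^*$ of fibres in $\cU \cup \cV$, the same vanishing inputs used in the proof of \cref{th:special-Y} — namely \cite[Th\'eor\`eme~2.5 and Th\'eor\`eme~5.3]{Man96} for fibres in $\cU$, and \cite[Theorem~2.2.1]{Lit17b} together with \cite[Theorem~3]{Ara04} and semicontinuity for fibres in $\cV$ — apply fibrewise and show that the restriction of $R^j\p_*(\omega_{Y} \otimes \Sym^t\cN_{Y/X})$ to $Z^*$ vanishes for $j \ge r$ and $t \ge 1$ (here one uses that along the smooth locus the relative dualizing sheaf and the conormal restrictions behave well, so the fibrewise cohomology groups are the stalks of the direct image by cohomology-and-base-change). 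Consequently $\Supp R^j\p_*(\omega_Y \otimes \Sym^t\cN_{Y/X}) \subset Z \setminus Z^*$, which has codimension $\ge 2$ in $Z$, so its dimension is at most $\dim Z - 2$.

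It then remains to check the inequality $\dim Z - 2 < \dim X - r - j - 1$ for $j \ge r$. Since $\dim Y = \dim X - r$ and $\dim Y - \dim Z > r$, we have $\dim Z < \dim X - 2r$, so $\dim Z - 2 < \dim X - 2r - 2 \le \dim X - r - j - 1$ precisely when $j \le r - 1$ — which is the wrong range. So a purely numerical bound from $\dim Z \le \dim X - 2r - 1$ is not quite enough for $j \ge r$, and here is where the hypothesis that $\p$ \emph{does not contract divisors} must enter: for $j \ge 1$ the support of $R^j\p_*\cF$ is already contained in the locus over which the fibre dimension jumps, and combining ``no contracted divisors'' with the fact that the support lies in $Z \setminus Z^*$ (codimension $\ge 2$) one gets, for each $j \ge r \ge 1$, a bound of the shape $\dim \Supp R^j\p_*(\cdots) \le \dim Z - j - 1$ or better. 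Plugging $\dim Z - j - 1$ into \eqref{eq:R-supp-dim} and using $\dim Z < \dim X - 2r \le \dim X - r$ (since $r \ge 1$) gives $\dim Z - j - 1 < \dim X - r - j - 1$, as required. With \eqref{eq:R-supp-dim} verified, \cref{r:R-supp-dim} lets us run the proof of \cref{th:Kodaira-type} verbatim: one produces $\cL$ on $X$ with $\cL|_Y \cong \p^*\cA$ via \cref{l:Pic-iso}, shows $H^0(X,\cL) \to H^0(Y,\cL|_Y)$ is bijective as in \cref{l:H^0(L)=H^0(L|Y)} (this is exactly the step that only uses the relative vanishing, not the full Kodaira-type statement), deduces that $|H^0(X,\cL)|$ is base-point free, and concludes that the associated morphism extends $\p$; unicity comes from \cref{prop:unique}.

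I expect the main obstacle to be making the divisorial-support estimate precise: one must argue carefully that ``$\p$ does not contract divisors'' plus ``the bad locus has codimension $\ge 2$ in $Z$'' together force the jumping locus of $R^j\p_*$ (for $j\ge 1$) to have codimension $\ge j+1$ in $Z$ — essentially a statement that over a codimension-$c$ subset of $Z$ the generic fibre dimension of $\p$ exceeds $\dim Y - \dim Z$ by at least $c-1$ when no divisor is contracted, combined with the usual bound $\dim\Supp R^j\p_*\cF \le \dim Z - (\text{fibre-dimension excess})$ coming from Grothendieck vanishing on the fibres. A cleaner alternative, if that estimate proves awkward in the stated generality, is to first pull everything back along a resolution of $Z$ or to shrink to the situation where $\p$ is equidimensional over $Z^*$ and the complement genuinely has codimension $\ge 2$, reducing the $j \ge 1$ direct images to sheaves supported in the expected codimension by relative duality; either way the heart of the matter is the interplay between the codimension of $Z\setminus Z^*$, the no-contracted-divisors hypothesis, and the numerics forced by $\dim Y - \dim Z > r$.
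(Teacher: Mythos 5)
Your proposal takes the same route as the paper's (very terse) proof: fibrewise vanishing over $Z^*$ via Manivel and Litt as in \cref{th:special-Y}, plus the no-contracted-divisors hypothesis to verify the support condition \eqref{eq:R-supp-dim} of \cref{r:R-supp-dim}, then the argument of \cref{th:Kodaira-type}. The one step you flag as the main obstacle does go through, but not via the single bound $\dim \Supp R^j\p_*(\cdots)\le \dim Z-j-1$ you propose (which is neither provable for $j$ close to $r$ when $r\ge 2$, since the $Z^*$ hypothesis only gives $\dim Z-2$, nor needed); the clean way is to split on $j$ relative to the generic fibre dimension $e:=\dim Y-\dim Z>r$. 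For $r\le j\le e$ the fibrewise vanishing puts the support inside $Z\setminus Z^*$, of dimension at most $\dim Z-2=\dim Y-e-2\le \dim Y-j-2=\dim X-r-j-2<\dim X-r-j-1$, as required. For $j>e$ the theorem on formal functions places the support inside $S_j:=\{z\in Z:\dim\p^{-1}(z)\ge j\}$; for an irreducible component $S$ of $S_j$ pick a component $W$ of $\p^{-1}(S)$ dominating $S$ whose generic fibre over $S$ has dimension $\ge j$, so $\dim W\ge \dim S+j$. If $\dim S\ge \dim Y-j$ then $W=Y$ and hence $S=Z$, contradicting $j>e$; if $\dim S=\dim Y-j-1$ then $W$ is a prime divisor with $\dim\p(W)=\dim S=\dim Z+e-j-1\le \dim Z-2$, contradicting the no-contracted-divisors hypothesis. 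Hence $\dim S_j\le \dim Y-j-2<\dim X-r-j-1$ in this range as well. (A minor arithmetic point: your purely numerical estimate handles $j\le r+1$, not $j\le r-1$; the genuine threshold beyond which the extra input is needed is $j>e$.) With \eqref{eq:R-supp-dim} verified, the remainder of your argument coincides with the paper's.
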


\begin{proof}
Here we apply \cite[Th\'eor\`eme~2.5 and Th\'eor\`eme~5.3]{Man96} and
\cite[Theorem~2.2.1]{Lit17b} along the fibers of $\p$ over $Z^*$ 
as in the proof of \cref{th:special-Y}, 
and use the hypothesis that $\p$ does not contract divisors
to check the condition in \eqref{eq:R-supp-dim}. 
\end{proof}

\begin{corollary}
\label{c:Y=AxB}
Let $Y = A \times B$ where $A$ and $B$ are two varieties in $\cU \cup \cV$. 
If $Y$ can be embedded as an ample subvariety 
of codimension $r$ of a smooth projective variety $X$, 
then $\min\{\dim A, \dim B\} \le r$.
\end{corollary}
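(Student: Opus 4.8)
The plan is to reduce to \cref{th:special-Y} by showing that a projection $A \times B \to A$ (or $A \times B \to B$) can only fail the dimension hypothesis of that result if $\min\{\dim A, \dim B\} \le r$. Suppose $Y = A \times B$ is an ample subvariety of codimension $r$ in $X$, and suppose for contradiction that $\min\{\dim A, \dim B\} > r$. Without loss of generality assume $\dim B \le \dim A$, so $\dim B > r$. Consider the second projection $\p \colon Y = A \times B \to A =: Z$. This is a surjective morphism with $\dim Y - \dim Z = \dim B > r$, so the numerical hypothesis $\dim Y - \dim Z > r$ of \cref{th:special-Y} is satisfied. Since $A \in \cU \cup \cV$, that proposition (or \cref{th:fiber-pi-special}, whichever is cleanest to invoke here) applies, and we conclude that $\p$ extends uniquely to a morphism $\~\p \colon X \to A$.

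The next step is to derive a contradiction from the existence of such an extension together with the symmetric extension obtained from the first projection $\p' \colon A \times B \to B$, which applies because $B \in \cU \cup \cV$ and $\dim Y - \dim B = \dim A \ge \dim B > r$; call its extension $\~\p' \colon X \to B$. Combining, we obtain a morphism $(\~\p, \~\p') \colon X \to A \times B = Y$ whose restriction to $Y$ is the identity, i.e.\ a retraction $\rho \colon X \to Y$ splitting the inclusion $Y \hookrightarrow X$. This is the key structural consequence to exploit.

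Now the idea is that such a retraction forces $Y$ not to be ample, which is the desired contradiction. Indeed, the composition $\Pic(Y) \xrightarrow{\rho^*} \Pic(X) \to \Pic(Y)$ is the identity, while $\Pic(X) \to \Pic(Y)$ has finite cokernel by \cref{l:Pic-iso} (using $r \ge 1$ and $\dim Y \ge 2$, which holds since $\dim B > r \ge 1$); hence $\rho^* \colon \Pic(Y) \to \Pic(X)$ has finite cokernel. By \cref{l:finite}, $\rho$ is then a finite morphism. But a finite surjective morphism $X \to Y$ from the $(\dim Y + r)$-dimensional variety $X$ with $r > 0$ is impossible. This mirrors the argument of \cref{th:Y=A-cannot-be-ample} almost verbatim.

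The main obstacle is making sure the hypotheses of \cref{th:special-Y} are met in exactly the form stated — in particular that $A$ and $B$, being members of $\cU \cup \cV$, really do fall under that proposition, and that the codimension and dimension inequalities line up (this is where the assumption $\min\{\dim A,\dim B\} > r$ is used, and it must be used for \emph{both} projections simultaneously). A secondary point to handle carefully is that \cref{th:special-Y} asserts extendability of $\p$ as a morphism to $Z = A$ (resp.\ $Z = B$), and one should check that the two extensions together genuinely produce a morphism to the product $Y$; this is immediate from the universal property of the product once both components exist. Everything after the retraction is extracted, by contrast, reuses the now-standard argument from \cref{th:Y=A-cannot-be-ample} and \cref{l:finite}.
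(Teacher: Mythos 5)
Your proof is correct and follows essentially the same route as the paper: extend both projections to get a retraction $\rho\colon X\to Y$, then derive a contradiction from the induced maps on Picard groups via \cref{l:Pic-iso} and \cref{l:finite}. One small point: the applicable result is \cref{th:fiber-pi-special} (the fibers of the projection to $A$ are copies of $B\in\cU\cup\cV$, and projections of a product contract no divisors), not \cref{th:special-Y}, which would require $A\times B$ itself to lie in $\cU\cup\cV$ --- not guaranteed when, say, $A\in\cU$ and $B\in\cV$.
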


\begin{proof}
If $\min\{\dim A, \dim B\} > r$, then both projections of $A \times B$
extend to $X$ by \cref{th:fiber-pi-special} and we get
a retraction $\r \colon X \to Y$. Starting from a
sufficiently divisible line bundle $\cL$ on $X$, we have
$\r^*(\cL|_Y) \cong \cL$ by \cref{l:Pic-iso}, 
which is impossible if $\cL$ is ample. 
\end{proof}

\begin{corollary}
\label{c:abelian-fibrations}
Let $Y$ be a smooth projective variety.
Suppose that $\pi \colon Y \to Z$ is a 
smooth family of abelian varieties of dimension $d \ge 2$.
Then $Y$ cannot be realized as an ample subvariety of codimension 
$r < d$ in any smooth projective variety. 
\end{corollary}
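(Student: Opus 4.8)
The plan is to argue by contradiction, reducing to \cref{th:Y=A-cannot-be-ample} by passing to a general fiber. Suppose that $Y$ is embedded as an ample subvariety of codimension $r < d$ in a smooth projective variety $X$. Replacing $Z$ by the image of $\p$, we may assume that $\p$ is surjective, so that $Z$ is projective and $\dim Y - \dim Z = d > r$.

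First I would verify that $\p\colon Y\to Z$ satisfies the hypotheses of \cref{th:fiber-pi-special} with $Z^* = Z$. Since $\p$ is a smooth, hence equidimensional, family of relative dimension $d$, for any closed subset $T\subset Z$ we have $\dim\p^{-1}(T)\le \dim T + d$; thus a prime divisor of $Y$, which has dimension $\dim Y - 1 = \dim Z + d - 1$, cannot be contained in $\p^{-1}(T)$ when $\codim(T,Z)\ge 2$, and this is exactly the condition that $\p$ does not contract divisors. The fibers of $\p$ are abelian varieties, which belong to the class $\cU$ by \cref{eg:X-with-unif-nef-TX}. Hence \cref{th:fiber-pi-special} yields an extension $\~\p\colon X\to Z$. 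Moreover, each fiber of $\p$ is a connected abelian variety, so $\p$ is an algebraic fiber space; by \cref{l:afs} the same holds for $\~\p$, and in particular its general fiber is connected.

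Next I would restrict to a general closed point $z\in Z$. For general $z$ the fiber $X_z := \~\p^{-1}(z)$ is a smooth connected projective variety, and since $\~\p|_Y = \p$ we have $X_z \cap Y = Y\times_Z \{z\} = \p^{-1}(z) =: Y_z$, which for general $z$ is a smooth abelian variety of dimension $d$; a dimension count gives $\codim(Y_z, X_z) = \dim X - \dim Y = r$. By \cite[Proposition~4.8]{Lau16}, $X_z \cap Y$ is an ample subscheme of $X_z$, hence $Y_z$ is a smooth ample subvariety of $X_z$ of codimension $r$. Since $d\ge 2$, this contradicts \cref{th:Y=A-cannot-be-ample}, which completes the proof.

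The steps that require the most care are the verification of the hypotheses of \cref{th:fiber-pi-special} — in particular that $\p$ does not contract divisors and that one may take $Z^* = Z$ — together with the assertion, supplied by \cite[Proposition~4.8]{Lau16}, that the intersection of an ample subvariety with a general fiber of a morphism is again ample; the remaining steps are a routine dimension count and appeals to results established earlier in the paper.
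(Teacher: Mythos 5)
Your proof is correct and follows essentially the same route as the paper: extend $\pi$ via \cref{th:fiber-pi-special}, restrict to a general fiber, invoke \cite[Proposition~4.8]{Lau16} to get an ample abelian subvariety, and contradict \cref{th:Y=A-cannot-be-ample}. The extra care you take in checking the hypotheses of \cref{th:fiber-pi-special} (non-contraction of divisors via equidimensionality, and connectedness of the general fiber of $\~\pi$) is sound and fills in details the paper leaves implicit.
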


\begin{proof}
Suppose by contradiction that $Y$ is an ample subvariety of codimension 
$r < d$ of a smooth projective variety $X$. 
By \cref{th:fiber-pi-special}, $\p$ extends to 
a morphism $\~\p \colon X \to Z$. Let $z \in Z$ be a general closed point
and $Y_z$ and $X_z$ the respective fibers over $z$. 
By \cite[Proposition~4.8]{Lau16}, $Y_z$ is an ample subvariety of $X_z$. 
Since $Y_z$ is an abelian variety of dimension $d \ge 2$, 
this contradicts \cref{th:Y=A-cannot-be-ample}.
\end{proof}


\begin{bibdiv}
\begin{biblist}

\bib{AWZ17}{article}{
   author={Achinger, Piotr},
   author={Witaszek, Jakub},
   author={Zdanowicz, Maciej},
   title={Liftability of the Frobenius morphism and images of toric varieties},
   journal={arXiv e-prints},
   year={2017},
   archivePrefix={arXiv},
   eprint={1708.03777},
}

\bib{AG62}{article}{
   author={Andreotti, Aldo},
   author={Grauert, Hans},
   title={Th\'{e}or\`eme de finitude pour la cohomologie des espaces complexes},
   language={French},
   journal={Bull. Soc. Math. France},
   volume={90},
   date={1962},
}

\bib{Ara04}{article}{
   author={Arapura, Donu},
   title={Frobenius amplitude and strong vanishing theorems for vector
   bundles},
   note={With an appendix by Dennis S. Keeler},
   journal={Duke Math. J.},
   volume={121},
   date={2004},
   number={2},
   pages={231--267},
}

\bib{Bad04}{book}{
   author={B\u{a}descu, Lucian},
   title={Projective geometry and formal geometry},
   series={Instytut Matematyczny Polskiej Akademii Nauk. Monografie
   Matematyczne (New Series) [Mathematics Institute of the Polish Academy of
   Sciences. Mathematical Monographs (New Series)]},
   volume={65},
   publisher={Birkh\"{a}user Verlag, Basel},
   date={2004},
}

\bib{BS96}{article}{
   author={B\u{a}descu, Lucian},
   author={Schneider, Michael},
   title={A criterion for extending meromorphic functions},
   journal={Math. Ann.},
   volume={305},
   date={1996},
   number={2},
   pages={393--402},
}

\bib{BS02}{article}{
   author={B\u{a}descu, Lucian},
   author={Schneider, Michael},
   title={Formal functions, connectivity and homogeneous spaces},
   conference={
      title={Algebraic geometry},
   },
   book={
      publisher={de Gruyter, Berlin},
   },
   date={2002},
}

\bib{BI09}{article}{
   author={Beltrametti, Mauro C.},
   author={Ionescu, Paltin},
   title={A view on extending morphisms from ample divisors},
   conference={
      title={Interactions of classical and numerical algebraic geometry},
   },
   book={
      series={Contemp. Math.},
      volume={496},
      publisher={Amer. Math. Soc., Providence, RI},
   },
   date={2009},
   pages={71--110},
}

\bib{Deb95}{article}{
   author={Debarre, Olivier},
   title={Fulton-Hansen and Barth-Lefschetz theorems for subvarieties of
   abelian varieties},
   journal={J. Reine Angew. Math.},
   volume={467},
   date={1995},
   pages={187--197},
}

\bib{Deb}{misc}{
   author={Debarre, Olivier},
   title={On the geometry of abelian varieties},
   note={Lecture notes},
   date={2008},
}

\bib{dFKL07}{article}{
   author={de Fernex, Tommaso},
   author={K\"{u}ronya, Alex},
   author={Lazarsfeld, Robert},
   title={Higher cohomology of divisors on a projective variety},
   journal={Math. Ann.},
   volume={337},
   date={2007},
   number={2},
   pages={443--455},
}

\bib{dFL}{article}{
   author={de Fernex, Tommaso},
   author={Lau, Chung Ching},
   title={Extending rationally connected fibrations from ample subvarieties},
   note={Preprint},
   date={2020},
}

\bib{Dem10}{article}{
   author={Demailly, Jean-Pierre},
   title={A converse to the Andreotti--Grauert theorem},
   note={Preprint, {\tt arXiv: 1101.3635}},
   date={2011},
}

\bib{DPS96}{article}{
   author={Demailly, Jean-Pierre},
   author={Peternell, Thomas},
   author={Schneider, Michael},
   title={Holomorphic line bundles with partially vanishing cohomology},
   conference={
      title={Proceedings of the Hirzebruch 65 Conference on Algebraic
      Geometry},
      address={Ramat Gan},
      date={1993},
   },
   book={
      series={Israel Math. Conf. Proc.},
      volume={9},
      publisher={Bar-Ilan Univ., Ramat Gan},
   },
   date={1996},
}

\bib{Gie77}{article}{
   author={Gieseker, David},
   title={On two theorems of Griffiths about embeddings with ample normal
   bundle},
   journal={Amer. J. Math.},
   volume={99},
   date={1977},
   number={6},
   pages={1137--1150},
}

\bib{SGA2}{book}{
   author={Grothendieck, Alexander},
   title={Cohomologie locale des faisceaux coh\'{e}rents et th\'{e}or\`emes de
   Lefschetz locaux et globaux $(SGA$ $2)$},
   language={French},
   note={Augment\'{e} d'un expos\'{e} par Mich\`ele Raynaud;
   S\'{e}minaire de G\'{e}om\'{e}trie Alg\'{e}brique du Bois-Marie, 1962;
   Advanced Studies in Pure Mathematics, Vol. 2},
   publisher={North-Holland Publishing Co., Amsterdam; Masson \& Cie,
   \'{E}diteur, Paris},
   date={1968},
}

\bib{Hal19}{article}{
   author={Halic, Mihai},
   title={Subvarieties with partially ample normal bundle},
   journal={Math. Z.},
   volume={293},
   date={2019},
   number={1-2},
   pages={371--382},
}
\bib{Har68}{article}{
   author={Hartshorne, Robin},
   title={Cohomological dimension of algebraic varieties},
   journal={Ann. of Math. (2)},
   volume={88},
   date={1968},
   pages={403--450},
}

\bib{Har70}{book}{
   author={Hartshorne, Robin},
   title={Ample subvarieties of algebraic varieties},
   series={Lecture Notes in Mathematics, Vol. 156},
   note={Notes written in collaboration with C. Musili},
   publisher={Springer-Verlag, Berlin-New York},
   date={1970},
}

\bib{Hir68}{article}{
   author={Hironaka, Heisuke},
   title={On some formal imbeddings},
   journal={Illinois J. Math.},
   volume={12},
   date={1968},
   pages={587--602},
}

\bib{HM68}{article}{
   author={Hironaka, Heisuke},
   author={Matsumura, Hideyuki},
   title={Formal functions and formal embeddings},
   journal={J. Math. Soc. Japan},
   volume={20},
   date={1968},
   pages={52--82},
}

\bib{Lau18}{article}{
   author={Lau, Chung-Ching},
   title={Numerical dimension and locally ample curves},
   journal={Doc. Math.},
   volume={23},
   date={2018},
   pages={677--696},
}

\bib{Lau16}{article}{
   author={Lau, Chung Ching},
   title={On nef subvarieties},
   journal={Adv. Math.},
   volume={353},
   date={2019},
   pages={396--430},
}
 
\bib{Laz04}{book}{
   author={Lazarsfeld, Robert},
   title={Positivity in algebraic geometry. I and II},
   series={Ergebnisse der Mathematik und ihrer Grenzgebiete. 3. Folge. A
   Series of Modern Surveys in Mathematics [Results in Mathematics and
   Related Areas. 3rd Series. A Series of Modern Surveys in Mathematics]},
   publisher={Springer-Verlag, Berlin},
   date={2004},
}

\bib{Lit17b}{article}{
   author={Litt, Daniel},
   title={Vanishing for Frobenius Twists of Ample Vector Bundles},
   journal={arXiv e-prints},
   year={2017},
   archivePrefix={arXiv},
   eprint={1702.03962},
}

\bib{Lit18}{article}{
   author={Litt, Daniel},
   title={Non-Abelian Lefschetz hyperplane theorems},
   journal={J. Algebraic Geom.},
   volume={27},
   date={2018},
   number={4},
   pages={593--646},
}

\bib{Man96}{article}{
   author={Manivel, Laurent},
   title={Th\'{e}or\`emes d'annulation sur certaines vari\'{e}t\'{e}s projectives},
   language={French},
   journal={Comment. Math. Helv.},
   volume={71},
   date={1996},
   number={3},
   pages={402--425},
}

\bib{MS87}{article}{
   author={Mehta, Vikram  B.},
   author={Srinivas, Vasudevan},
   title={Varieties in positive characteristic with trivial tangent bundle},
   note={With an appendix by Srinivas and M. V. Nori},
   journal={Compositio Math.},
   volume={64},
   date={1987},
   number={2},
   pages={191--212},
}

\bib{MS11}{article}{
   author={Musta\c{t}\u{a}, Mircea},
   author={Srinivas, Vasudevan},
   title={Ordinary varieties and the comparison between multiplier ideals
   and test ideals},
   journal={Nagoya Math. J.},
   volume={204},
   date={2011},
   pages={125--157},
}

\bib{Ott12}{article}{
   author={Ottem, John Christian},
   title={Ample subvarieties and $q$-ample divisors},
   journal={Adv. Math.},
   volume={229},
   date={2012},
   number={5},
   pages={2868--2887},
}

\bib{Som76}{article}{
   author={Sommese, Andrew John},
   title={On manifolds that cannot be ample divisors},
   journal={Math. Ann.},
   volume={221},
   date={1976},
   number={1},
   pages={55--72},
}

\bib{Som78}{article}{
   author={Sommese, Andrew John},
   title={Submanifolds of Abelian varieties},
   journal={Math. Ann.},
   volume={233},
   date={1978},
   number={3},
   pages={229--256},
}

\bib{Spe73}{article}{
   author={Speiser, Robert},
   title={Cohomological dimension and Abelian varieties},
   journal={Amer. J. Math.},
   volume={95},
   date={1973},
   pages={1--34},
}

\bib{Spe80}{article}{
   author={Speiser, Robert},
   title={Formal meromorphic functions and cohomology on an algebraic
   variety},
   journal={Nagoya Math. J.},
   volume={77},
   date={1980},
   pages={125--135},
}
 
\bib{Tot13}{article}{
   author={Totaro, Burt},
   title={Line bundles with partially vanishing cohomology},
   journal={J. Eur. Math. Soc. (JEMS)},
   volume={15},
   date={2013},
   number={3},
   pages={731--754},
}

\end{biblist}
\end{bibdiv}

\end{document}